\documentclass[11pt,a4paper,reqno]{amsart}

\usepackage[margin=30mm,headheight=14pt,headsep=16pt]{geometry}
\usepackage[T1]{fontenc}
\usepackage{lmodern}
\usepackage{microtype}

\allowdisplaybreaks[1]
\usepackage{amsmath,amsthm,amssymb}

\usepackage{enumitem}
\setlist[enumerate]{itemsep=3pt,topsep=5pt,label=\emph{(\alph*)}}
\setlist[enumerate,2]{itemsep=2pt,label=\emph{(\roman*)}}

\usepackage{float}
\usepackage[section]{placeins}
\usepackage{tikz}

\usepackage{xcolor}
\definecolor{mydarkblue}{RGB}{35,64,90}

\usepackage[hypertexnames=false]{hyperref}
\usepackage{bookmark}
\hypersetup{
  pdftitle={Graded Representation Theory of Equivariant Neural Networks},
  pdfauthor={Mani Shayestehfar},
  colorlinks=true,
  linkcolor=mydarkblue,
  citecolor=mydarkblue,
  urlcolor=mydarkblue,
  linktocpage=true
}

\usepackage[nameinlink,noabbrev]{cleveref}
\crefname{section}{Section}{Sections}
\crefname{subsection}{Section}{Sections}
\crefname{appendix}{Appendix}{Appendices}

\usepackage{aliascnt}

\newcommand{\sharedtheorem}[3]{%
  \newaliascnt{#1}{theorem}%
  \newtheorem{#1}[#1]{#2}%
  \aliascntresetthe{#1}%
  \crefname{#1}{#2}{#3}%
}

\theoremstyle{plain}
\newtheorem{theorem}{Theorem}[section]
\crefname{theorem}{Theorem}{Theorems}
\sharedtheorem{proposition}{Proposition}{Propositions}
\sharedtheorem{lemma}{Lemma}{Lemmas}
\sharedtheorem{corollary}{Corollary}{Corollaries}

\theoremstyle{definition}
\sharedtheorem{definition}{Definition}{Definitions}
\sharedtheorem{example}{Example}{Examples}

\theoremstyle{remark}
\sharedtheorem{remark}{Remark}{Remarks}
\sharedtheorem{question}{Question}{Questions}

\DeclareMathOperator{\Sym}{Sym}
\DeclareMathOperator{\Hom}{Hom}
\DeclareMathOperator{\End}{End}
\DeclareMathOperator{\supp}{supp}
\DeclareMathOperator{\tr}{tr}
\DeclareMathOperator{\Ind}{Ind}
\DeclareMathOperator{\Aut}{Aut}
\DeclareMathOperator{\Cyc}{Cyc}
\DeclareMathOperator{\MSet}{MSet}
\DeclareMathOperator{\Poly}{Poly}
\DeclareMathOperator{\Span}{span}

\newcommand{\E}{\mathbb E}
\newcommand{\R}{\mathbb R}
\newcommand{\C}{\mathbb C}
\newcommand{\N}{\mathbb N}
\newcommand{\Z}{\mathbb Z}

\newcommand{\cH}{\mathcal H}
\newcommand{\cB}{\mathcal B}
\newcommand{\cO}{\mathcal O}
\newcommand{\one}{\mathbf 1}

\newcommand{\PL}{\mathrm{PL}}
\newcommand{\He}{\mathrm{He}}
\newcommand{\whbigoplus}{\widehat{\bigoplus}}
\newcommand{\ip}[2]{\left\langle #1,#2\right\rangle}
\newcommand{\norm}[1]{\left\lVert #1\right\rVert}
\newcommand{\abs}[1]{\left\lvert #1\right\rvert}

\newcommand{\Vnr}{V_{n,r}}
\newcommand{\Vns}{V_{n,s}}
\newcommand{\Xnr}{X_{n,r}}
\newcommand{\Xns}{X_{n,s}}

\title[Graded Representation Theory of Equivariant Neural Networks]{Graded Representation Theory of Equivariant Neural Networks}

\author{Mani Shayestehfar}
\address{School of Mathematics and Statistics, University of Sydney, NSW 2006, Australia}
\email{mani.shayestehfar@sydney.edu.au}

\urladdr{https://maani.info}

\subjclass[2020]{Primary: 20C15, 68T07; Secondary: 05E10, 13A50.}
\keywords{Equivariant neural networks, representation theory, polynomial chaos,
symmetric powers, permutation representations, piecewise-linear maps}
\date{}

\begin{document}

\begin{abstract}
Nonlinear activations can create equivariant interactions between
irreducible representations that linear maps cannot. We use the
Gaussian degree decomposition to extend ordinary polynomial degree to
such nonlinear maps, and prove that for a fixed coordinatewise
equivariant layer each degree factors into a polynomial determined by
the linear maps and a scalar determined by the activation. This
separates three distinct obstructions, coming from symmetry,
coordinates, and activation.
\end{abstract}

\maketitle
\tableofcontents

\section*{Introduction}\label{sec:intro}

Neural networks alternate linear maps with nonlinear activation functions. When the data carry symmetries, one often asks the network to respect them: rotating, translating or relabelling an input should have a prescribed effect on the output. Mathematically, the feature spaces are then representations of a group $G$, and the linear parts of the network are $G$-equivariant maps. This is precisely the viewpoint promoted by geometric deep learning \cite{gdl}. 

Representation theory is very effective for understanding the linear part. A finite-dimensional representation can be decomposed into irreducible pieces, and an equivariant linear map cannot mix two non-isomorphic irreducibles. In a suitable basis, this makes the linear maps much simpler.

The activation function changes the picture. It is nonlinear and is usually applied coordinatewise, so it can create interactions between irreducible pieces which are completely invisible to linear representation theory. The simplest example already occurs for the symmetric group $S_3$.

Let $S_3$ permute the coordinates of
$$
\R^3=L_0\oplus L_1,
\qquad
L_0=\Span\{(1,1,1)\},
\qquad
L_1=\{x:x_1+x_2+x_3=0\}.
$$
The first summand is the trivial representation and the second is the two-dimensional standard representation. There is no nonzero equivariant linear map from $L_1$ to $L_0$. Coordinatewise squaring, however, gives
\begin{equation}\label{eq:intro-square}
p_0(x_1^2,x_2^2,x_3^2)
=
\frac{\norm{x}^2}{3}(1,1,1),
\qquad x\in L_1,
\end{equation}
where $p_0$ is orthogonal projection onto $L_0$. Thus a coordinatewise nonlinearity creates a quadratic interaction between two representation types which no linear equivariant map can connect.

This is the starting point of the paper. We ask not merely whether a nonlinear equivariant interaction exists, but at which degrees it occurs and whether a given neural network layer actually produces it.

\subsection*{From existence to degree}

Gibson, Tubbenhauer and Williamson \cite{GTW2025} (GTW) study the first, degree-free version of this question. For irreducible real representations $L$ and $K$ of a finite group, they show that a nonzero equivariant piecewise-linear map $L\to K$ exists precisely when
\begin{equation}\label{eq:gtw-intro}
\ker L\subseteq\ker K.
\end{equation}
Here piecewise linear means continuous and affine on each of finitely many polyhedral pieces. This is already very different from the linear theory: nonlinear equivariant maps can connect many different irreducible representations.

Condition \eqref{eq:gtw-intro}, however, does not say how the interaction occurs. For example, the interaction in \eqref{eq:intro-square} first appears in degree two. More generally, one would like to distinguish such interactions by the degree in which they occur. One would also like to distinguish two further questions: symmetry may permit a degree without the chosen coordinate system producing it, and the coordinate system may produce it while the chosen activation suppresses it.

For polynomial maps there is an obvious notion of degree: the degree-$k$
equivariant maps $L\to K$ are
$$
\Hom_G(\Sym^k L,K).
$$
Activations such as ReLU are not polynomial, so we need a way to extend
this notion of degree. We put the standard Gaussian measure on $L$.
Just as monomials organise ordinary polynomials by degree, Hermite
polynomials form an orthogonal basis for functions with respect to the
Gaussian measure. Grouping the multivariate Hermite polynomials by their
total degree gives an orthogonal decomposition, called the \emph{polynomial chaos}
decomposition.

Thus polynomial chaos extends the usual polynomial grading to non-polynomial activations such as ReLU. We write $\Pi_k$ for the orthogonal projection onto the
degree-$k$ part.

This grading is classical. Its usefulness here is that it gives a common language for polynomial and non-polynomial activations. A ReLU layer, for example, has infinitely many Hermite components, and we can ask exactly which representation-theoretic interactions occur in each one.

There is already a useful answer before considering a particular architecture. If $L\neq0$, then an allowed degree can never occur in isolation:
$$
\Hom_G(\Sym^kL,K)\neq0
\quad\Longrightarrow\quad
\Hom_G(\Sym^{k+2m}L,K)\neq0
\qquad(m\geq0).
$$
Thus the possible degrees consist of at most two tails, one even and one odd. For finite groups we also refine \eqref{eq:gtw-intro}: writing $N=\ker L$, some polynomial degree occurs if and only if $K^N\neq0$ (where $K^N$ denotes the $N$-fixed subspace of $K$), and the least such degree is at most $|G/N|-1$. For irreducible $K$, this recovers the GTW kernel condition and additionally locates the interaction in degree.

\subsection*{What does one layer produce?}

Knowing that a degree-$k$ equivariant map $L\to K$ exists does not mean
that a given neural network layer produces one. We now make this
distinction precise.

Let $H=\R^{\cB}$ be a representation with a basis $\cB$ which is permuted by $G$.
A typical equivariant neural network layer has the form
$$
L \xrightarrow{A} H
  \xrightarrow{\sigma_{\cB}} H
  \xrightarrow{R} K.
$$
The first map $A$ expresses the input in the hidden coordinates $\cB$,
and the last map $R$ sends the hidden features to the output space $K$.
Between them we apply the same scalar activation
$\sigma:\R\to\R$ separately to each hidden coordinate: if
$$
h=\sum_{b\in\cB} h_b\,b, \qquad \text{then} \qquad \sigma_{\cB}(h)=\sum_{b\in\cB}\sigma(h_b)\,b.
$$
Since $G$ only permutes the basis vectors in $\cB$, applying the same
function to every coordinate commutes with the $G$-action. Hence
$\sigma_{\cB}$ is equivariant, and so is the whole layer whenever
$A$ and $R$ are equivariant linear maps.

There are two separate reasons why a degree permitted by representation
theory may fail to appear in this layer:
\begin{enumerate}
    \item the linear maps $A$ and
$R$ may not produce the relevant polynomial interaction, or
\item the
activation $\sigma$ may have no component in that degree.
\end{enumerate}
The main calculation separates these two effects. Let $s^2$ be
the common variance of the coordinates of $AX$ when $X$ is a standard
Gaussian vector in $L$. Let $a_k(\sigma;s)$ be the coefficient of the
$k$th Hermite polynomial in the expansion of the one-variable function
$z\mapsto \sigma(sz)$. Finally, for an ordinary input $x\in L$, set
$$
Q_k(x)=R\bigl((Ax)^{\odot k}\bigr),
$$
where $(Ax)^{\odot k}$ is the coordinatewise $k$th power. Then
\begin{equation}\label{eq:intro-main}
\Pi_k\bigl[R\sigma_{\cB}(Ax)\bigr]
=
\frac{a_k(\sigma;s)}{s^k}\,\Pi_k Q_k.
\end{equation}

This gives three separate questions:
\begin{enumerate}
    \item \label{question:symmetry}
    Does symmetry allow a degree-$k$ equivariant map $L\to K$?
    \item \label{question:linear-maps}
    Do the chosen linear maps $A$ and $R$ actually produce one?
    \item \label{question:activation}
    Does the activation $\sigma$ retain degree $k$?
\end{enumerate}
The first asks whether $\Hom_G(\Sym^k L,K)\neq 0$.
The second asks whether $Q_k$ is nonzero, and the third whether $a_k(\sigma;s)$ is nonzero.

The second question also has an explicit answer. Put $C=AA^*$, written
in the coordinate basis of $\R^{\cB}$ (with  $G$ acting transitively). Then
\begin{equation}\label{eq:intro-norm}
\norm{\Pi_kQ_k}_{L^2}^2
=
k!\,\tr\bigl(RC^{\circ k}R^*\bigr),
\end{equation}
where $C^{\circ k}$ is obtained by taking the $k$th power of each entry
of $C$. Hence
$$
Q_k\neq0
\quad\Longleftrightarrow\quad
\tr\bigl(RC^{\circ k}R^*\bigr)>0.
$$

For $S_4$ we give an
example where a quadratic equivariant map exists, but the chosen
coordinatewise layer produces no quadratic term at all, and changing the
activation cannot fix this. For $C_8$ the opposite phenomenon occurs:
the layer does produce the first permitted polynomial interaction, but
a positively homogeneous activation such as ReLU removes it. Adding a
bias restores it.

Thus a nonlinear interaction can fail for three different reasons: it
may be forbidden by symmetry, missed by the chosen linear maps, or
removed by the activation. Distinguishing these three mechanisms is
the main point of the paper.

\subsection*{Consequences for architectures}

The same calculation answers two practical questions. For any fixed
collection of layers, an explicit Gram matrix determines exactly which
degree-$k$ equivariant maps they can produce. Moreover, shifted ReLUs
can realise arbitrary Hermite coefficients up to any prescribed finite
degree. Using the regular representation then gives all equivariant
polynomial directions, although not efficiently.

For ordinary ReLU there is a simple restriction: all odd Hermite degrees
above one vanish. A shared bias removes this restriction generically.
These degree-resolved interactions can also be used to design
representation-aware architectures.
In a companion paper by the author, the resulting source-to-target
decomposition is used to build a higher-order graph network on unordered
triples, and experiments show an improvement over both a standard GNN
baseline and a parameter-matched control with the full linear
$S_n$-equivariant mixing space \cite{CompanionPaper}. This addresses a question
left open in \cite{GTW2025}: whether organising nonlinear equivariant
networks by irreducible representation theory is useful in practice.

\subsection*{Subset features}

The final part of the paper treats the action of $S_n$ on $r$-element
subsets. A degree-$d$ monomial in these variables corresponds to an $r$-uniform
multihypergraph with $d$ edges. If the output is indexed by
$s$-subsets, one simply marks an $s$-subset of vertices. Equivariant
polynomial maps are therefore indexed by these marked multihypergraphs,
up to relabelling.

This gives an explicit basis whose types stabilise at the sharp bound
$$
n=rd+s.
$$
For quadratic edge-to-vertex maps there are seven stable types, and all
seven can be evaluated in $O(n^2)$ operations, or $O(n+m)$ for $m$
nonzero edges.

\subsection*{Relation to existing work and scope}
\label{sec:relation}

Our starting point is the piecewise-linear representation theory of \cite[Theorem~2H.3]{GTW2025}. For irreducible real representations of a finite group, they characterise the existence of a nonzero equivariant piecewise-linear map by inclusion of the representation kernels. We refine this existence question by recording the polynomial degrees in which an interaction can occur. We then ask
which of these interactions are produced by a specified
coordinatewise layer and activation.

The compact-group structure theorem of
\cite[Theorem~2.5 and Remark~2.6]{Aiello2026}
gives a broader context for this finite-group analysis.
Let $\Gamma$ be a compact group with identity component
$\Gamma^\circ$, and equip its finite-dimensional real
representations $L$ and $K$ with invariant inner products.
Every equivariant piecewise-linear map $F:L\to K$ has the form
$$
F(v+w)=\psi(v)+Tw,
\qquad
v\in L^{\Gamma^\circ},
\quad
w\in (L^{\Gamma^\circ})^\perp,
$$
where $\psi:L^{\Gamma^\circ}\to K^{\Gamma^\circ}$ is
equivariant and piecewise linear, and $T:(L^{\Gamma^\circ})^\perp \to (K^{\Gamma^\circ})^\perp$ is an equivariant linear map.
The joint action on the two fixed subspaces has finite image. Consequently, our finite-group degree criteria apply to this remaining nonlinear problem, and our layer criteria can test chosen coordinatewise constructions for $\psi$.
For connected groups, nonlinear dependence is confined to the trivial subrepresentations. This restriction also explains why polynomial equivariants alone cannot characterise piecewise-linear existence for general compact groups.

At the level of layer construction, \cite{Pacini2024} classify the combinations of representations, coordinates and pointwise activations that give an equivariant activation map. We work with hidden permutation representations, where shared coordinatewise activations are equivariant, and analyse the interactions selected by the linear lift, activation and readout.

The analytic tools are classical. Gaussian chaos, Wick ordering and their interpretation through symmetric tensors are developed in \cite[Chapters~2--4]{Janson1997}. Hermite expansions and the associated covariance identities also underlie the dual-activation framework of \cite{Daniely2016}. We use these identities to separate the polynomial selected by an equivariant layer from the scalar coefficient supplied by its activation. This gives explicit Gram criteria for the vanishing of a degree component and for the degree subspace spanned by a specified family of layers.

Our spanning construction also connects with equivariant
approximation theory. \cite{Ravanbakhsh2020}
establishes universality using regular hidden actions, while
\cite{Yarotsky2022} obtains invariant and equivariant approximation through group averaging. Here regular
representations provide an exact spanning construction in each polynomial degree. Matching finitely many Hermite components does not, by itself, control the remaining approximation error.

For subset representations, the marked-hypergraph basis follows the orbit-sum principle used by \cite{Puny2023} to construct multigraph-indexed bases of equivariant graph polynomials. We formulate this construction for $r$-subset inputs and $s$-subset outputs, and determine its sharp stable range. The distinction between parameter sharing across dimensions and compatibility with embeddings already appears in \cite{LevinDiaz2024}; our marked-orbit criterion
makes that compatibility explicit for these polynomial maps.

The Gaussian measure here is only a reference
measure for the grading, not an assumption about trained features, and the resulting one-layer obstructions need not persist under arbitrary composition.

\subsection*{Organisation}\label{sec:organisation}

\Cref{sec:prelim,sec:gaussian} introduce polynomial and Gaussian degree and prove the finite-group degree-detection results. \Cref{sec:coordinatewise} studies coordinatewise layers and proves \eqref{eq:intro-main} and \eqref{eq:intro-norm}. \Cref{sec:activation-spectra} determines the relevant activation spectra, and \cref{sec:architecture} gives the resulting architecture-level spanning statements. \Cref{sec:subsets} treats polynomial layers on subset features. Supplementary proofs and representation formulae are collected in the appendices.

\subsection*{Acknowledgement}
This research was completed during a master's project supervised by Dani Tubbenhauer, whom I thank for many discussions and for guidance throughout.

\subsection*{Use of AI}
The initial backbone of the work was developed independently. Discussions with Claude Fable pointed towards sources such as \cite{Janson1997}, where the connection between $L^2(\gamma_L)$ and $\Sym^kL$ was found. ChatGPT Sol assisted with polishing arguments and proofs, and with the literature review. After using each tool, the author reviewed and edited the content as needed, and takes full responsibility for the content of the work.

\vspace{0.7cm}
\section{Preliminaries}\label{sec:prelim}

All representations are finite-dimensional and real unless otherwise stated, and are equipped with fixed invariant inner products. The acting group $G$ is finite unless compact groups are explicitly allowed. For a representation $L$, we write $\ker L$ for the kernel of the action; it is a normal subgroup of $G$.

\subsection{Polynomial maps and symmetric powers}
Polynomial degree is the basic grading used throughout the paper. We begin by recalling the standard identification between homogeneous polynomial maps and maps out of symmetric powers.

For a real representation $L$ and $k\ge1$, define
$$
\Sym^k L
=
L^{\otimes k}\Big/
\Span\{t-\pi t:\ t\in L^{\otimes k},\ \pi\in S_k\},
\qquad
\Sym^0L=\R,
$$
where $S_k$ acts by permuting tensor factors. The diagonal action of $G$ on $L^{\otimes k}$ commutes with this action, and therefore descends to $\Sym^kL$.

For $x\in L$, we write $x^k$ for the class of $x^{\otimes k}$ in $\Sym^kL$. Pure powers span $\Sym^kL$, and the induced inner product is characterised by
$$
\ip{x^k}{y^k}=\ip{x}{y}^k.
$$
To avoid confusion, coordinatewise powers in a based permutation representation will instead be denoted by $x^{\odot k}$.

For real representations $L$ and $K$, let $\Poly_k(L,K)$ denote the space of homogeneous polynomial maps $P:L\to K$ of degree $k$. The $G$-action is $(g\cdot P)(x)=gP(g^{-1}x)$, so that
$$
\Poly_k(L,K)^G
=
\{P\in\Poly_k(L,K):P(gx)=gP(x)\text{ for all }g\in G,\ x\in L\}.
$$
Similarly, $G$ acts on $\Hom(\Sym^kL,K)$ by $(g\cdot T)(y)=gT(g^{-1}y)$.

For a finite index set $I$, multi-indices are written
$\alpha\in\N_0^I$, with $|\alpha|=\sum_{i\in I}\alpha_i$.

\begin{proposition}\label{prop:poly-sym}
For every $k\ge0$,
$$
\Poly_k(L,K)^G\cong\Hom_G(\Sym^kL,K).
$$
\end{proposition}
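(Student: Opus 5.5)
The plan is to construct an explicit $G$-equivariant linear isomorphism $\Phi:\Hom(\Sym^kL,K)\to\Poly_k(L,K)$, given by $\Phi(T)(x)=T(x^k)$, and then pass to $G$-fixed points. The case $k=0$ is immediate: $\Sym^0L=\R$, and both sides are identified with $K^G$ (constant maps with equivariant value, resp.\ equivariant maps $\R\to K$). So assume $k\ge1$.

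\emph{Well-definedness and linearity.} For $T\in\Hom(\Sym^kL,K)$, the map $x\mapsto T(x^k)$ is a homogeneous polynomial of degree $k$. To see this, choose a basis $e_1,\dots,e_m$ of $L$ and expand $x^{\otimes k}$ multilinearly; the components of $T(x^k)$ are then degree-$k$ forms in the coordinates of $x$. The assignment $T\mapsto\Phi(T)$ is clearly linear.

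\emph{Injectivity and surjectivity.} Injectivity follows because pure powers span $\Sym^kL$ (as recalled above): if $T(x^k)=0$ for all $x$, then $T=0$. For surjectivity, observe that the monomials $e^\alpha=e_1^{\alpha_1}\cdots e_m^{\alpha_m}$ with $|\alpha|=k$ form a basis of $\Sym^kL$. Moreover, $x^k=\sum_{|\alpha|=k}\binom{k}{\alpha}x^\alpha e^\alpha$, where $x^\alpha$ is the scalar monomial in the coordinates of $x$. Given $P(x)=\sum_\alpha x^\alpha c_\alpha$ with $c_\alpha\in K$, define $T$ on this basis by $T(e^\alpha)=c_\alpha/\binom{k}{\alpha}$; then $\Phi(T)=P$. (Alternatively, a dimension count works: both spaces have dimension $\binom{m+k-1}{k}\dim K$, so injectivity suffices.) Characteristic zero is used here, since the multinomial coefficients must be invertible. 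This is the only step needing a little care, and I do not expect it to be a real obstacle.

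\emph{Equivariance.} For $g\in G$, using $g\cdot x^k=(gx)^k$ in $\Sym^kL$, we compute
$$\Phi(g\cdot T)(x)=gT(g^{-1}x^k)=gT\bigl((g^{-1}x)^k\bigr)=g\,\Phi(T)(g^{-1}x)=(g\cdot\Phi(T))(x).$$
So $\Phi$ is a $G$-equivariant linear isomorphism, and it restricts to an isomorphism of fixed-point subspaces. The fixed points of $G$ on $\Hom(\Sym^kL,K)$ are exactly $\Hom_G(\Sym^kL,K)$, and the fixed points of $G$ on $\Poly_k(L,K)$ are exactly $\Poly_k(L,K)^G$ as defined above. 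This gives the claimed isomorphism.
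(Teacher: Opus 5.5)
Your proof is correct and is essentially the paper's argument: the same map $T\mapsto(x\mapsto T(x^k))$ is shown to be a linear bijection $\Hom(\Sym^kL,K)\to\Poly_k(L,K)$ compatible with the $G$-actions, and then restricted to fixed points. The only difference is that you check bijectivity in coordinates (monomial basis with multinomial coefficients, or a dimension count), whereas the paper appeals to polarisation and the universal property of $\Sym^kL$.
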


See \cref{app:poly-sym} for a proof. The isomorphism sends $T$ to $x\mapsto T(x^k)$ and follows from polarisation. We use this identification throughout to regard homogeneous degree-$k$ equivariant polynomial maps as elements of $\Hom_G(\Sym^kL,K)$.

\subsection{Piecewise-linear maps}
Our nonlinear layers are piecewise linear, while the degree decomposition below lives in a Gaussian $L^2$ space. We first fix the class of maps to which both viewpoints apply. A map $F:L\to K$ is \emph{piecewise linear} if it is continuous and there is a finite covering of $L$ by polyhedra on each of which $F$ is affine. We write $\Hom^{\PL}_G(L,K)$ for the equivariant ones. We follow the affine-on-pieces convention of \cite{GTW2025}, which includes constant maps. In particular, applying a continuous piecewise-linear scalar function with finitely many knots coordinatewise again gives a piecewise-linear map.

\subsection{Gaussian spaces and polynomial chaos}\label{sec:chaos}

Let $L$ be an orthogonal representation of dimension $m$ and let $\gamma_L$ denote the standard Gaussian measure on $L$. This measure is invariant under $O(L)$, hence under $G$, and has moments of every order. Its Hermite expansion provides the orthogonal grading used below.

For a representation $K$, let $L^2(\gamma_L;K)$ be the Hilbert space of square-integrable maps $L\to K$ modulo null sets, with $G$ acting unitarily by $(g\cdot F)(x)=gF(g^{-1}x)$.

We use the probabilists' Hermite polynomials $\He_k$, defined by the generating function $\exp(tx-t^2/2)=\sum_{k\ge0}\He_k(x)t^k/k!$ and normalised so that $\E[\He_j(Z)\He_k(Z)]=k!\,\delta_{jk}$ for $Z\sim N(0,1)$.

In $m$ variables, the products $\prod_i\He_{k_i}(x_i)$ with $\sum_ik_i=k$ span the $k$th \emph{polynomial chaos} $\cH_k(L)$, and
\begin{equation}\label{eq:chaos}
L^2(\gamma_L)=\whbigoplus_{k\ge0}\cH_k(L).
\end{equation}
Thus the chaos index is a polynomial degree after orthogonal removal of lower-degree terms. For example, $\He_2(x)=x^2-1$ lies entirely in the second chaos. We write $\Pi_k$ for the orthogonal projection onto $\cH_k(L)$ and use the same symbol for the induced projection on $K$-valued maps:
$$\Pi_k:L^2(\gamma_L;K)\longrightarrow \cH_k(L)\otimes K.$$

The chaos projection $\Pi_k$ restricts to an $O(L)$-equivariant isomorphism from homogeneous degree-$k$ polynomials onto $\cH_k(L)$, and hence realises the identification
$$
\cH_k(L)\cong\Sym^k(L^*)\cong\Sym^kL,
$$
using the fixed invariant inner product (see \cite{Janson1997}). For a linear form $\ell(x)=\langle x,a\rangle$ with $\|a\|=s>0$, $\Pi_k(\ell^k) = s^k\He_k(\ell/s)$.

If $P$ is homogeneous of degree $k$, then
$P-\Pi_kP$ has degree strictly less than $k$. The space $\cH_k(L)$ therefore
consists of degree-at-most-$k$ polynomials orthogonal to all
lower-degree polynomials, rather than homogeneous polynomials themselves.

The following identity will be used in the factorisation and norm calculations.

\begin{lemma}[Mehler]\label{lem:mehler}
Let $(X,Y)$ be jointly standard normal with correlation $\rho$. Then
$$
\E[\He_a(X)\He_b(Y)]
=
a!\,\rho^a\,\delta_{ab}.
$$
\end{lemma}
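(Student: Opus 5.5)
The natural route is the generating function. Since $(X,Y)$ is jointly standard normal with correlation $\rho$, the pair $(sX,tY)$ is centred Gaussian with variances $s^2,t^2$ and covariance $st\rho$, for any real $s,t$. Hence
$$
\E\bigl[e^{sX+tY}\bigr]=\exp\Bigl(\tfrac{s^2+t^2}{2}+\rho st\Bigr).
$$
Multiplying the two Hermite generating functions $\exp(sX-s^2/2)$ and $\exp(tY-t^2/2)$ and taking expectations gives
$$
\sum_{a,b\ge0}\frac{s^at^b}{a!\,b!}\,\E[\He_a(X)\He_b(Y)]
=\E\bigl[e^{sX-s^2/2}e^{tY-t^2/2}\bigr]
=e^{\rho st}
=\sum_{a\ge0}\frac{\rho^a s^at^a}{a!}.
$$
Comparing the coefficient of $s^at^b$ on both sides yields $\E[\He_a(X)\He_b(Y)]/(a!\,b!)=\delta_{ab}\rho^a/a!$. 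This is exactly $a!\,\rho^a\delta_{ab}$.

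The one step needing care is exchanging expectation with the double sum. Both sides are entire functions of $(s,t)$, and the series $\sum_a \abs{\He_a(x)}\abs{s}^a/a!$ is dominated by a Gaussian-integrable function. For instance, $\sum_a\abs{\He_a(x)}r^a/a!\le C_r e^{c\abs{x}}$, which follows from writing $\He_a$ via its generating function as a contour integral, or from $\He_a(x)=\E[(x+iZ)^a]$, giving $\abs{\He_a(x)}\le\E[(\abs{x}+\abs{Z})^a]$. So Fubini and Tonelli apply, and the coefficient identification of convergent power series in two variables is legitimate.

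If one prefers to avoid analytic issues entirely, there is an alternative that is cleaner in the paper's framework. Write $Y=\rho X+\sqrt{1-\rho^2}\,W$ with $W$ independent of $X$, and use the Hermite addition formula
$$
\He_b(\rho x+\sqrt{1-\rho^2}\,w)=\sum_j\binom bj\rho^j(1-\rho^2)^{(b-j)/2}\He_j(x)\He_{b-j}(w),
$$
which holds because $\rho^2+(1-\rho^2)=1$ and follows from the generating function. Then independence and $\E\He_{b-j}(W)=\delta_{bj}$ leave only $\rho^b\He_b(X)$. One-variable orthogonality then gives $a!\rho^a\delta_{ab}$. The only mild obstacle in either route is justifying the generating-function manipulation, and both are routine.
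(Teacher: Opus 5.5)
Your proof is correct. It differs from the paper's only in that the paper gives no argument at all: it imports the identity from \cite[Theorem~3.9]{Janson1997}, that is, from the Wick-product calculus. There, presumably, it is the one-variable case of the covariance formula $\E[{:}\xi_1\cdots\xi_a{:}\,{:}\eta_1\cdots\eta_b{:}]=\delta_{ab}\sum_{\pi\in S_a}\prod_i\E[\xi_i\eta_{\pi(i)}]$, taking all $\xi_i=X$ and $\eta_j=Y$ and using ${:}X^a{:}=\He_a(X)$.

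Your generating-function computation needs only the definition of $\He_k$ that the paper adopts. The bound $\abs{\He_a(x)}\le\E[(\abs{x}+\abs{Z})^a]$, which comes from $\He_a(x)=\E[(x+iZ)^a]$, gives $\sum_a\abs{\He_a(x)}r^a/a!\le 2e^{r^2/2}e^{r\abs{x}}$. This justifies Fubini once you note that $\E[e^{c\abs{X}+c'\abs{Y}}]<\infty$ by Cauchy--Schwarz.

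Your second route is cleaner still. The addition formula is an identity of formal power series in $t$, so after writing $Y=\rho X+\sqrt{1-\rho^2}\,W$ everything reduces to the one-variable orthogonality the paper takes as its normalisation, with no analysis. The degenerate cases $\rho=\pm1$ are covered with the convention $0^0=1$.

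What the citation buys the paper is the multivariate Wick framework that also underlies $\cH_k(L)\cong\Sym^kL$. Your version suffices for the lemma's only use, the Gram identity $\ip{\Pi_k(\ip{x}{a_b}^k)}{\Pi_k(\ip{x}{a_{b'}}^k)}=k!\,C_{bb'}^k$ in \cref{thm:master} and its unequal-scale version in \cref{prop:intransitive}. This follows after rescaling, since the normalised coordinates have correlation $C_{bb'}/(s_bs_{b'})$.
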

\begin{proof}
    This is a standard identity. See \cite[Theorem~3.9]{Janson1997}.
\end{proof}

To use this Hilbert-space decomposition for the piecewise-linear maps above, we only need to check square-integrability.

\begin{proposition}\label{prop:integrable}
Let $L$ and $K$ be finite-dimensional. If $F:L\to K$ is piecewise linear with finitely many polyhedral pieces, then $F\in L^2(\gamma_L;K)$.
\end{proposition}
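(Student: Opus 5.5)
The plan is to show that a piecewise-linear map grows at most linearly, and then to use the fact that the standard Gaussian measure has finite second moments.

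Write $L=\bigcup_{j=1}^N P_j$ with each $P_j$ a polyhedron on which $F$ is affine, say $F|_{P_j}(x)=T_jx+c_j$ with $T_j:L\to K$ linear and $c_j\in K$. Set
$$
M=\max_j\norm{T_j}_{\mathrm{op}},\qquad c=\max_j\norm{c_j}.
$$
Every $x\in L$ lies in some $P_j$, so $\norm{F(x)}\le M\norm{x}+c$ for all $x$.

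Next I check measurability. $F$ is continuous by the definition of piecewise linear, so it is Borel measurable. Measurability can also be seen without continuity: $F$ agrees with an affine map on each of the finitely many closed, hence Borel, sets $P_j$.

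Finally,
$$
\int_L\norm{F(x)}^2\,d\gamma_L(x)\le\int_L(M\norm{x}+c)^2\,d\gamma_L\le 2M^2\,\E\norm{X}^2+2c^2=2M^2m+2c^2<\infty,
$$
where $X\sim\gamma_L$ and $m=\dim L$. Hence $F\in L^2(\gamma_L;K)$.

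There is no real obstacle. The only points needing care are that the covering is finite, so the maxima $M$ and $c$ exist, and that the affine convention allows constant terms $c_j$; the bound above already accounts for those.
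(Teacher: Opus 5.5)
Your proof is correct and follows the paper's argument: a uniform linear-growth bound $\norm{F(x)}\le M\norm{x}+c$ coming from the finitely many affine pieces, then $\norm{F(x)}^2\le 2M^2\norm{x}^2+2c^2$ together with finiteness of the Gaussian second moment. Your explicit measurability check and the evaluation $\E\norm{X}^2=\dim L$ are small details that the paper leaves implicit.
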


\begin{proof}
Since $F$ is affine on finitely many polyhedral pieces, there exist constants $C,D\geq 0$ such that $\|F(x)\|\leq C\|x\|+D$ for all $x\in L$. Hence $\|F(x)\|^2\leq 2C^2\|x\|^2+2D^2$. If $X\sim\gamma_L$, then $\E\|X\|^2<\infty$, so
$$
\int_L \|F(x)\|^2\,d\gamma_L(x)
\leq
2C^2\E\|X\|^2+2D^2<\infty.
$$
Thus $F\in L^2(\gamma_L;K)$.
\end{proof}

Two continuous maps agreeing $\gamma_L$-almost everywhere agree everywhere, because $\gamma_L$ has full support and the set where they differ is open. Consequently, the chaos components determine a continuous map uniquely.

\vspace{0.7cm}
\section{Degree-resolved equivariant maps}\label{sec:gaussian}

We first identify the equivariant part of each polynomial chaos. We then extend the grading to several source summands and relate occurrence at some degree to the finite-group existence criterion.

\subsection{The equivariant decomposition}
The Gaussian grading is useful here only if it respects equivariance. The next theorem says that it does so, degree by degree.

\begin{theorem}\label{thm:gaussian}
Let $G$ be a finite or compact group acting orthogonally on finite-dimensional real spaces $L$ and $K$. There is a canonical isomorphism of Hilbert spaces
$$
L^2(\gamma_L;K)^G\cong\whbigoplus_{k\ge0}\Hom_G(\Sym^kL,K).
$$
\end{theorem}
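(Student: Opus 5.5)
The plan is to combine three ingredients. The first is the chaos decomposition \eqref{eq:chaos}. The second is the $O(L)$-equivariant identification $\cH_k(L)\cong\Sym^kL$ recalled in \cref{sec:chaos}. The third is the fact that taking $G$-invariants commutes with orthogonal direct sums of unitary representations.

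First, I tensor \eqref{eq:chaos} with the finite-dimensional space $K$. This gives an orthogonal decomposition
$$L^2(\gamma_L;K)=\whbigoplus_{k\ge0}\cH_k(L)\otimes K,$$
with projections $\Pi_k$. Since $\gamma_L$ is $G$-invariant and $G$ acts orthogonally on $L$, the action $(g\cdot F)(x)=gF(g^{-1}x)$ is unitary. Each $\cH_k(L)$ is preserved by $O(L)$, because $\Pi_k$ is built from polynomial degree and orthogonality, and both are $O(L)$-invariant notions. Hence each summand $\cH_k(L)\otimes K$ is $G$-stable, and every $\Pi_k$ commutes with the $G$-action.

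It follows that if $F$ is $G$-invariant, then every component $\Pi_kF$ is invariant. Conversely, an $L^2$-convergent sum of invariant components is invariant, since the action is continuous. Therefore
$$L^2(\gamma_L;K)^G=\whbigoplus_{k}\bigl(\cH_k(L)\otimes K\bigr)^G,$$
and this is still an orthogonal Hilbert sum.

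Second, I identify each summand. The identification $\cH_k(L)\cong\Sym^k(L^*)$ is $O(L)$-equivariant, hence $G$-equivariant. Using the invariant inner product, $\Sym^k(L^*)\cong(\Sym^kL)^*$, and so
$$\cH_k(L)\otimes K\cong(\Sym^kL)^*\otimes K\cong\Hom(\Sym^kL,K)$$
as $G$-representations. Taking invariants gives $\Hom_G(\Sym^kL,K)$. Concretely, $T\in\Hom_G(\Sym^kL,K)$ is sent to $\Pi_k[x\mapsto T(x^k)]$, which is consistent with \cref{prop:poly-sym}. This map is canonical: it uses only the chaos projection and the fixed inner products.

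Third, I make the isomorphism isometric and check that it is onto. On each $\Hom_G(\Sym^kL,K)$ I transport the $L^2$ norm, which by Mehler-type computations (\cref{lem:mehler}) is $k!$ times the Hilbert–Schmidt norm; this is exactly the inner product for which the Hilbert direct sum is formed. Surjectivity then follows from completeness of \eqref{eq:chaos}.

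The main obstacle is the $G$-stability of each chaos, together with exact compatibility of the identification $\cH_k\cong\Sym^k L^*$ with the action. I would verify it directly. If $P$ is a polynomial of degree at most $k$ orthogonal to all polynomials of degree less than $k$, then $P\circ g^{-1}$ has the same degree bound. It is also still orthogonal to lower-degree polynomials, because $g$ preserves $\gamma_L$ and preserves the space of polynomials of each degree. Equivariance of $\Pi_k$ restricted to homogeneous polynomials then gives compatibility with $\Sym^k L^*$.

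For compact $G$, no further input is needed: finite-dimensionality of each summand makes the invariant subspace closed, and the continuity of the action holds because it is continuous on each finite-dimensional piece.
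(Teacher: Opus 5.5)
Your proposal is correct and follows the paper's proof: you tensor the chaos decomposition with $K$, use the $O(L)$-equivariance of each $\Pi_k$ to commute taking $G$-fixed points with the Hilbert sum, and identify $(\cH_k(L)\otimes K)^G$ with $\Hom_G(\Sym^kL,K)$ via $\cH_k(L)\cong\Sym^k(L^*)$ and tensor--$\Hom$ duality. Your explicit normalisation $\|\Pi_k[x\mapsto T(x^k)]\|_{L^2}^2=k!\,\|T\|_{\mathrm{HS}}^2$ usefully fixes the Hilbert structure that the paper leaves implicit; your appeal to continuity in the group variable for compact $G$ is unnecessary, because the fixed points already form a closed subspace when each $g$ acts by a unitary operator commuting with every $\Pi_k$.
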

\begin{proof}
Tensoring \eqref{eq:chaos} with the finite-dimensional space $K$ gives $L^2(\gamma_L;K)=\whbigoplus_k(\cH_k(L)\otimes K)$. Each chaos projection is $O(L)$-equivariant. Since $G\subseteq O(L)$, it is also $G$-equivariant and hence commutes with the projection onto $G$-fixed vectors. Therefore taking fixed points commutes with the orthogonal sum. Finally
$$
(\cH_k(L)\otimes K)^G\cong(\Sym^k(L^*)\otimes K)^G\cong\Hom_G(\Sym^kL,K)
$$
using the $O(L)$-equivariant identification
$\cH_k(L)\cong\Sym^kL$ above and the tensor--$\Hom$ identification.
\end{proof}

The key point is that each chaos projection is equivariant under the full orthogonal group. It therefore preserves the $G$-fixed subspace, allowing the Gaussian grading to restrict to equivariant maps.

\begin{corollary}\label{cor:pl-chaos}
Every equivariant piecewise-linear $F:L\to K$ has a unique sequence $(\Pi_kF)_{k\ge0}$ with $\Pi_kF\in\Hom_G(\Sym^kL,K)$, and $F\mapsto(\Pi_kF)_k$ is injective on continuous square-integrable equivariant maps.
\end{corollary}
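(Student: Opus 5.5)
The plan is to combine \cref{prop:integrable}, \cref{thm:gaussian} and the remark on continuous maps at the end of \cref{sec:chaos}. First, let $F:L\to K$ be equivariant and piecewise linear. By \cref{prop:integrable}, $F\in L^2(\gamma_L;K)$. Equivariance $F(gx)=gF(x)$ for all $x$ says exactly that $g\cdot F=F$ under the unitary action $(g\cdot F)(x)=gF(g^{-1}x)$. Hence $F$ lies in the fixed subspace $L^2(\gamma_L;K)^G$.

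Next, I would apply \cref{thm:gaussian}. In its proof, each chaos projection $\Pi_k$ is $O(L)$-equivariant, hence $G$-equivariant. So $\Pi_kF$ is $G$-fixed and lies in $(\cH_k(L)\otimes K)^G$. Under the identification $\cH_k(L)\cong\Sym^kL$ and the tensor--$\Hom$ identification, this space is $\Hom_G(\Sym^kL,K)$. Uniqueness of the sequence is immediate, because $\Pi_k$ is a well-defined orthogonal projection and the identifications are canonical.

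For injectivity, suppose $F_1,F_2$ are continuous, square-integrable and equivariant, with $\Pi_kF_1=\Pi_kF_2$ for all $k$. The orthogonal decomposition \eqref{eq:chaos}, tensored with $K$, gives $F=\sum_k\Pi_kF$ in $L^2$. Hence $F_1=F_2$ as elements of $L^2(\gamma_L;K)$, that is, $\gamma_L$-almost everywhere. The set $\{x:F_1(x)\neq F_2(x)\}$ is open by continuity. Since $\gamma_L$ has a strictly positive density, an open null set is empty, so $F_1=F_2$ everywhere. Piecewise-linear maps are continuous by definition, so this applies to them in particular.

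There is no real obstacle here. The only point needing care is to keep two kinds of equality apart. Equality of chaos components gives equality only in $L^2$, that is, almost everywhere, and continuity together with the full support of $\gamma_L$ is what upgrades this to pointwise equality.
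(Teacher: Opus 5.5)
Your proposal is correct and follows essentially the same route as the paper. Square-integrability comes from \cref{prop:integrable}, and the $O(L)$-equivariance of $\Pi_k$ from \cref{thm:gaussian} places each component in $\Hom_G(\Sym^kL,K)$. Orthogonal completeness of the chaos decomposition gives equality in $L^2$, and continuity together with the full support of $\gamma_L$ upgrades this to pointwise equality.
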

\begin{proof}
Square-integrability follows from \cref{prop:integrable}. Orthogonality gives uniqueness in $L^2$, and full support of the Gaussian measure gives pointwise uniqueness for continuous maps.
\end{proof}

\Cref{cor:pl-chaos} establishes injectivity but does not characterise the image of the transform. In particular, not every square-summable sequence of equivariant polynomial components arises from a piecewise-linear map. The finite-group existence questions considered here do not require such a characterisation. The distinction becomes essential for compact connected groups, as discussed in \cref{sec:limitations}.

\subsection{Multi-sources and multidegrees}\label{sec:multichaos}
Fix a finite orthogonal $G$-stable decomposition $V=\bigoplus_{i\in I}W_i$ into nonzero subrepresentations. The summands need not be irreducible or pairwise non-isomorphic. The multidegree refinement recorded below separates the contributions of the summands $W_i$ within a single total degree. This allows the coordinatewise results of \cref{sec:coordinatewise} to be applied to a direct-sum input without treating its summands separately.

Multi-source inputs are not used in our constructions; the following decompositions indicate how the theory extends to them.

Symmetric powers of a direct sum split by multidegree. Iterating $\Sym(U\oplus W)\cong\Sym U\otimes\Sym W$ gives the $G$-equivariant isomorphism
$$
\Sym^kV\cong\bigoplus_{|\alpha|=k}\bigotimes_{i\in I}\Sym^{\alpha_i}W_i.
$$
Hence for any $K$,
$$
\Hom_G(\Sym^kV,K)\cong\bigoplus_{|\alpha|=k}\Hom_G\Bigl(\bigotimes_i\Sym^{\alpha_i}W_i,K\Bigr).
$$

\begin{example}\label{ex:two-source}
For $V=W_1\oplus W_2$,
$$
\Sym^2V\cong\Sym^2W_1\oplus(W_1\otimes W_2)\oplus\Sym^2W_2 ,
$$
$$
\Sym^3V\cong\Sym^3W_1\oplus(\Sym^2W_1\otimes W_2)\oplus(W_1\otimes\Sym^2W_2)\oplus\Sym^3W_2 .
$$
The mixed summands have no counterpart in a GTW interaction graph whose edges have a single source. Instead, they record simultaneous nonlinear dependence on two summands.
\end{example}

On the Gaussian side, orthogonality of the $W_i$ makes the components of $x\sim\gamma_V$ independent, so $\gamma_V=\bigotimes_i\gamma_{W_i}$, and the Hilbert tensor product gives the \emph{multichaos} decomposition
$$
L^2(\gamma_V)\cong\whbigoplus_{\alpha\in\N_0^I}\bigotimes_i\cH_{\alpha_i}(W_i),
$$
whose total-degree-$k$ part is the sum over $|\alpha|=k$. Write $\Pi_\alpha$ for the projection onto the $\alpha$th multichaos. Tensoring with $K$ and taking fixed points degreewise, as in \cref{thm:gaussian}, gives
$$
L^2(\gamma_V;K)^G\cong\whbigoplus_{\alpha\in\N_0^I}\Hom_G\Bigl(\bigotimes_i\Sym^{\alpha_i}W_i,K\Bigr).
$$

\subsection{Degree sets and detection}
We now forget multiplicities and record only the degrees in which the target can occur. This is the information needed for the existence and parity questions below.

\begin{definition}\label{def:degree-set}
For representations $L,K$ define the \emph{degree set} 
$$
D(L,K):=\{k\ge0:\Hom_G(\Sym^kL,K)\ne0\},
$$
and $d(L,K):=\min D(L,K)\in\N_0\cup\{\infty\}$. We use $\min\varnothing=\infty$. 
\end{definition}

If $K^G\ne0$, then degree zero records constant equivariant maps. When the question is transfer of input information, the first positive degree may therefore be the more relevant quantity.

\begin{proposition}\label{prop:degree-parity}
Let $L\neq 0$, and let $G$ be finite or compact. If $k\in D(L,K)$, then $k+2m\in D(L,K)$ for every $m\in\N_0$. Consequently, for each $\varepsilon\in\{0,1\}$ define
$$
d_\varepsilon(L,K)
:=
\min\{k\in D(L,K):k\equiv\varepsilon\pmod 2\}.
$$
Then
$$
D(L,K)
=
\bigl(d_0(L,K)+2\N_0\bigr)
\cup
\bigl(d_1(L,K)+2\N_0\bigr),
$$
where a term is omitted if $d_\varepsilon(L,K)$ is infinite.
\end{proposition}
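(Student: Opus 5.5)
The plan is to multiply by the invariant quadratic form. Since $G$ acts orthogonally on $L$, the squared norm $q(x)=\norm{x}^2$ is a nonzero $G$-invariant homogeneous polynomial of degree two on $L$; it is nonzero because $L\neq0$. Take $k\in D(L,K)$. By \cref{prop:poly-sym} there is a nonzero $P\in\Poly_k(L,K)^G$. Define
$$
P'(x)=\norm{x}^{2m}P(x).
$$
This map is homogeneous of degree $k+2m$ and has values in $K$. It is equivariant, because $P'(gx)=\norm{gx}^{2m}P(gx)=\norm{x}^{2m}gP(x)=gP'(x)$, using $\norm{gx}=\norm{x}$. Applying \cref{prop:poly-sym} again identifies $P'$ with an element of $\Hom_G(\Sym^{k+2m}L,K)$. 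So it only remains to check that this element is nonzero.

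Nonvanishing is the one step that needs an argument. The ring of real polynomial functions on $L$ is an integral domain. Fix a basis of $K$; some coordinate $P_j$ of $P$ is a nonzero polynomial. Then $\norm{x}^{2m}P_j$ is a product of two nonzero polynomials, hence nonzero. Alternatively, $P$ is nonzero on a nonempty open set and $\norm{x}^{2m}>0$ away from the origin, so $P'$ is nonzero on that set minus the origin. Either way $P'\neq0$, and therefore $k+2m\in D(L,K)$. Note that the argument uses only orthogonality of the action, so it applies verbatim to finite and compact $G$.

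For the decomposition of $D(L,K)$, split it by parity. Fix $\varepsilon\in\{0,1\}$ and let $D_\varepsilon$ be the set of elements of $D(L,K)$ congruent to $\varepsilon$ modulo $2$. If $D_\varepsilon$ is empty, then $d_\varepsilon=\infty$ and that term is omitted. Otherwise $d_\varepsilon$ is the minimum of $D_\varepsilon$. The first part gives $d_\varepsilon+2\N_0\subseteq D_\varepsilon$. Conversely, every element of $D_\varepsilon$ has the same parity as $d_\varepsilon$ and is at least $d_\varepsilon$, so it lies in $d_\varepsilon+2\N_0$. Taking the union over $\varepsilon$ gives the stated formula. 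I expect no serious obstacle; the only point needing care is the nonvanishing of $P'$, which the integral-domain argument settles.
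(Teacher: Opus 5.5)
Your proof is correct and follows the paper's argument: multiplying by the nonzero invariant quadratic $\norm{x}^2$ maps $\Poly_k(L,K)^G$ injectively into $\Poly_{k+2}(L,K)^G$ because the polynomial ring is an integral domain, and the parity-tail description of $D(L,K)$ then follows exactly as you describe.
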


\begin{proof}
Since $G$ is finite or compact, $L$ admits a $G$-invariant inner product, so $q(x)=\|x\|^2$ is a nonzero invariant quadratic polynomial. Hence multiplication by $q$ sends
$$
\Poly_k(L,K)^G \longrightarrow \Poly_{k+2}(L,K)^G.
$$
This map is injective because the polynomial ring on $L$ is an integral domain. Hence $k\in D(L,K)\implies k+2\in D(L,K)$.
Thus, within each parity, occurrence persists from the first occurring degree onward, giving the stated $D(L,K)$, with the corresponding term omitted when $d_\varepsilon(L,K)=\infty$.
\end{proof}

Thus, once the first even and odd occurrences are known, the entire degree set is known. The remaining task is to determine those two thresholds.

\begin{theorem}[Finite-group degree detection]\label{thm:detection}
Let $L\ne0$ and $K$ be (not necessarily irreducible) real $G$-representations, and put $N=\ker L$ and $Q=G/N$. Then
$$
D(L,K)\ne\varnothing
\quad\Longleftrightarrow\quad
K^N\ne0.
$$
In this case
$d(L,K)\le|Q|-1$. If $K$ is irreducible, the condition is equivalent to
$\ker L\subseteq\ker K$.
\end{theorem}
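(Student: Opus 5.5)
The forward direction is immediate. If $P\in\Poly_k(L,K)^G$ is nonzero, then for $n\in N$ we have $P(x)=P(nx)=nP(x)$, so every value of $P$ lies in $K^N$; since $P\neq0$ some value is nonzero and $K^N\neq0$.

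For the converse, and for the bound $d(L,K)\le|Q|-1$, the plan is to reduce to a faithful action of $Q$ and then build an equivariant polynomial by interpolation on a regular orbit. Everything factors through $Q$: $Q$ acts faithfully on $L$ and acts on $K^N$, which is $G$-stable because $N$ is normal. An equivariant polynomial $L\to K^N$ composed with the inclusion $K^N\hookrightarrow K$ is $G$-equivariant, so it suffices to work with $Q$, $L$ and $W=K^N\neq0$.

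\emph{Step 1: a regular orbit.} Because $Q$ acts faithfully and is finite, each fixed space $L^q$ with $q\neq1$ is a proper subspace. A real vector space is not a finite union of proper subspaces, so there is $x_0\in L$ whose stabiliser in $Q$ is trivial. Its orbit $Qx_0$ then consists of exactly $|Q|$ distinct points.

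\emph{Step 2: interpolation.} Choose a nonzero $w\in W$. By Lagrange interpolation there is a scalar polynomial $f$ on $L$ of degree at most $|Q|-1$ with $f(x_0)=1$ and $f(qx_0)=0$ for $q\neq1$. One such choice is a product over $q\neq1$ of affine forms $\ell_q$ with $\ell_q(qx_0)=0$ and $\ell_q(x_0)\ne0$, suitably normalised. Averaging gives
\[
P(x)=\sum_{q\in Q} f(q^{-1}x)\,q w,
\]
which is $Q$-equivariant, has degree at most $|Q|-1$, and satisfies $P(x_0)=w\neq0$.

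\emph{Step 3: passing to homogeneous degrees.} $P$ is not homogeneous, but its homogeneous components $P_j$ are each equivariant, because the action is linear and so preserves degree. Since $P\neq0$, some $P_j\neq0$ with $j\le|Q|-1$. By \cref{prop:poly-sym} this gives $j\in D(L,K)$, and hence $d(L,K)\le|Q|-1$. The only delicate point in this step is making sure the interpolation degree really is at most $|Q|-1$, which the product of $|Q|-1$ affine forms provides.

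\emph{The irreducible case.} If $K$ is irreducible, then $K^N$ is $G$-stable since $N$ is normal. So either $K^N=K$, which means $N\subseteq\ker K$, or $K^N=0$. Hence the condition $K^N\neq0$ is equivalent to $\ker L\subseteq\ker K$.

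I expect the main obstacle to be the existence of the regular orbit in Step 1; the avoidance argument for finitely many proper subspaces handles it.
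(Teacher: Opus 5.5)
Your proof is correct and follows the paper's argument. You show the forward direction using the trivial action of $N$ on the source; for the converse you take a regular $Q$-orbit, apply Lagrange interpolation, average over $Q$ to get an equivariant polynomial with $P(x_0)=w$, and extract a nonzero homogeneous component of degree at most $|Q|-1$. The only small difference is that you use a separate affine form $\ell_q$ for each $q\neq1$ (e.g.\ $\ell_q(x)=\langle x-qx_0,\,x_0-qx_0\rangle$), whereas the paper first finds one linear functional separating the whole orbit and interpolates in that single variable; your version makes that orbit-separation step unnecessary.
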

\begin{proof}
Suppose $k\in D(L,K)$ and choose $0\ne T\in\Hom_G(\Sym^kL,K)$. Since $N=\ker L$ acts trivially on
$\Sym^kL$, we have $nT(y)=T(ny)=T(y)$ for all $n \in N$ and $y \in \Sym^kL$. Thus $\operatorname{im}T\subseteq K^N$, and since $T\ne0$ we obtain $K^N\ne0$. If $K$ is irreducible, then $K^N$ is $G$-stable because $N\triangleleft G$. Hence $K^N\ne0$ implies $K^N=K$. Equivalently, $\ker L\subseteq\ker K$.

For the converse, assume $K^N\ne0$. Both $L$ and $K^N$ descend to $Q=G/N$, and $L$ is faithful as a $Q$-representation. We argue in three steps.

\emph{A free orbit exists.} For $1\ne q\in Q$ the fixed space $L^q$ is a proper subspace, since $L$ is faithful. The finite union of these proper subspaces cannot cover $L$, so some $v\in L$ has trivial stabiliser, and the orbit $Qv$ consists of $|Q|$ distinct points.

\emph{The orbit can be separated by a linear form.}
A functional $\ell\in L^*$ fails to separate two distinct orbit points $qv$ and $q'v$ precisely when $\ell(qv-q'v)=0$. Since $qv-q'v\ne0$, the set of such functionals is a proper hyperplane
of $L^*$. There are only finitely many pairs of orbit points, so a
finite union of these hyperplanes cannot cover $L^*$. Hence some
$\ell\in L^*$ separates all points of $Qv$.

\emph{Interpolate.} With such an $\ell$, the Lagrange polynomial
$$
p(x)=\prod_{1\ne q\in Q}\frac{\ell(x)-\ell(qv)}{\ell(v)-\ell(qv)}
$$
has degree at most $|Q|-1$, with $p(v)=1$ and $p(qv)=0$. Fix $0\ne w\in K^N$ and average:
$$
F(x)=\sum_{q\in Q}p(q^{-1}x)\,qw .
$$
Reindexing $q=aq'$ shows $F(ax)=aF(x)$, so $F$ is equivariant, and $F(v)=w\ne0$, so $F\ne0$. The action is linear, so each homogeneous component $F_j$ of $F$ is separately equivariant, and some $F_j\ne0$ with $j\le|Q|-1$. So $\Poly_j(L,K^N)^Q \neq 0$. Hence $\Hom_Q(\Sym^jL,K^N)\ne0$. Inflating along $G\twoheadrightarrow Q$ and
composing with the inclusion $K^N\hookrightarrow K$ gives
$0\ne\Hom_G(\Sym^jL,K^N)\subseteq\Hom_G(\Sym^jL,K)$, so $j\in D(L,K)$.
\end{proof}

The key point here is that every irreducible representation of a finite group occurs somewhere in the symmetric algebra of a faithful representation. The interpolation proof gives the explicit bound $d(L,K)\le|G/\ker L|-1$. No such argument is available for a connected group, which is consistent with the failure discussed in \cref{sec:limitations}.

For irreducible $L,K$, combining \cref{thm:detection} with \eqref{eq:gtw-intro} identifies the GTW existence criterion with $D(L,K)\ne\varnothing$. The following refinement also decides whether each parity occurs.

\begin{corollary}\label{cor:parity-existence}
Let $L\neq0$ and let $K$ be irreducible with
$\ker L\subseteq\ker K$. Put $Q=G/\ker L$, so that $Q$ acts
faithfully on $L$ and naturally on $K$.

\begin{enumerate}
    \item If no element of $Q$ acts as $-I_L$, then both even and odd
    degrees occur, with
    $$
    d_0(L,K),\,d_1(L,K)\le 2|Q|-1.
    $$

    \item If some $z\in Q$ acts as $-I_L$, then $z$ is a central
    involution and acts on $K$ as either $I_K$ or $-I_K$.
    In the first case only even degrees occur, and in the second only
    odd degrees occur. In either case, the occurring parity has threshold
    at most $|Q|-1$.
\end{enumerate}
\end{corollary}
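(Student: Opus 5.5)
Throughout, we work over $Q=G/\ker L$. As in the proof of \cref{thm:detection}, $K$ descends to a $Q$-representation because $\ker L\subseteq\ker K$, and equivariant polynomial maps for $G$ and for $Q$ coincide. The plan is to reuse the interpolation construction from that proof, but to build in a parity constraint.

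For part (b), suppose $z\in Q$ acts as $-I_L$. Then $z$ is central, since $-I_L$ commutes with every operator on $L$ and $Q$ acts faithfully. It is an involution, since $z^2$ acts as $I_L$ and so $z^2=1$ by faithfulness. Because $K$ is irreducible and $z$ is a central involution, the eigenspaces $K^{\pm}$ of $z$ are $Q$-stable, so $K$ equals one of them. Hence $z$ acts as $\pm I_K$.

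Now take any $T\in\Hom_G(\Sym^kL,K)$ and its associated polynomial $P$, via \cref{prop:poly-sym}. Equivariance gives
\[
(-1)^kP(x)=P(-x)=P(zx)=zP(x)=\pm P(x).
\]
So $P=0$ unless the parity of $k$ matches the sign. By \cref{thm:detection} we have $D(L,K)\neq\varnothing$ and $d(L,K)\le|Q|-1$. The minimum of $D(L,K)$ must have the permitted parity, so that parity occurs with threshold at most $|Q|-1$.

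For part (a), suppose no element acts as $-I_L$. I would take the polynomial $F$ from the proof of \cref{thm:detection}, which has degree at most $|Q|-1$ and satisfies $F(v)=w\neq0$. Then split it into even and odd parts,
\[
F_\pm(x)=\tfrac12\bigl(F(x)\pm F(-x)\bigr).
\]
Both parts are equivariant, since $x\mapsto -x$ commutes with the action. The aim is to arrange that both $F_+$ and $F_-$ are nonzero. It suffices to have
\[
F(v)\neq0 \quad\text{and}\quad F(-v)=0,
\]
because then $F_\pm(v)=\tfrac12 w\neq0$. This forces some homogeneous component of each parity to be nonzero, and each such component is equivariant. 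To achieve $F(-v)=0$, I would interpolate on the enlarged set $Qv\cup Q(-v)$.

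The hypothesis is used exactly here. The point $-v$ lies outside $Qv$ for a suitably generic $v$. Indeed, $qv=-v$ means $v$ lies in the $(-1)$-eigenspace of $q$. For every $q$ this eigenspace is proper, since no $q$ acts as $-I_L$, and a finite union of proper subspaces does not cover $L$. So I would choose $v$ avoiding both all the fixed spaces $L^q$ with $q\neq1$ and all the $(-1)$-eigenspaces. The set $Qv\cup Q(-v)$ then has $2|Q|$ distinct points.

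Choose a linear form $\ell$ separating all $2|Q|$ points, as before. Let $p$ be the Lagrange polynomial equal to $1$ at $v$ and $0$ at the other $2|Q|-1$ points; it has degree at most $2|Q|-1$. Set
\[
F(x)=\sum_{q\in Q}p(q^{-1}x)\,qw .
\]
Then $F(v)=w$, and $F(-v)=\sum_q p(-q^{-1}v)\,qw=0$, because every point $-q^{-1}v$ is a zero of $p$. This yields nonzero equivariant components of both parities in degrees at most $2|Q|-1$.

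Finally, the conclusion that all even and all odd degrees from the thresholds onward occur follows from \cref{prop:degree-parity}. The main obstacle is the genericity step, namely that $-v\notin Qv$; this is precisely where the hypothesis on $-I_L$ enters. The rest of the argument is the interpolation machinery from \cref{thm:detection}.
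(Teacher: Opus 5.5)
Your proof is correct. It differs from the paper's mainly in packaging, and in part (b) your argument is more elementary.

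The paper never reopens the interpolation argument. Instead it encodes parity as an equivariance condition for the auxiliary group $\widetilde Q=Q\times\{\pm1\}$. This group acts on $L$ by $(q,t)\cdot x=t\,qx$ and on $K$ by $(q,t)\cdot y=t^\varepsilon qy$. A homogeneous $Q$-equivariant polynomial is then $\widetilde Q$-equivariant exactly when its degree is $\equiv\varepsilon\pmod 2$, and the paper applies \cref{thm:detection} to $\widetilde Q$ as a black box.

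In (a), the hypothesis is precisely faithfulness of $\widetilde Q$ on $L$, which gives the bound $|\widetilde Q|-1=2|Q|-1$. Your set $Qv\cup Q(-v)$ is exactly a free $\widetilde Q$-orbit. Your choice of $v$ off all $(-1)$-eigenspaces is the freeness condition. Your $F_+$ and $F_-$ are, up to a factor $2$, the $\widetilde Q$-averaged interpolants for $\varepsilon=0$ and $\varepsilon=1$. So in substance the constructions coincide.

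In (b), the paper computes the kernel $\{(1,1),(z,-1)\}$ of the $\widetilde Q$-action on $L$. It reads off which parity has $K^N\neq0$, and the faithful quotient has order $|Q|$. You instead kill the wrong parity directly via $(-1)^kP(x)=P(zx)=zP(x)$. You then get existence and the bound $|Q|-1$ from \cref{thm:detection} applied to $Q$ itself, which is more direct.

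The paper's route handles both cases uniformly without re-entering the proof of \cref{thm:detection}. Yours shows explicitly where the $-I_L$ hypothesis is used, namely that every $(-1)$-eigenspace is a proper subspace.
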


\begin{proof}
Fix $\varepsilon\in\{0,1\}$ and let
$\widetilde Q=Q\times\{\pm1\}$ act on $L$ and $K$ by
$$
(q,t)\cdot x=t\,qx,
\qquad
(q,t)\cdot y=t^\varepsilon qy.
$$
A homogeneous $Q$-equivariant polynomial $P:L\to K$ is
$\widetilde Q$-equivariant exactly when its degree is congruent to
$\varepsilon$ modulo $2$.

Suppose first that no element of $Q$ acts as $-I_L$. Then the
$\widetilde Q$-action on $L$ is faithful. By \cref{thm:detection},
there exists a nonzero $\widetilde Q$-equivariant homogeneous polynomial
of degree at most $|\widetilde Q|-1=2|Q|-1$. Applying this for $\varepsilon=0$ and $\varepsilon=1$ gives both parities.

Now suppose $z\in Q$ acts as $-I_L$. Since $Q$ acts faithfully on $L$,
we have $z^2=1$, and for every $q\in Q$ the elements $qzq^{-1}$ and $z$
act identically on $L$. Hence $z$ is central. The kernel of the $\widetilde Q$-action on $L$ is therefore $\{(1,1),(z,-1)\}$.

By \cref{thm:detection}, a degree of parity $\varepsilon$ occurs exactly
when this kernel acts trivially on $K$, that is, when
$$
(-1)^\varepsilon zy=y
\qquad\text{for all }y\in K.
$$

Since $z$ is central, its action on $K$ lies in $\End_G(K)$. Moreover
$z^2=1$, so irreducibility implies that $z$ acts as either $I_K$ or
$-I_K$. Hence exactly one parity occurs: even if $z$ acts as $I_K$, and
odd if it acts as $-I_K$. The faithful quotient has order $|Q|$, so the
corresponding threshold is at most $|Q|-1$.
\end{proof}

\subsection{Computing degree sets}
The detection theorem decides whether some degree occurs, but not where the first occurrences lie. Molien's formula packages the dimensions in all degrees into one generating series.

\begin{proposition}[Molien series]\label{prop:molien}
Let $G$ be finite and $c_k=\dim\Hom_G(\Sym^kL,K)$. Then
$$
\sum_{k\ge0}c_kt^k=\frac1{|G|}\sum_{g\in G}\frac{\chi_K(g)}{\det(I-t\rho_L(g))}
=\frac1{|G|}\sum_{g\in G}\chi_K(g)\exp\Bigl(\sum_{m\ge1}\frac{t^m}{m}\chi_L(g^m)\Bigr).
$$
Here $\rho_L:G\to\mathrm{GL}(L)$ is the homomorphism defining the action of $G$ on $L$, and $\chi_L$ and $\chi_K$ are the characters of $L$ and $K$.
\end{proposition}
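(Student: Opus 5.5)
The plan is to compute $c_k$ as the dimension of a fixed-point space and average the characters of the graded pieces. Since $G$ is finite and everything is real, the tensor--$\Hom$ identification gives
$$
\Hom_G(\Sym^kL,K)\cong\bigl((\Sym^kL)^*\otimes K\bigr)^G .
$$
Because $L$ carries an invariant inner product, $(\Sym^kL)^*\cong\Sym^kL$ as representations, and all characters are real. The projection onto fixed points is $\frac1{|G|}\sum_g g$, so its trace gives
$$
c_k=\frac1{|G|}\sum_{g\in G}\chi_{\Sym^kL}(g)\,\chi_K(g).
$$
If one prefers to keep the dual, the factor is $\chi_{\Sym^kL}(g^{-1})$. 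Reindexing $g\mapsto g^{-1}$ and using that real characters satisfy $\chi(g^{-1})=\chi(g)$ removes the issue.

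Next I multiply by $t^k$ and sum over $k$. This reduces the claim to the pointwise identity
$$
\sum_{k\ge0}\chi_{\Sym^kL}(g)\,t^k=\frac{1}{\det(I-t\rho_L(g))}
$$
as formal power series, for each fixed $g$. To prove it, complexify and diagonalise $\rho_L(g)$; this is possible because $g$ has finite order. Let the eigenvalues be $\lambda_1,\dots,\lambda_m$ with eigenbasis $e_i$. The monomials $e^\alpha$ with $|\alpha|=k$ form an eigenbasis of $\Sym^k(L_\C)$ with eigenvalues $\lambda^\alpha$. Hence $\chi_{\Sym^kL}(g)=h_k(\lambda_1,\dots,\lambda_m)$, the complete homogeneous symmetric polynomial. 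The generating function of the $h_k$ is $\prod_i(1-\lambda_it)^{-1}=\det(I-t\rho_L(g))^{-1}$. This gives the first equality.

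For the second equality it suffices, again for each fixed $g$, to show
$$
-\log\det(I-t\rho_L(g))=\sum_i-\log(1-\lambda_it)=\sum_{m\ge1}\frac{t^m}{m}\sum_i\lambda_i^m .
$$
The right-hand side equals $\sum_{m\ge1}\frac{t^m}{m}\chi_L(g^m)$, since $\lambda_i^m$ are the eigenvalues of $\rho_L(g^m)$. Exponentiating gives the claimed formula, which is Newton's identity relating complete and power-sum symmetric functions. All identities here are in $\C[[t]]$ with constant term $1$, so $\log$ and $\exp$ are the formal inverse bijections and no convergence questions arise. As a check, both sides have real coefficients.

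There is no serious obstacle. The one point needing care is the character of $\Sym^kL$: the eigenbasis description needs the complexification, together with the observation that $\Sym^k(L_\C)=(\Sym^kL)\otimes\C$, so traces are unchanged. The other point is the dual versus non-dual bookkeeping in the averaging formula, which reality of characters disposes of.
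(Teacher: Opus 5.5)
Your proposal is correct and follows essentially the same route as the paper's appendix proof. Both arguments write $c_k=\frac1{|G|}\sum_{g}\chi_{\Sym^kL}(g)\chi_K(g)$ using reality of the characters, diagonalise $\rho_L(g)$ over $\C$ to get $\sum_k\chi_{\Sym^kL}(g)t^k=\det(I-t\rho_L(g))^{-1}$, and take logarithms to obtain the power-map form; you simply add detail the paper leaves implicit (the complete homogeneous polynomials, the dual-versus-non-dual bookkeeping, and the formal power series justification).
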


This is the usual character average; see \cref{app:molien}. The exponential form is useful computationally because it uses only the power-map characters $\chi_L(g^m)$, which are fixed-point counts for permutation representations. We use this form for subset representations in \cref{thm:molien-polya}.

When $K$ is irreducible, the coefficient $c_k$ need not equal the multiplicity of $K$ in $\Sym^kL$ over $\R$, but rather
$$
c_k = [\Sym^kL:K]\, \dim_\R\End_G(K).
$$

In small representations one can often read the degree set directly from the symmetric powers. The next two examples illustrate \cref{thm:detection,prop:degree-parity}.

\begin{example}[One-dimensional sources]\label{ex:one-dim}
Suppose $\dim L=1$. Then $\Sym^kL=L^{\otimes k}$.
If $\chi$ is the real character of $L$, its order is
$q\in\{1,2\}$. For the one-dimensional target $\chi^a$,
$a\in\{0,1\}$,
$$
D(L,\chi^a) = \{k\ge0:k\equiv a\pmod q\}.
$$
Thus a trivial source reaches the trivial target in every degree, while a source of order two reaches the trivial target in even degrees and itself in odd degrees.
\end{example}

\begin{example}[Rotation representations of $C_n$]\label{ex:cyclic}
Let $C_n=\langle a\rangle$, and let $\rho_s$ and $\rho_t$
be irreducible two-dimensional real rotation representations, with
$2s,2t\not\equiv0\pmod n$. Writing $(\rho_s)_\C\cong W_s\oplus W_{-s}$,
define
$$
R_k=\{r\in\Z:|r|\le k,\ r\equiv k\pmod2\}.
$$
Then $\Sym^k((\rho_s)_\C) \cong \bigoplus_{r\in R_k}W_{rs}$,
and therefore
$$
D(\rho_s,\rho_t) = \{k\ge0:\exists r\in R_k \text{ with }rs\equiv\pm t\pmod n\}. 
$$
This set is nonempty exactly when $\gcd(s,n)\mid t$, or 
equivalently when $\ker\rho_s\subseteq\ker\rho_t$. Since $R_k\subseteq R_{k+2}$, occurrence persists within each parity, as predicted by \cref{prop:degree-parity}.
\end{example}

\vspace{0.7cm}
\section{Coordinatewise nonlinearities}\label{sec:coordinatewise}

The preceding sections determine which equivariant polynomial maps
exist. We now ask which of these maps a coordinatewise layer produces.

The input representation $L$ is arbitrary throughout and need not
be irreducible or multiplicity-free. In particular, the results
include interactions between components of a direct-sum input,
without requiring a separate treatment of those components.

\subsection{Coordinatewise layers and their degree components}
\label{sec:act-setting}
There are two pieces of data to track: the polynomial produced by the hidden coordinates, and the scalar Hermite coefficient supplied by the activation. We introduce them separately.

Let $L$ and $K$ be finite-dimensional real orthogonal
$G$-representations, regarded as the input and output spaces. Coordinatewise application is equivariant only after choosing coordinates that the group permutes. A \emph{hidden representation} is an intermediate representation $H=\R^{\cB}$ with an orthonormal basis $\cB$ permuted by $G$. Throughout this
section we assume that the action on $\cB$ is transitive.

No inclusion relations between $L$, $H$, and $K$ are assumed. They are connected by equivariant linear maps
$$ 
A:L\longrightarrow H, \qquad R:H\longrightarrow K.
$$
Thus a coordinatewise layer has the form
$$
L\xrightarrow{A}H
\xrightarrow{\sigma_{\cB}}H
\xrightarrow{R}K,
$$
where the nonlinearity is applied in the distinguished coordinate basis
of $H$.

All adjoints are taken with respect to the fixed inner products.
Hidden representations with several coordinate orbits are treated in
\cref{app:intransitive}.

\begin{definition}
For an activation $\sigma:\R\to\R$, its
\emph{coordinatewise extension} to $H$ is
$$
\sigma_{\cB}\left(\sum_{b\in\cB}z_bb\right)
=
\sum_{b\in\cB}\sigma(z_b)b.
$$
The associated \emph{coordinatewise layer} is
$$
F:L\longrightarrow K,
\qquad
F(x)=R\sigma_{\cB}(Ax).
$$
\end{definition}

Since $\sigma_{\cB}$ commutes with permutations of $\cB$,
the layer $F$ is equivariant. We assume $A\ne0$.

\begin{definition}
For $b\in\cB$, define the coordinate vector $a_b=A^*b\in L$. Thus the $b$th hidden coordinate is
$(Ax)_b=\langle x,a_b\rangle$.
The \emph{coordinate Gram matrix} is
$$
C=(C_{bb'})_{b,b'\in\cB},
\qquad
C_{bb'}=\langle a_b,a_{b'}\rangle.
$$
Thus $C$ is the matrix of $AA^*$ in the basis $\cB$.
\end{definition}

\begin{lemma}
There is an $s>0$ such that $\norm{a_b}=s$ for every $b\in\cB$.
Moreover,
$$
s^2=C_{bb}=\frac{\tr C}{\dim H}.
$$
For $X\sim\gamma_L$, every coordinate $(AX)_b$ therefore
has distribution $N(0,s^2)$.
\end{lemma}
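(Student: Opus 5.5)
The plan is to use equivariance of $A$ together with transitivity of the action on $\cB$, and then read off the distributional claim from the Gaussian structure of $X$.

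\emph{Step 1: the vectors $a_b$ are permuted by $G$.} Since $G$ acts orthogonally on $L$ and $H$, the adjoint $A^*:H\to L$ is again $G$-equivariant. Indeed, $\langle A^*gh,x\rangle=\langle gh,Ax\rangle=\langle h,g^{-1}Ax\rangle=\langle h,Ag^{-1}x\rangle=\langle gA^*h,x\rangle$. Therefore, for every $g\in G$ and $b\in\cB$,
$$
a_{gb}=A^*(gb)=gA^*b=ga_b .
$$
Since $g$ acts orthogonally on $L$, this gives $\norm{a_{gb}}=\norm{a_b}$.

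\emph{Step 2: all norms agree.} Transitivity of the action on $\cB$ lets us reach any $b'$ from any $b$ by some $g$. Hence $\norm{a_b}$ is a common value $s\ge0$. Moreover $C_{bb}=\langle a_b,a_b\rangle=s^2$, so summing over $\cB$ gives $\tr C=|\cB|\,s^2=(\dim H)\,s^2$. For positivity, note that $A\ne0$ implies $A^*\ne0$. Since $\cB$ is a basis of $H$, some $a_b=A^*b$ must be nonzero, and therefore $s>0$.

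\emph{Step 3: distribution of the coordinates.} Let $X\sim\gamma_L$. Then $(AX)_b=\langle AX,b\rangle=\langle X,a_b\rangle$ is a linear functional of a standard Gaussian vector, so it is centred normal with variance $\norm{a_b}^2=s^2$. This uses only the rotation invariance of $\gamma_L$, or equivalently a direct computation in an orthonormal basis of $L$.

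There is no real obstacle in this argument. The only point that needs care is the equivariance of $A^*$, which relies on the inner products being $G$-invariant. That invariance is part of the standing assumptions of \cref{sec:prelim}.
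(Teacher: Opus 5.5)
Your proof is correct and follows the paper's argument step for step: equivariance and orthogonality give $a_{gb}=ga_b$, transitivity together with $A\ne0$ gives a common positive norm, summing the diagonal of $C$ gives the trace identity, and $\langle X,a_b\rangle$ is centred Gaussian with variance $\norm{a_b}^2$. The only difference is that you write out explicitly why $A^*$ is equivariant, which the paper leaves implicit.
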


\begin{proof}
Equivariance and orthogonality give $a_{gb}=ga_b$.
Transitivity implies that all coordinate vectors have the
same norm, and $A\ne0$ implies that this norm is positive.
Summing the diagonal entries of $C$ gives the trace identity.
Finally, $\langle X,a_b\rangle$ is centred Gaussian with
variance $\norm{a_b}^2$.
\end{proof}

The activation enters the $k$th chaos through a single scalar coefficient.

\begin{definition}
Suppose $\sigma(sZ)$ is square-integrable for $Z\sim N(0,1)$.
Its \emph{Hermite coefficients at scale $s$} are
$$
a_k(\sigma;s)
=
\frac1{k!}\E[\sigma(sZ)\He_k(Z)],
\qquad
a_k(\sigma):=a_k(\sigma;1).
$$
\end{definition}

We impose this integrability assumption throughout.
It holds for every continuous piecewise-linear activation
with finitely many knots, since such a function has at most
linear growth.

The remaining degree-$k$ information comes from the fixed lift and readout. It is encoded by the coordinatewise $k$th power.

\begin{definition}
For $z=\sum_bz_bb\in H$, define
$$
z^{\odot k}:=\sum_bz_b^k b\quad(k\ge0).
$$
The \emph{degree-$k$ coordinate polynomial} associated with
$A$ and $R$ is
$$
Q_k:L\longrightarrow K,
\qquad
Q_k(x)
=
R\bigl((Ax)^{\odot k}\bigr)
=
\sum_{b\in\cB}\langle x,a_b\rangle^k Rb.
$$
For matrices, $\circ$ denotes entrywise multiplication,
and $C^{\circ k}$ denotes the entrywise power, with
$C^{\circ0}=J$, the all-ones matrix.
\end{definition}

Coordinatewise powers commute with permutations, so
$Q_k\in\Poly_k(L,K)^G$. On homogeneous degree-$k$ polynomials, $\Pi_k$ is an isomorphism onto $\cH_k(L)$. In particular,
$$
\Pi_kQ_k=0
\quad\Longleftrightarrow\quad
Q_k=0.
$$
All $L^2$ norms below use the standard Gaussian on the
source and the fixed inner product on the target.

With these definitions, the activation and the coordinate geometry separate as follows.

\begin{theorem}[Coordinatewise factorisation and Gram formula]
\label{thm:master}
Under the preceding assumptions, for every $k\ge0$,
\begin{align}
\Pi_kF
&=
\frac{a_k(\sigma;s)}{s^k}\,\Pi_kQ_k,
\label{eq:master}\\[4pt]
\norm{\Pi_kF}_{L^2}^2
&=
\frac{k!\,|a_k(\sigma;s)|^2}{s^{2k}}
\tr\bigl(RC^{\circ k}R^*\bigr).
\notag
\end{align}
Consequently,
$$
\begin{aligned}
\Pi_kF\ne0
&\quad\Longleftrightarrow\quad
a_k(\sigma;s)\ne0
\ \text{and}\ Q_k\ne0,\\
Q_k\ne0
&\quad\Longleftrightarrow\quad
\tr(RC^{\circ k}R^*)>0.
\end{aligned}
$$
No irreducibility or multiplicity-free assumption is required.
\end{theorem}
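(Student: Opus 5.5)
The plan is to work one hidden coordinate at a time and then sum with the readout. For $b\in\cB$ put $u_b=a_b/s$, a unit vector, so that $\langle X,u_b\rangle\sim N(0,1)$. Expand the scalar function $\sigma(sz)=\sum_j a_j(\sigma;s)\He_j(z)$ in $L^2(N(0,1))$. Composing with the linear form $x\mapsto\langle x,u_b\rangle$ gives the expansion
$$\sigma(\langle x,a_b\rangle)=\sum_j a_j(\sigma;s)\He_j(\langle x,u_b\rangle)$$
in $L^2(\gamma_L)$. This is an isometric pullback, because the law of $\langle X,u_b\rangle$ is standard normal. Each term $\He_j(\langle x,u_b\rangle)$ lies in $\cH_j(L)$: rotate coordinates so that $u_b$ is the first basis vector, and use the $O(L)$-invariance of the chaos decomposition \eqref{eq:chaos}. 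Therefore
$$\Pi_k\sigma(\langle x,a_b\rangle)=a_k(\sigma;s)\He_k(\langle x,u_b\rangle).$$

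By the identity $\Pi_k(\ell^k)=s^k\He_k(\ell/s)$ recorded in \cref{sec:chaos}, applied to $\ell(x)=\langle x,a_b\rangle$, we have $\He_k(\langle x,u_b\rangle)=s^{-k}\Pi_k(\langle x,a_b\rangle^k)$. Now $F(x)=\sum_b\sigma(\langle x,a_b\rangle)Rb$, and $\Pi_k$ acts linearly on $K$-valued maps, commuting with the fixed vectors $Rb$. Hence
$$\Pi_kF=\frac{a_k(\sigma;s)}{s^k}\sum_b\Pi_k(\langle x,a_b\rangle^k)Rb=\frac{a_k(\sigma;s)}{s^k}\Pi_kQ_k,$$
which is \eqref{eq:master}.

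For the norm, write $\Pi_kQ_k=s^k\sum_b\He_k(\langle x,u_b\rangle)Rb$ and expand
$$\E\|\Pi_kQ_k(X)\|^2=s^{2k}\sum_{b,b'}\E\bigl[\He_k(\langle X,u_b\rangle)\He_k(\langle X,u_{b'}\rangle)\bigr]\langle Rb,Rb'\rangle.$$
The pair $(\langle X,u_b\rangle,\langle X,u_{b'}\rangle)$ is jointly standard normal with correlation $\rho=C_{bb'}/s^2$. Mehler's identity \cref{lem:mehler} therefore gives each expectation as $k!\,C_{bb'}^k/s^{2k}$. The total is
$$k!\sum_{b,b'}C_{bb'}^k\langle R^*Rb',b\rangle=k!\,\tr(C^{\circ k}R^*R)=k!\,\tr(RC^{\circ k}R^*),$$
using that $C^{\circ k}$ is symmetric. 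For $k=0$ the convention $C^{\circ0}=J$ is consistent with $\rho^0=1$. Multiplying by $|a_k(\sigma;s)|^2/s^{2k}$ gives the second formula.

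The equivalences then follow directly. Since $\Pi_k$ is injective on homogeneous degree-$k$ polynomials, $\Pi_kQ_k\ne0$ if and only if $Q_k\ne0$. This in turn holds if and only if $\|\Pi_kQ_k\|^2>0$, that is, $\tr(RC^{\circ k}R^*)>0$. The condition $\Pi_kF\ne0$ is equivalent to both factors in \eqref{eq:master} being nonzero. No irreducibility is used anywhere.

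The main obstacle is rigorously justifying the termwise chaos projection of $\sigma(\langle x,a_b\rangle)$. This requires checking that pulling back a one-variable Hermite expansion along a unit linear form lands in the correct multivariate chaos and converges in $L^2(\gamma_L)$. I would handle this by completing $u_b$ to an orthonormal basis of $L$, so that the multivariate Hermite basis contains $\He_j(x_1)$ directly.
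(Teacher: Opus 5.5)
Your proof is correct and follows the paper's argument essentially step for step. You expand each hidden coordinate $\sigma(\langle x,a_b\rangle)$ in Hermite polynomials of $\langle x,a_b\rangle/s$, use $\Pi_k(\ell^k)=s^k\He_k(\ell/s)$ and sum against $Rb$ to get the factorisation, apply Mehler's identity with $\rho=C_{bb'}/s^2$ for the Gram formula, and conclude via injectivity of $\Pi_k$ on homogeneous degree-$k$ polynomials. Your rotation argument (taking $u_b$ as a coordinate vector) together with the isometric pullback makes explicit the chaos-membership and $L^2$-convergence step, which the paper asserts without comment.
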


The transitivity assumption is used only to give all hidden coordinates the same variance. The corresponding formula for several coordinate orbits is recorded in \cref{app:intransitive}.

\begin{proof}
For each $b$, the random variable $\langle X,a_b\rangle/s$
is standard normal. Hence
$$
\sigma(\langle x,a_b\rangle)
=
\sum_{k\ge0}
a_k(\sigma;s)\He_k(\langle x,a_b\rangle/s)
$$
in $L^2(\gamma_L)$. The $k$th summand belongs to $\cH_k(L)$,
and
$$
\Pi_k\bigl(\langle x,a_b\rangle^k\bigr)
=
s^k\He_k(\langle x,a_b\rangle/s).
$$
Multiplying by $Rb$ and summing over $b$ proves
\eqref{eq:master}.

Applying \cref{lem:mehler} to the hidden coordinates gives
$$
\left\langle
\Pi_k\bigl(\langle x,a_b\rangle^k\bigr),
\Pi_k\bigl(\langle x,a_{b'}\rangle^k\bigr)
\right\rangle_{L^2}
=
k!C_{bb'}^k.
$$
Therefore
$$
\norm{\Pi_kQ_k}_{L^2}^2
=
k!\sum_{b,b'}C_{bb'}^k\langle Rb,Rb'\rangle
=
k!\tr(RC^{\circ k}R^*).
$$
Combining this identity with \eqref{eq:master} proves
the norm formula. The nonvanishing statements follow
from injectivity of $\Pi_k$ on homogeneous degree-$k$
polynomials.
\end{proof}
The theorem separates the three questions~\ref{question:symmetry}--\ref{question:activation} of the introduction. For fixed $A$ and $R$, changing the activation only changes the scalar multiplying $\Pi_kQ_k$; it cannot select a different polynomial direction at degree $k$.

\begin{example}
\label{ex:S3-relu}
Let $H=\R^3$ be the $S_3$ permutation representation, and recall the trivial and standard subrepresentations, $L_0$ and $L_1$.

Let $A:L_1\hookrightarrow H$ be inclusion, and
$R=p_{L_0}$ be orthogonal projection. Then $C=p_{L_1}$ and
$s^2=\dim L_1/3=2/3$. As in \eqref{eq:intro-square},
$$
Q_2(x)=\frac13\norm{x}^2\one,
\qquad
\Pi_2Q_2(x)=\frac13(\norm{x}^2-2)\one.
$$
Here $\E[\norm{X}^2]=2$ for $X\sim\gamma_{L_1}$.
Thus
$$
\Pi_2F(x)
=
\frac{a_2(\sigma;\sqrt{2/3})}{2}
(\norm{x}^2-2)\one.
$$
The coordinate polynomial is nonzero, so this second-chaos
interaction is present precisely when
$a_2(\sigma;\sqrt{2/3})\ne0$.
\end{example}

\subsection{The submodule generated by the coordinates}

We next explain why the polynomial $Q_k$ can vanish even
when $\Hom_G(\Sym^kL,K)$ is nonzero.

\begin{definition}
For $k\ge0$, define
$$
Y_k
=
\Span\{a_b^k:b\in\cB\}
\subseteq\Sym^kL.
$$
We call $Y_k$ the \emph{coordinate-generated submodule}
of degree $k$. Define also
$$
\phi_k:H\longrightarrow\Sym^kL,
\qquad
\phi_k(b)=a_b^k.
$$
\end{definition}

The relation $a_{gb}=ga_b$ implies that $\phi_k$ is equivariant.
Thus $Y_k=\operatorname{im}\phi_k$ is indeed a submodule.

\begin{proposition}
\label{prop:coordinate-image}
For every $k\ge0$, $\phi_k^*\phi_k=C^{\circ k}$, and so
$$
Y_k\cong(\ker C^{\circ k})^\perp
$$
as $G$-representations. \Cref{prop:poly-sym} gives $Q_k(x)=R\phi_k^*(x^k)$. Thus
$$
Q_k\ne0
\quad\Longleftrightarrow\quad
R\phi_k^*\ne0.
$$
\end{proposition}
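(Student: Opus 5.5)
The statement splits into three claims: the Gram identity, the identification of $Y_k$, and the factorisation of $Q_k$ through $\phi_k^*$. The whole argument rests on the inner product on $\Sym^kL$ fixed in \cref{sec:prelim}, namely $\ip{x^k}{y^k}=\ip{x}{y}^k$. I will prove the three claims in that order.

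For the Gram identity, I compute matrix entries in the orthonormal basis $\cB$:
$$
\ip{b}{\phi_k^*\phi_k b'}=\ip{\phi_k b}{\phi_k b'}=\ip{a_b^k}{a_{b'}^k}=\ip{a_b}{a_{b'}}^k=C_{bb'}^k.
$$
This gives $\phi_k^*\phi_k=C^{\circ k}$. For $k=0$, both sides equal the all-ones matrix $J$, because $a_b^0=1\in\Sym^0L=\R$.

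For the isomorphism $Y_k\cong(\ker C^{\circ k})^\perp$, I first note that $\ker\phi_k=\ker\phi_k^*\phi_k=\ker C^{\circ k}$. This holds because $\ip{\phi_k^*\phi_k h}{h}=\norm{\phi_k h}^2$. Since $\phi_k$ is equivariant (as noted before the statement), $\ker\phi_k$ is a subrepresentation. Its orthogonal complement is then also a subrepresentation, because the inner product on $H$ is invariant: $G$ permutes the orthonormal basis $\cB$. Restricting $\phi_k$ to $(\ker\phi_k)^\perp$ gives an injective equivariant map onto $\operatorname{im}\phi_k=Y_k$, and hence a $G$-isomorphism $(\ker C^{\circ k})^\perp\cong Y_k$.

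For the formula for $Q_k$, I compute $\phi_k^*(x^k)\in H$ coordinatewise:
$$
\ip{\phi_k^*(x^k)}{b}=\ip{x^k}{a_b^k}=\ip{x}{a_b}^k=(Ax)_b^k.
$$
So $\phi_k^*(x^k)=(Ax)^{\odot k}$, and therefore $Q_k(x)=R\phi_k^*(x^k)$. In the language of \cref{prop:poly-sym}, $Q_k$ is the polynomial corresponding to $T=R\phi_k^*\in\Hom_G(\Sym^kL,K)$. That proposition is an isomorphism $T\mapsto(x\mapsto T(x^k))$; equivalently, pure powers span $\Sym^kL$. Hence $Q_k=0$ if and only if $R\phi_k^*=0$.

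The main obstacle is a conventions check rather than a genuine difficulty: the adjoint $\phi_k^*$ must be taken with respect to the induced inner product on $\Sym^kL$. That inner product is exactly the one characterised by $\ip{x^k}{y^k}=\ip{x}{y}^k$, so no factors of $k!$ appear, and the argument needs nothing beyond the preliminaries.
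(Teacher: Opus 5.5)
Your proposal is correct and follows essentially the same route as the paper. Both arguments compute the Gram matrix of the $a_b^k$ via $\langle x^k,y^k\rangle=\langle x,y\rangle^k$, deduce $\ker\phi_k=\ker C^{\circ k}$, restrict $\phi_k$ to the orthogonal complement of its kernel, and use that pure powers span $\Sym^kL$. You also spell out two points the paper leaves implicit: that $(\ker C^{\circ k})^\perp$ is $G$-stable, and that $\phi_k^*(x^k)=(Ax)^{\odot k}$.
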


\begin{proof}
The tensor inner product gives
$$
\langle a_b^k,a_{b'}^k\rangle
=
\langle a_b,a_{b'}\rangle^k
=
C_{bb'}^k.
$$
Hence $C^{\circ k}$ is the Gram matrix of the vectors
$a_b^k$, proving
$\phi_k^*\phi_k=C^{\circ k}$. In particular,
$\ker\phi_k=\ker C^{\circ k}$. Restricting $\phi_k$
to the orthogonal complement of its kernel gives
the stated isomorphism onto $Y_k$.

For $z\in\Sym^kL$,
$$
R\phi_k^*z
=
\sum_b\langle z,a_b^k\rangle Rb.
$$
Taking $z=x^k$ gives $Q_k(x)$.
Since pure powers span $\Sym^kL$, the polynomial vanishes
identically if and only if $R\phi_k^*$ is zero.
\end{proof}

An arbitrary degree-$k$ equivariant polynomial may use all of $\Sym^kL$, whereas the coordinate construction sees only $Y_k$. A target can therefore occur in $\Sym^kL$ and still be invisible to the chosen coordinates. Even when it occurs in $Y_k$, the readout $R$ may kill the resulting direction.

\subsection{Projection onto irreducible summands}
\label{sec:krein}
When the lift is an inclusion and the readout is an orthogonal projection, the Gram criterion becomes especially transparent. Let $W,K\subseteq H$ be nonzero subrepresentations and take
$$
A:W\hookrightarrow H,
\qquad
R=p_K:H\longrightarrow K.
$$
Writing $E_W$ and $E_K$ for the orthogonal projector
matrices in $\cB$, we have
$$
C=E_W,
\qquad
s^2=\frac{\dim W}{\dim H},
\qquad
Q_k(x)=p_K(x^{\odot k})\quad(x\in W).
$$
The norm formula becomes
$$
\norm{\Pi_kQ_k}_{L^2(\gamma_W;K)}^2
=
k!\,\tr(E_KE_W^{\circ k}),
$$
which holds without a multiplicity-free assumption.

Suppose now that $H$ is multiplicity-free,
$H=\bigoplus_{j=0}^mL_j$ with the $L_j$ pairwise non-isomorphic and irreducible.
Let $L_0$ be the trivial summand, let $E_j$ be orthogonal
projection onto $L_j$, and let $d_j=\dim L_j$.
Transitivity gives
$$
E_0=\frac{J}{\dim H},
\qquad
(E_j)_{bb}=\frac{d_j}{\dim H}.
$$

When $H$ is multiplicity-free, the matrix $E_W^{\circ k}$ is determined by one scalar on each irreducible summand. We record these scalars next.

\begin{definition}
For the fixed source subrepresentation $W$ (not necessarily irreducible), define
$$
\kappa_j(k)
:=
\frac1{d_j}\tr(E_jE_W^{\circ k}),
\qquad
k\ge0.
$$
We call $\kappa_j(k)$ the \emph{projector coefficient}
for the source $W$, target $L_j$ and degree $k$.
\end{definition}

\begin{corollary}
\label{thm:krein}
For each $k\ge0$:

\begin{enumerate}
    \item The Hadamard power of the projector decomposes as
    $$
    E_W^{\circ k}
    =
    \sum_{j=0}^m \kappa_j(k)E_j,
    \qquad
    \kappa_j(k)\ge0.
    $$

    \item For $Q_k(x)=p_j(x^{\odot k})$ with $x\in W$,
    $$
    Q_k\ne0
    \quad\Longleftrightarrow\quad
    \kappa_j(k)>0, \qquad
    \norm{\Pi_kQ_k}_{L^2}^2
    =
    k!\,d_j\,\kappa_j(k).
    $$

    \item The coordinate-generated submodule for the inclusion
    $W\hookrightarrow H$ is
    $$
    Y_k
    \cong
    \bigoplus_{j:\,\kappa_j(k)>0}L_j.
    $$
\end{enumerate}
\end{corollary}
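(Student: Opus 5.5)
The plan is to show that $E_W^{\circ k}$ is an equivariant positive semidefinite operator on $H$. Schur's lemma in the multiplicity-free setting then forces it to be a combination of the $E_j$, and (b), (c) follow from \cref{thm:master} and \cref{prop:coordinate-image}.

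\emph{Step 1: equivariance and positivity.} The group $G$ acts on $\cB$ by permutation matrices $P_g$. Since $W$ is $G$-stable, $P_gE_WP_g^{-1}=E_W$, so $(E_W)_{gb,gb'}=(E_W)_{bb'}$. Entrywise powers respect this invariance, so $E_W^{\circ k}$ commutes with every $P_g$. By the Schur product theorem $E_W^{\circ k}$ is positive semidefinite; alternatively, \cref{prop:coordinate-image} gives $E_W^{\circ k}=\phi_k^*\phi_k$ directly, with $C=E_W$. For $k=0$ we get $J=\dim H\cdot E_0$, which is consistent with the formula.

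\emph{Step 2: Schur decomposition.} Because $H=\bigoplus_jL_j$ is multiplicity-free, any $T\in\End_G(H)$ maps each $L_j$ into $L_j$. Its restriction lies in $\End_G(L_j)$, which may be $\R$, $\C$ or $\mathbb H$ rather than just scalars.

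This is the one genuine obstacle. The fix is to use symmetry: $E_W^{\circ k}$ is a real symmetric matrix, so its restriction $T_j$ to $L_j$ is self-adjoint and $G$-equivariant. The eigenspaces of $T_j$ are then $G$-stable subspaces of the irreducible $L_j$, so $T_j$ has a single eigenvalue and is a real scalar $\lambda_jI$. Hence $E_W^{\circ k}=\sum_j\lambda_jE_j$. Taking $\tr(E_j\,\cdot\,)$ of both sides gives $\lambda_jd_j=\tr(E_jE_W^{\circ k})$, so $\lambda_j=\kappa_j(k)$. Positive semidefiniteness gives $\kappa_j(k)\ge0$. This proves (a).

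\emph{Step 3: parts (b) and (c).} In the notation of \cref{sec:krein}, with $R=p_j$, we have $RC^{\circ k}R^*=p_jE_W^{\circ k}p_j^*$, and its trace is $\tr(E_jE_W^{\circ k})=d_j\kappa_j(k)$. \Cref{thm:master} then gives both the norm formula and the criterion $Q_k\ne0\iff\kappa_j(k)>0$.

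For (c), \cref{prop:coordinate-image} gives $Y_k\cong(\ker E_W^{\circ k})^\perp$. By (a) this orthogonal complement is the image of $E_W^{\circ k}$, namely $\bigoplus_{j:\kappa_j(k)>0}L_j$.
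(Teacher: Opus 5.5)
Your proposal is correct and follows the paper's proof essentially step for step. Both arguments show that $E_W^{\circ k}$ is a self-adjoint $G$-endomorphism, so Schur's lemma plus self-adjointness gives real scalars on each $L_j$, and traces against $E_j$ identify those scalars as $\kappa_j(k)$. Nonnegativity comes from the Schur product theorem, and parts (b) and (c) come from the Gram formula of \cref{thm:master} and from \cref{prop:coordinate-image}. Your eigenspace argument for why the scalar on each $L_j$ is real spells out a step the paper states in a single line.
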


\begin{proof}
Hadamard products of $G$-equivariant matrices are again equivariant. Thus $E_W^{\circ k}$ is a self-adjoint $G$-endomorphism of $H=\bigoplus_{j=0}^m L_j$.
By multiplicity-freeness and Schur's lemma, together with
self-adjointness, it acts on each $L_j$ by a real scalar. Hence $E_W^{\circ k} = \sum_{j=0}^m \lambda_j(k)E_j$. Multiplying by $E_j$ and taking traces proves $E_W^{\circ k} = \sum_{j=0}^m\kappa_j(k)E_j$.

Since $E_W$ is positive semidefinite, the Schur product theorem implies
that $E_W^{\circ k}$ is positive semidefinite. Its scalar on $L_j$ is
therefore nonnegative, so $\kappa_j(k)\ge0$.

For the inclusion $W\hookrightarrow H$ and projection
$p_j:H\to L_j$, the norm formula gives
$$
\norm{\Pi_kQ_k}_{L^2}^2
=
k!\,\tr(E_jE_W^{\circ k})
=
k!\,d_j\kappa_j(k).
$$
Since $\Pi_k$ is injective on homogeneous degree-$k$ polynomials,
$$
Q_k\ne0
\quad\Longleftrightarrow\quad
\kappa_j(k)>0.
$$

Finally, \cref{prop:coordinate-image} gives $Y_k\cong(\ker E_W^{\circ k})^\perp$. Since $E_W^{\circ k}$ acts on $L_j$ by $\kappa_j(k)$, its kernel is
the sum of those $L_j$ with $\kappa_j(k)=0$. Hence
$$
Y_k
\cong
\bigoplus_{j:\,\kappa_j(k)>0}L_j.
$$
\end{proof}

When $W$ is an irreducible summand and $k=2$, these Hadamard coefficients are the usual Krein parameters, the structure constants for entrywise products of primitive idempotents, up to the chosen normalisation. Higher $k$ gives iterated Hadamard products. See also \cref{app:subset-representations} for the Johnson-scheme case.

\begin{lemma}
\label{lem:parity}
For each $k \geq 0$,
$$\kappa_j(k)>0 \implies \kappa_j(k+2)>0.$$

Equivalently, if $p_j(x^{\odot k})|_W$ is nonzero, then
$p_j(x^{\odot(k+2)})|_W$ is nonzero. Hence, for each $j$, the supported degrees form at most one even tail and one odd tail.
\end{lemma}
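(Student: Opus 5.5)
Multiplying by $\norm{x}^2$ is the natural move, as in \cref{prop:degree-parity}. The extra ingredient is that the squared norm on $W$ must be expressible through the hidden coordinates. We have $C=E_W$, so $a_b=E_W b$ and
$$
\sum_{b\in\cB}\langle x,a_b\rangle^2=\norm{E_W x}^2=\norm{x}^2 \qquad (x\in W).
$$
The plan is to work at the level of Hadamard matrices rather than polynomials. By \cref{thm:krein}, $\kappa_j(k)>0$ is equivalent to $E_jE_W^{\circ k}\ne0$, that is, to $E_W^{\circ k}$ acting on $L_j$ by a positive scalar.

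\emph{Step 1.} Write $E_W^{\circ(k+2)}=E_W^{\circ k}\circ E_W^{\circ 2}$. Then use positivity, via a Schur-product lower bound, to show that the scalar on $L_j$ stays positive. A direct route is through $Y_k$. By \cref{thm:krein}(c), $\kappa_j(k)>0$ means $L_j\subseteq(\ker C^{\circ k})^\perp$, so it suffices to show
$$
\ker C^{\circ(k+2)}\subseteq\ker C^{\circ k}.
$$

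\emph{Step 2.} Prove this inclusion with the Gram description of \cref{prop:coordinate-image}. For $v\in H$ we have $v\in\ker C^{\circ m}$ exactly when $\phi_m(v)=\sum_b v_b a_b^m=0$ in $\Sym^mL$. Dually, this holds exactly when $\sum_b v_b\langle x,a_b\rangle^m=0$ as a polynomial in $x$, since pure powers span. Suppose $\sum_b v_b\langle x,a_b\rangle^{k+2}\equiv0$. Apply the Laplacian in $x$ on $W$. This gives
$$
(k+2)(k+1)\sum_b v_b\norm{a_b}^2\langle x,a_b\rangle^{k}=0.
$$
All $\norm{a_b}^2=s^2>0$ by the lemma on coordinate norms. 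Hence $\sum_b v_b\langle x,a_b\rangle^k\equiv0$, so $v\in\ker C^{\circ k}$.

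\emph{Step 3.} Translate back. Step 2 gives $\kappa_j(k)>0\Rightarrow\kappa_j(k+2)>0$. The equivalent polynomial formulation follows from \cref{thm:krein}(b), and the tail statement follows by induction within each parity class.

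The only real point is Step 2. My first idea, multiplying by $\norm{x}^2$, does not obviously commute with $p_j$ applied to coordinatewise powers. Differentiating instead of multiplying avoids this. The Laplacian lowers degree by two, and equal coordinate norms (transitivity) make the factor $\norm{a_b}^2$ uniform, so it can be pulled out of the sum.
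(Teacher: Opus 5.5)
Your proof is correct, and its key step (Step~2) takes a genuinely different route from the paper.

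The Schur-product sentence in your Step~1 is the paper's argument, but you leave it unexecuted. Its missing ingredient is that the trivial component of $E_W^{\circ2}$ is $\kappa_0(2)E_0=\frac{\dim W}{(\dim H)^2}J$. Hence $E_W^{\circ2}=\frac{\dim W}{(\dim H)^2}J+B$ with $B$ positive semidefinite, and the Schur product theorem then gives $\kappa_j(k+2)\ge\frac{\dim W}{(\dim H)^2}\kappa_j(k)$.

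You instead pass to $\Sym^\bullet W$. Identify $\Sym^mW$ with degree-$m$ polynomials on $W$ via $z\mapsto\langle z,x^m\rangle$. Then the Laplacian satisfies $\Delta\circ\phi_{k+2}=(k+2)(k+1)s^2\,\phi_k$, and this gives $\ker C^{\circ(k+2)}\subseteq\ker C^{\circ k}$.

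\emph{What the paper's route buys.} It gives an explicit constant, and hence, by \cref{thm:krein}, a lower bound on $\norm{\Pi_{k+2}Q_{k+2}}$ in terms of $\norm{\Pi_kQ_k}$.

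\emph{What your route buys.} It is only qualitative, but it never uses the multiplicity-free projector decomposition. It shows that $Y_k$ is isomorphic to a subrepresentation of $Y_{k+2}$. Moreover, $Q_k\ne0$ exactly when $\operatorname{im}R^*\not\subseteq\ker\phi_k$. So your inclusion gives $Q_k\ne0\Rightarrow Q_{k+2}\ne0$ for every transitive coordinatewise layer, with arbitrary equivariant $A$ and $R$.

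\emph{Your worry about multiplying by $\norm{x}^2$.} Let $\omega\in\Sym^2W$ be the invariant element representing $\norm{\cdot}^2$. Up to a positive scalar, $\Delta$ is adjoint to multiplication by $\omega$. Dualising gives $R\phi_{k+2}^*(\omega\,x^k)=s^2Q_k(x)$. So the multiplication argument of \cref{prop:degree-parity} does work once it is read through $\phi_{k+2}^*$.

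The opening identity $\sum_b\langle x,a_b\rangle^2=\norm{x}^2$ is never used and can be dropped.
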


\begin{proof}
For the trivial summand $L_0$,
$$
\kappa_0(2)
=
\tr(E_0E_W^{\circ2})
=
\frac{\dim W}{\dim H}.
$$
Using the decomposition from \cref{thm:krein},
$$
E_W^{\circ2}
=
\frac{\dim W}{\dim H}E_0+B
=
\frac{\dim W}{(\dim H)^2}J+B,
$$
where $B$ is positive semidefinite. Therefore
$$
E_W^{\circ(k+2)}
=
E_W^{\circ k}\circ E_W^{\circ2}
=
\frac{\dim W}{(\dim H)^2}E_W^{\circ k}
+
E_W^{\circ k}\circ B.
$$
The second term is positive semidefinite by the Schur product theorem.
Its projector coefficients are therefore nonnegative, so
$$
\kappa_j(k+2)
\ge
\frac{\dim W}{(\dim H)^2}\kappa_j(k).
$$
Since $W \neq 0$, positivity of $\kappa_j(k)$ implies positivity of $\kappa_j(k+2)$. The equivalence with the coordinate polynomial follows from \cref{thm:krein}.
\end{proof}

This is the coordinate-level analogue of \cref{prop:degree-parity}: within each parity, coordinate support also persists once it appears.

\subsection{Examples of realisation and obstruction}

The first example shows that a permitted quadratic map
can be missed by the coordinate construction.

\begin{example}
\label{ex:strict-coordinate}
Let $S_4$ act on the six unordered pairs of $[4]:=\{1,2,3,4\}$, and let
$H=\R^{\binom{[4]}2}$ be the corresponding permutation
representation. Its standard subrepresentation is
$$
W=
\left\{
x_{ab}=u_a+u_b:
u\in\R^4,\ \sum_a u_a=0
\right\}.
$$
The map $u\mapsto x$ is equivariant and injective, since $u_a=\frac12\sum_{b\ne a}x_{ab}
$ on its image. Thus $W$ is a copy of the Specht module $S^{(3,1)}$. Let $J_c$ be the complementation operator on edge coordinates,
defined by
$$
J_c e_A=e_{[4]\setminus A}.
$$
For $x\in W$, the zero-sum condition gives $x_{A^c}=-x_A$.
Thus $J_c$ acts as $-1$ on $W$, whereas
$$
(x^{\odot2})_{A^c}=(x^{\odot2})_A.
$$
Since $J_c$ is a self-adjoint involution, its two eigenspaces
are orthogonal. Consequently,
$$
p_W(x^{\odot2})=0
\qquad(x\in W).
$$

Nevertheless, the map
$$
u\longmapsto
\left(u_a^2-\frac14\sum_bu_b^2\right)_{a=1}^4
$$
is a nonzero equivariant quadratic map from the standard
vertex representation to itself. Transporting it through
the identification above gives $\Hom_{S_4}(\Sym^2W,W)\ne0$.

Thus symmetry permits a quadratic interaction that the
fixed layer $F(x)=p_W\sigma_{\cB}(x)$ cannot produce
in its second chaos. This holds for every activation
satisfying the integrability assumption, including biased
ReLU, because its coordinate polynomial $Q_2$ is zero.
The same argument applies in every even degree.
\end{example}

Equivalently, the Gram criterion in \cref{thm:master} gives $\tr(E_WE_W^{\circ2})=0$ in this example.

For regular representations of finite abelian groups,
every permitted degree has a nonzero coordinate polynomial.

\begin{proposition}
\label{prop:abelian-realisation}
Let $G$ be finite abelian, and let $H=\R G=\bigoplus_jL_j$ be its real regular representation, equipped with the
regular coordinate basis. Let $W\subseteq H$ be any
nonzero subrepresentation, and let $p_j$ be orthogonal
projection onto $L_j$. Then, for every $k\ge0$,
$$
p_j(x^{\odot k})|_W\not\equiv0
\quad\Longleftrightarrow\quad
\Hom_G(\Sym^kW,L_j)\ne0.
$$
\end{proposition}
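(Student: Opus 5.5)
The forward implication is formal, and I would dispose of it first. The map $x\mapsto p_j(x^{\odot k})$ restricted to $W$ lies in $\Poly_k(W,L_j)^G$, because coordinatewise powers commute with the permutation action of $G$ on its regular basis. If it is nonzero, \cref{prop:poly-sym} gives $\Hom_G(\Sym^kW,L_j)\ne0$.

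For the converse I would complexify and use Fourier analysis on the abelian group $G$. Let $\widehat G$ be its character group. In $\C G$ put $f_\chi=\sum_{g}\overline{\chi(g)}\,e_g$. These vectors are orthogonal, each spans the line on which $G$ acts by $\chi$, and the key property is that coordinatewise multiplication is multiplicative on them: $f_\chi\odot f_\psi=f_{\chi\psi}$. The complexification $W_\C$ is $G$-stable and $\C G$ is multiplicity-free over $\C$, so $W_\C=\bigoplus_{\chi\in S}\C f_\chi$ for a set $S\subseteq\widehat G$ closed under complex conjugation. Similarly $(L_j)_\C=\bigoplus_{\psi\in T_j}\C f_\psi$, where $T_j$ is $\{\psi\}$ or $\{\psi,\bar\psi\}$. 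Writing $x=\sum_{\chi\in S}c_\chi f_\chi$, the $c_\chi$ are linear coordinates on $W_\C$. Expanding the power and collecting by multisets gives
$$
x^{\odot k}=\sum_{\mu}\binom{k}{\mu}\Bigl(\prod_{\chi}c_\chi^{\mu_\chi}\Bigr)f_{\chi^\mu},
$$
where $\mu$ runs over multisets of size $k$ in $S$ and $\chi^\mu=\prod_\chi\chi^{\mu_\chi}$. Applying $p_j$ keeps exactly those $\mu$ with $\chi^\mu\in T_j$. Distinct multisets give distinct monomials in the independent variables $c_\chi$, and the multinomial coefficients are positive. Hence the complexified map is nonzero if and only if some multiset $\mu$ of size $k$ in $S$ satisfies $\chi^\mu\in T_j$.

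On the other side, $\Sym^k W_\C\cong\bigoplus_\mu\C_{\chi^\mu}$, indexed by the same multisets. So $\Hom_G(\Sym^kW,L_j)\otimes\C=\Hom_G(\Sym^kW_\C,(L_j)_\C)$ is nonzero exactly under the same condition. Finally, a real polynomial map vanishes if and only if its complex-linear extension does, and real $\Hom$ spaces are nonzero if and only if their complexifications are. Together these give the equivalence.

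I expect the main obstacle to be the bookkeeping in the passage between real and complex. One must check that $W_\C$ really is a sum of the lines $\C f_\chi$, which uses that $\C G$ is multiplicity-free for abelian $G$. One must also check that the restriction of the complexified polynomial to $W_\C$ is the complexification of the real restriction. Once the identity $f_\chi\odot f_\psi=f_{\chi\psi}$ is in place, the core argument is essentially a one-line monomial-independence observation.
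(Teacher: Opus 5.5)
Your proof is correct and follows the same route as the paper. The paper also complexifies, writes $W_\C$ as a sum of character lines in $\C G$, uses that pointwise products of character functions are again nonzero character functions, and compares with the characters occurring in $\Sym^kW_\C$. Your multinomial expansion in the coordinates $c_\chi$ is just the coordinate form of the paper's multiplication map $\mu_k:\Sym^kW_\C\to H_\C$, $f_1\cdots f_k\mapsto f_1\odot\cdots\odot f_k$, whose composite with $p_j$ is the polynomial $p_j(x^{\odot k})$.
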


\begin{proof}
After complexification, identify $H_\C$ with functions
$G\to\C$. Since $G$ is abelian, its regular representation
is a direct sum of one-dimensional character spaces.
The subrepresentation $W_\C$ is therefore spanned by a
subset of the character functions.

Consider the equivariant multiplication map
$$
\mu_k:\Sym^kW_\C\longrightarrow H_\C,
\qquad
f_1\cdots f_k\longmapsto
f_1\odot\cdots\odot f_k.
$$
A monomial in character functions is sent to their
pointwise product, which is a nonzero character function
of the same representation type. Thus every character
occurring in $\Sym^kW_\C$ occurs in
$\operatorname{im}\mu_k$, and conversely.

The polynomial $p_j(x^{\odot k})$ corresponds to
$p_j\mu_k$. Since pure powers span the symmetric power,
this polynomial is nonzero exactly when a character
of $(L_j)_\C$ occurs in $\Sym^kW_\C$.
That is, $\Hom_G(\Sym^kW,L_j)\ne0$.
\end{proof}

The proposition detects nonzero degrees only; it does not say that the selected polynomial spans the full equivariant polynomial space. For these
regular abelian representations, the presence of a
permitted degree in the layer is therefore decided
by the activation coefficient in \cref{thm:master}.

\vspace{0.7cm}
\section{Activation spectra}\label{sec:activation-spectra}

The coordinatewise factorisation separates the polynomial selected
by a layer from the coefficient supplied by its activation.
We now determine
when $a_k(\sigma;s)$ vanishes.

\begin{definition}
Let $s>0$, and suppose $\sigma(sZ)$ is square-integrable
for $Z\sim N(0,1)$. The \emph{Hermite spectrum of $\sigma$
at scale $s$} is the sequence
$(a_k(\sigma;s))_{k\ge0}$. Its \emph{support} is
$$
\supp a_\bullet(\sigma;s)
=
\{k\in\N_0:a_k(\sigma;s)\ne0\}.
$$
At scale one, write
$\supp a_\bullet(\sigma)=\supp a_\bullet(\sigma;1)$.
\end{definition}

For a layer satisfying the assumptions of \cref{thm:master},
$$
\{k:\Pi_kF\ne0\}
=
\{k:Q_k\ne0\}\cap\supp a_\bullet(\sigma;s).
$$
Thus the activation support determines which of the nonzero
coordinate polynomials contribute to the layer.

\subsection{Positively homogeneous activations}
\label{sec:homogeneous}
We start with positively homogeneous activations, which include the identity, absolute value, and ReLU.

\begin{definition}
A function $\sigma:\R\to\R$ is \emph{positively homogeneous of degree one} if
$$
\sigma(tx)=t\sigma(x)
\qquad (t\ge0,\ x\in\R).
$$
Writing $m_+=\sigma(1)$ and $m_-=-\sigma(-1)$, such a function is necessarily of the form
$$
\sigma(x)
=
\begin{cases}
m_+x,&x\ge0,\\
m_-x,&x\le0,
\end{cases}
\quad=\quad
\frac{m_++m_-}{2}\,x
+
\frac{m_+-m_-}{2}\,\abs{x}.
$$
Every such function is continuous and piecewise linear. 
\end{definition}

From here on, \emph{homogeneous} means positively homogeneous of degree one.

\begin{theorem}
\label{thm:classification}
Let $\sigma\ne0$ be homogeneous,
with slopes $m_+$ and $m_-$ as above. For every $s>0$,
$$
a_k(\sigma;s)=s\,a_k(\sigma),
$$
and its support is given by
$$
\begin{array}{lcl}
m_+=m_-&\Longrightarrow&\supp a_\bullet(\sigma)=\{1\},\\[2pt]
m_+=-m_-&\Longrightarrow&\supp a_\bullet(\sigma)=2\N_0,\\[2pt]
m_+\ne\pm m_-&\Longrightarrow&\supp a_\bullet(\sigma)=\{1\}\cup2\N_0 .
\end{array}
$$
\end{theorem}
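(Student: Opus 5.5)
The plan is to reduce everything to the Hermite spectra of the two building blocks $x$ and $\abs{x}$ using the decomposition
$$
\sigma(x)=\frac{m_++m_-}{2}\,x+\frac{m_+-m_-}{2}\,\abs{x},
$$
and to use linearity of $\sigma\mapsto a_k(\sigma;s)$. The scaling identity comes first and is immediate: homogeneity gives $\sigma(sZ)=s\,\sigma(Z)$ for $s>0$, so
$$
a_k(\sigma;s)=\frac1{k!}\E[s\sigma(Z)\He_k(Z)]=s\,a_k(\sigma).
$$
In particular, the support does not depend on $s$, and it suffices to work at scale one.

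Next I compute the spectrum of each summand. For the identity, $x=\He_1(x)$, so $a_\bullet(\mathrm{id})$ is supported exactly on $\{1\}$. For $\abs{x}$, evenness of $\abs{x}$ together with the parity $\He_k(-x)=(-1)^k\He_k(x)$ kills all odd coefficients. For even $k=2n$, I will compute $\E[\abs{Z}\He_{2n}(Z)]=2\int_0^\infty x\He_{2n}(x)\varphi(x)\,dx$ explicitly. The route I would try first uses the Rodrigues-type identity $\He_k(x)\varphi(x)=(-1)^k\frac{d^k}{dx^k}\varphi(x)$ and integrates by parts. Since $x\varphi=-\varphi'$, one obtains
$$
\int_0^\infty x\He_{k}\varphi\,dx=\bigl[\,x\He_{k-1}\varphi\,\bigr]_0^\infty+\int_0^\infty \He_{k-1}\varphi\,dx
=\He_{k-2}(0)\varphi(0)
$$
for $k\ge2$, where the last step uses $\int_0^\infty \He_{k-1}\varphi=\He_{k-2}(0)\varphi(0)$. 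Then $\He_{2n-2}(0)=(-1)^{n-1}(2n-3)!!\ne0$ shows that every even coefficient with $k\ge2$ is nonzero, and $a_0(\abs{\cdot})=\E\abs{Z}=\sqrt{2/\pi}\ne0$ handles $k=0$. Hence $\supp a_\bullet(\abs{\cdot})=2\N_0$ exactly. The main obstacle is this step: establishing nonvanishing of every even coefficient rather than just almost all of them, and getting the boundary terms right.

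Finally I combine the two pieces. The supports $\{1\}$ and $2\N_0$ are disjoint, so no cancellation can occur between the two summands. Thus $a_k(\sigma)$ equals $\frac{m_++m_-}{2}$ at $k=1$, equals $\frac{m_+-m_-}{2}\,a_k(\abs{\cdot})$ for even $k$, and vanishes otherwise.
\begin{itemize}
\item If $m_+=m_-$, the $\abs{x}$ part vanishes, and $\sigma\ne0$ forces a nonzero linear coefficient, giving support $\{1\}$.
\item If $m_+=-m_-$, the linear part vanishes and the $\abs{x}$ coefficient is nonzero, giving support $2\N_0$.
\item If $m_+\neq\pm m_-$, both coefficients are nonzero, giving support $\{1\}\cup2\N_0$.
\end{itemize}
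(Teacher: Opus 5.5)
Your proposal is correct and follows essentially the same route as the paper: split $\sigma$ into a linear part and a multiple of $\abs{x}$, get $a_{2r}(\abs{\cdot})=\frac{2}{(2r)!}\He_{2r-2}(0)\varphi(0)\ne0$ by two integrations by parts, and use the disjoint supports $\{1\}$ and $2\N_0$ to rule out cancellation. Your first integration-by-parts display has the wrong sign on the boundary term (it should be $-\bigl[x\He_{k-1}\varphi\bigr]_0^\infty$), but that term vanishes at both ends, so the result is unaffected.
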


\begin{proof}
Positive homogeneity gives $\sigma(sZ)=s\sigma(Z)$,
which proves the scaling identity.

The odd part of $\sigma$ is a multiple of $x=\He_1(x)$, which contributes only in degree one.
The function $\abs{x}$ is even, so its odd Hermite
coefficients vanish. Its degree-zero coefficient is
$\E[\abs{Z}]=2\varphi(0)>0$, where $\varphi(t)=(2\pi)^{-1/2}e^{-t^2/2}$ is the standard one-dimensional Gaussian density. For $r\ge1$, two integrations
by parts give
$$
a_{2r}(\abs{\cdot})
=
\frac{2}{(2r)!}\He_{2r-2}(0)\varphi(0).
$$
These coefficients are nonzero because
$$
\He_{2m}(0)
=
(-1)^m\frac{(2m)!}{2^m m!}.
$$
Thus $\abs{x}$ has support exactly $2\N_0$.
The two parts of $\sigma$ have disjoint supports, so
their nonzero coefficients cannot cancel.
\end{proof}

\begin{example}
    The identity activation has support $\{1\}$, absolute value has support $2\N_0$, and $\operatorname{ReLU}(x) := \max(x,0)=\frac{x+\abs{x}}2$ has support $\{1\}\cup2\N_0$.
\end{example}

The scaling identity shows that changing the input scale cannot create a degree that is absent at scale one. In particular, no odd degree $k>1$ occurs, for any $\sigma$ in the class and any input scale.

\begin{corollary}
\label{cor:odd-blind}
Let $F(x)=R\sigma_{\cB}(Ax)$ satisfy the assumptions of
\cref{thm:master}, with $\sigma$ homogeneous. Then
$$
\Pi_kF=0
\qquad\text{for every odd }k\ge3.
$$
The same conclusion holds for finite sums of such layers.
\end{corollary}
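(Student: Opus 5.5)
The proof combines the factorisation of \cref{thm:master} with the support computation of \cref{thm:classification}. By \eqref{eq:master},
$$
\Pi_kF=\frac{a_k(\sigma;s)}{s^k}\,\Pi_kQ_k,
$$
where $s>0$ is the common coordinate scale. This holds because the layer satisfies the hypotheses of \cref{thm:master}, so the hidden action is transitive and $A\neq0$. It therefore suffices to show that $a_k(\sigma;s)=0$ for every odd $k\ge3$.

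\Cref{thm:classification} gives $a_k(\sigma;s)=s\,a_k(\sigma)$. It also shows that, in all three cases for the slopes, the support of $a_\bullet(\sigma)$ lies in $\{1\}\cup2\N_0$. If $\sigma=0$ every coefficient vanishes trivially, so we may take $\sigma\ne0$ as that theorem requires. Hence for odd $k\ge3$ we get $a_k(\sigma;s)=0$, and so $\Pi_kF=0$. The formula needs no information about $Q_k$; in particular the activation coefficient kills these degrees regardless of $A$ and $R$.

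For a finite sum $\sum_iF_i$ of such layers, the chaos projection $\Pi_k$ is linear on $L^2(\gamma_L;K)$. Each $F_i$ lies in this space by \cref{prop:integrable}, or directly by the integrability assumption on the activation. Thus $\Pi_k\sum_iF_i=\sum_i\Pi_kF_i=0$ for every odd $k\ge3$. This holds even when the summands have different lifts, readouts, hidden representations, homogeneous activations or scales, since each term vanishes separately.

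There is no real obstacle; the only point to check is that every summand meets the hypotheses of \cref{thm:master}. If some hidden representation has several coordinate orbits, one would instead apply the orbitwise version from \cref{app:intransitive}. On each orbit the same scaling identity kills the odd coefficients above one, and summing over orbits gives the same conclusion.
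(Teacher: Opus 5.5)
Your proof is correct, but it takes a different route from the paper's. For odd $k\ge3$, \cref{thm:classification} gives $a_k(\sigma;s)=s\,a_k(\sigma)=0$, so \eqref{eq:master} kills $\Pi_kF$ whatever $Q_k$ is. Linearity of $\Pi_k$ then handles finite sums, and in the intransitive case the diagonal matrix $D_k$ of \cref{prop:intransitive} vanishes identically.

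The paper argues instead at the level of the layer itself. Writing $\sigma(z)=\lambda z+\mu\abs z$, the odd part $\tfrac12\bigl(F(x)-F(-x)\bigr)$ equals $\lambda RAx$, which lies in the first chaos. An even map has no odd chaos components, since the reflection $x\mapsto -x$ lies in $O(L)$ and acts on $\cH_k(L)$ by $(-1)^k$.

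Both arguments rest on the same fact, that $\abs{\cdot}$ is even. The paper applies it to $F$, whereas you apply it to the one-variable Hermite coefficients. The paper's route is more economical and more general. It uses neither \cref{thm:master} nor \cref{thm:classification}, and it needs no transitivity, no equal coordinate variances, and not even equivariance of $A$ and $R$. Intransitive hidden bases, and sums of layers with different lifts, scales and slopes, are therefore covered with no case analysis.

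Your route gives a more informative statement in the paper's three-obstruction language. It locates the vanishing in the activation factor $a_k(\sigma;s)$, uniformly in the scale $s$, so it is an activation obstruction rather than a coordinate one. This is exactly the remark the paper makes immediately before the corollary.
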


\begin{proof}
Write $\sigma(z)=\lambda z+\mu\abs z$ with $\lambda=\tfrac{m_++m_-}2$ and
$\mu=\tfrac{m_+-m_-}2$. The second summand is even, so the odd part of
the layer is
$$
\frac{F(x)-F(-x)}2
=
\lambda\,RAx ,
$$
which is linear and therefore lies in the first chaos, while an even map
has no odd chaos components at all. Hence $\Pi_kF=0$ for every odd
$k\ge3$, and the same argument applies to a finite sum of such layers.
\end{proof}

This statement applies to a single layer, or a sum of such layers, but it
does not imply the same restriction for their compositions.

\subsection{Bias and input scale}\label{sec:bias-scale}
We now show how a
shared additive bias changes this conclusion.

\begin{definition}
For a threshold $c\in\R$, define
$$
\sigma_c(x):= \operatorname{ReLU}(x-c).
$$
For fixed equivariant maps $A:L\to H$ and $R:H\to K$,
write $F_c(x)=R(\sigma_c)_{\cB}(Ax)$.
\end{definition}

The same threshold is applied to every hidden coordinate,
so $F_c$ remains equivariant. The identity $\sigma_c(sz)=s\,\sigma_{c/s}(z)$
gives $a_k(\sigma_c;s)=s\,a_k(\sigma_{c/s})$.
Thus the support depends on the ratio $c/s$. The following formula determines it explicitly.

\begin{corollary}\label{cor:bias}
For every $s>0$ and $k\ge2$,
$$
a_k(\sigma_c;s)
=
\frac{s}{k!}\He_{k-2}(c/s)\varphi(c/s).
$$
Moreover, $a_0(\sigma_c;s)$ and $a_1(\sigma_c;s)$
are strictly positive.
\end{corollary}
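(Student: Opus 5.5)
First I will reduce to scale one using the identity $a_k(\sigma_c;s)=s\,a_k(\sigma_{c/s})$ recorded just before the statement. With $t=c/s$, it then suffices to prove
$$
a_k(\sigma_t)=\frac1{k!}\He_{k-2}(t)\varphi(t)\qquad(k\ge2),
$$
and to show $a_0(\sigma_t),a_1(\sigma_t)>0$.

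The main tool is Gaussian integration by parts in its Hermite form. From the Rodrigues formula $\He_k(z)\varphi(z)=(-1)^k\varphi^{(k)}(z)$ we get $\He_k\varphi=-(\He_{k-1}\varphi)'$. Write
$$
k!\,a_k(\sigma_t)=\int_{\R}\operatorname{ReLU}(z-t)\He_k(z)\varphi(z)\,dz .
$$
I will integrate by parts once against $-(\He_{k-1}\varphi)'$. The boundary terms vanish: at $z=t$ the factor $\operatorname{ReLU}(z-t)$ is zero, and at infinity the Gaussian factor decays. This leaves $\int_t^\infty \He_{k-1}(z)\varphi(z)\,dz$, since the weak derivative of $\operatorname{ReLU}(\cdot-t)$ is $\one_{z>t}$. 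A second integration by parts, using $\He_{k-1}\varphi=-(\He_{k-2}\varphi)'$, evaluates this as $\He_{k-2}(t)\varphi(t)$. This is valid for $k-1\ge1$, that is, $k\ge2$. Dividing by $k!$ gives the formula, and rescaling by $s$ gives the stated expression. This is the same computation that produced $a_{2r}(\abs{\cdot})$ in \cref{thm:classification}, now with the kink moved to $t$.

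For the low coefficients, I will first note that $a_1(\sigma_t)=\E[\operatorname{ReLU}(Z-t)Z]$. One integration by parts (Stein's identity) gives $\E[\one_{Z>t}]=P(Z>t)>0$. For $a_0(\sigma_t)=\E[\operatorname{ReLU}(Z-t)]$, the integrand is nonnegative and is strictly positive on the set $\{z>t\}$, which has positive Gaussian measure. Multiplying by $s>0$ preserves both positivities.

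The only delicate point is justifying integration by parts with the nonsmooth ReLU. I would handle this by splitting the integral at $t$ and working on $[t,\infty)$, where $\sigma_t$ is smooth, so no distributional argument is needed. Beyond that, the proof only requires keeping the signs and indices of the Hermite recursion straight.
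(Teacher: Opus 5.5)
Your proposal is correct and follows essentially the paper's proof: you rescale to $u=c/s$, then integrate by parts twice on $[u,\infty)$ using $(\He_j\varphi)'=-\He_{j+1}\varphi$, and read the degree-zero and degree-one cases off the same integral. Your explicit arguments that $a_0>0$ (nonnegative integrand, strictly positive on a set of positive measure) and that $a_1=P(Z>c/s)>0$ fill in what the paper leaves as direct.
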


\begin{proof}
Put $u=c/s$. For $k\ge2$,
$$
a_k(\sigma_c;s)=\frac{s}{k!}\int_u^\infty (z-u)\He_k(z)\varphi(z)\,dz
=\frac{s}{k!}\He_{k-2}(u)\varphi(u),
$$
where the second equality follows from two integrations by parts using $(\He_j\varphi)'=-\He_{j+1}\varphi$. The cases $k=0,1$ follow directly from the same integral and are strictly positive.
\end{proof}

\begin{corollary}
Fix $A,R\neq0$ and a degree bound
$D\in\N_0$. Let $s$ be the common coordinate scale.
For all but finitely many thresholds $c\in\R$,
$$
\Pi_kF_c\ne0
\quad\Longleftrightarrow\quad
Q_k\ne0
\qquad(0\le k\le D).
$$
\end{corollary}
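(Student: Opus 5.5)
The plan is to combine the coordinatewise factorisation of \cref{thm:master} with the explicit bias formula of \cref{cor:bias}. The key observation is that the common coordinate scale $s$ depends only on $A$, via $s^2=\tr C/\dim H$, and not on the threshold $c$. So the layer $F_c$ has the same coordinate polynomials $Q_k$ for every $c$, and \cref{thm:master} gives
$$
\Pi_kF_c\ne0\quad\Longleftrightarrow\quad a_k(\sigma_c;s)\ne0\ \text{and}\ Q_k\ne0 .
$$
The direction $\Pi_kF_c\ne0\Rightarrow Q_k\ne0$ therefore holds for every $c$. It remains to show that, outside a finite set of thresholds, $a_k(\sigma_c;s)\ne0$ for all $0\le k\le D$ simultaneously.

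For $k=0,1$ nothing is excluded, since \cref{cor:bias} gives $a_0(\sigma_c;s),a_1(\sigma_c;s)>0$ for every $c$. For $2\le k\le D$, \cref{cor:bias} gives
$$
a_k(\sigma_c;s)=\frac{s}{k!}\He_{k-2}(c/s)\varphi(c/s).
$$
Here $s>0$ and $\varphi>0$ everywhere, so $a_k(\sigma_c;s)=0$ exactly when $c/s$ is a real root of $\He_{k-2}$. The polynomial $\He_{k-2}$ is monic of degree $k-2$, so it is nonzero and has at most $k-2$ real roots; for $k=2$ it is the constant $1$ and has none. The exceptional set is therefore
$$
E=\bigcup_{k=2}^{D}\{\,s\,t:\He_{k-2}(t)=0\,\},
$$
a finite union of finite sets, with $|E|\le\sum_{k=2}^D(k-2)$. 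For $c\notin E$, every $a_k(\sigma_c;s)$ with $k\le D$ is nonzero, which gives the converse implication $Q_k\ne0\Rightarrow\Pi_kF_c\ne0$.

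I do not expect a serious obstacle here. The only point needing care is to confirm that the bias does not change the scale $s$ or the polynomial $Q_k$. In the setting of \cref{thm:master}, the activation $\sigma_c$ is simply a different scalar function applied to the same Gaussian coordinates $\langle X,a_b\rangle\sim N(0,s^2)$. Square-integrability holds because $\sigma_c$ is continuous piecewise linear with one knot, so the factorisation applies verbatim with $a_k(\sigma;s)$ replaced by $a_k(\sigma_c;s)$. The assumption $A\ne0$ ensures $s>0$. The assumption $R\ne0$ is not actually needed for the equivalence; it only keeps the statement from being vacuous.
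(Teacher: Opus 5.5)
Your proof is correct and follows essentially the same route as the paper. Both arguments combine the factorisation of \cref{thm:master} with the explicit coefficients of \cref{cor:bias}, note that degrees $0$ and $1$ never vanish, and exclude the finitely many thresholds $c$ for which $c/s$ is a root of some $\He_{k-2}$ with $2\le k\le D$; your extra remarks (that $s$ and $Q_k$ do not depend on $c$, and that $R\ne0$ is not needed for the equivalence) are accurate.
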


\begin{proof}
The coefficients in degrees zero and one never vanish.
For $2\le k\le D$, vanishing occurs precisely when
$c/s$ is a root of $\He_{k-2}$. Each of these finitely
many polynomials has finitely many roots. Outside the
resulting finite set of thresholds, every activation
coefficient through degree $D$ is nonzero.
Apply \cref{thm:master}.
\end{proof}

Thus a suitable shared bias retains every nonzero
coordinate polynomial through a chosen degree.
It cannot recover a degree for which $Q_k=0$, or select
additional polynomial directions with $A$ and $R$ fixed.

Unlike the positively homogeneous case, changing the
input scale at a fixed nonzero threshold can change
activation support, because it changes $c/s$. This
concerns only the scalar activation coefficients, so any change
to the lift must also be accounted for in the coordinate
polynomials.

\subsection{An activation obstruction for
\texorpdfstring{$C_8$}{C8}}
\label{sec:C8}

The ungraded ReLU obstruction in this example is already
part of the cyclic-group classification in
\cite[Theorem~3B.4]{GTW2025}. Here we identify its first
permitted degree and explain the effect of a bias.

The real regular representation of $C_8$ decomposes as
$$
\R[C_8]
\cong
\one\oplus\varepsilon
\oplus\rho_1\oplus\rho_2\oplus\rho_3,
$$
where $\varepsilon$ is the sign character and $\rho_j$
is the two-dimensional representation in which a generator
acts by rotation through $2\pi j/8$.

Consider the two projected layers from $\rho_1$ to $\rho_3$
and from $\rho_3$ to $\rho_1$.

The element $a^4$ acts as $-I$ on $\rho_1$ and on $\rho_3$, so
\cref{cor:parity-existence} already shows that only odd degrees occur in
either direction. \Cref{ex:cyclic} locates the first of them:
$$
D(\rho_1,\rho_3)
=
D(\rho_3,\rho_1)
=
\{3,5,7,\ldots\}.
$$
Indeed, for the first direction the character condition is
$r\equiv\pm3\pmod8$, while for the reverse direction it is
$3r\equiv\pm1\pmod8$, which is equivalent to the same
condition. Together with $|r|\le k$ and
$r\equiv k\pmod2$, this permits exactly the odd degrees
$k\ge3$.

By \cref{prop:abelian-realisation}, the coordinate
polynomials are nonzero in every permitted degree.
For a positively homogeneous activation, however,
\cref{thm:classification} gives
$$
a_k(\sigma;s)=0
\qquad(k=3,5,7,\ldots).
$$
Hence every chaos component of either projected layer
vanishes. Since these layers are continuous, Gaussian
almost-everywhere vanishing implies that they vanish
identically.

A shifted ReLU with $c\ne0$ changes this conclusion.
Its third coefficient is nonzero by \cref{cor:bias},
so both projected layers acquire a nonzero third-chaos
component.

This distinguishes the example from
\cref{ex:strict-coordinate}. There, the quadratic
coordinate polynomial vanishes and changing the activation
cannot repair it. Here, the cubic coordinate polynomial
is already nonzero and the bias supplies its missing scalar
coefficient.

\vspace{0.7cm}
\section{Architectures determined by the degree calculation}\label{sec:architecture}

The preceding results concern one component of one layer. We now describe what a collection of such components can span, how to control its activation coefficients, and how these statements lead to explicit polynomial layers.

\subsection{The degree subspace of a fixed collection of components}
At a fixed degree, each branch contributes at most one polynomial direction. We first compute the span of those directions.

Fix equivariant $A_h:L\to H_h$ and $R_h:H_h\to K$ for $1\le h\le M$, where each $H_h=\R^{\cB_h}$ has a transitive coordinate basis $\cB_h$ and $A_h\ne0$. Write
$$
Q_{h,k}(x)=R_h\bigl((A_hx)^{\odot k}\bigr),\qquad
\mathcal S_k:=\Span\{Q_{h,k}:1\le h\le M\}\subseteq\Poly_k(L,K)^G.
$$
Consider the layer $F_\theta(x)=\sum_h\theta_hR_h(\sigma_h)_{\cB_h}(A_hx)$, with independent real output coefficients $\theta_h$.

\begin{proposition}\label{prop:accessible}
If $a_k(\sigma_h;s_h)\ne0$ for all $h$, where $s_h^2=\tr(A_hA_h^*)/\dim H_h$, then
$$
\{\Pi_kF_\theta:\theta\in\R^M\}=\Pi_k\mathcal S_k.
$$
Its dimension is the rank of the positive semidefinite Gram matrix
$$
\mathsf G^{(k)}_{h\ell}
=k!\,\tr\left(R_h(A_hA_\ell^*)^{\circ k}R_\ell^*\right).
$$
Here $A_hA_\ell^*:H_\ell\to H_h$ is represented in the two coordinate bases; its entrywise power is therefore well-defined even when $H_h$ and $H_\ell$ have different dimensions. If an activation coefficient vanishes, omit that component before forming the span and Gram matrix. In particular, the whole degree-$k$ equivariant space is spanned exactly when this rank equals $\dim\Hom_G(\Sym^kL,K)$.
\end{proposition}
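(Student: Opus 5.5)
The plan is to reduce everything to the single-branch factorisation of \cref{thm:master}. By linearity of $\Pi_k$, each branch contributes separately:
$$
\Pi_kF_\theta=\sum_h\theta_h\frac{a_k(\sigma_h;s_h)}{s_h^k}\,\Pi_kQ_{h,k}.
$$
Here \cref{thm:master} applies to each branch because each $\cB_h$ is transitive and $A_h\ne0$. The scale $s_h$ is the common coordinate norm, and the lemma giving $s_h^2=\tr C_h/\dim H_h$ with $C_h=A_hA_h^*$ identifies it with the stated formula.

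Since every coefficient $c_h:=a_k(\sigma_h;s_h)/s_h^k$ is nonzero, the map $\theta\mapsto(\theta_hc_h)_h$ is a bijection of $\R^M$. Hence the set $\{\Pi_kF_\theta\}$ is exactly the linear span of the $\Pi_kQ_{h,k}$, which is $\Pi_k\mathcal S_k$. If some $c_h=0$, that branch contributes nothing in degree $k$, which justifies the instruction to omit it.

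For the dimension, I compute the $L^2$ Gram matrix of the vectors $\Pi_kQ_{h,k}$. The dimension of their span equals the rank of this Gram matrix, which is automatically positive semidefinite. Expanding
$$
Q_{h,k}(x)=\sum_{b\in\cB_h}\langle x,a^{h}_b\rangle^kR_hb,
\qquad a^h_b=A_h^*b,
$$
I apply $\Pi_k$ termwise and use \cref{lem:mehler} exactly as in the proof of \cref{thm:master}, now with coordinate vectors from two different branches. This gives
$$
\bigl\langle\Pi_k\langle x,a\rangle^k,\Pi_k\langle x,a'\rangle^k\bigr\rangle_{L^2}=k!\langle a,a'\rangle^k.
$$
This identity is the one step needing care. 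Mehler's lemma is stated for standard normals with correlation $\rho$, so I would rescale by $\norm{a}\norm{a'}$, using $\Pi_k(\ell^k)=s^k\He_k(\ell/s)$. An alternative route is the isometry $\cH_k\cong\Sym^kL$ with $\ip{a^k}{a'^k}=\ip{a}{a'}^k$, which is up to the factor $k!$ implicit in \cref{prop:coordinate-image}.

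Noting that $\langle a^h_b,a^\ell_{b'}\rangle=(A_hA_\ell^*)_{bb'}$ in the two coordinate bases, we get
$$
\langle\Pi_kQ_{h,k},\Pi_kQ_{\ell,k}\rangle
=k!\sum_{b,b'}(A_hA_\ell^*)_{bb'}^k\langle R_hb,R_\ell b'\rangle
=k!\tr\bigl(R_h(A_hA_\ell^*)^{\circ k}R_\ell^*\bigr).
$$
This is $\mathsf G^{(k)}_{h\ell}$.

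Finally, \cref{thm:gaussian} identifies $\Pi_k$ of the equivariant degree-$k$ polynomials with $\Hom_G(\Sym^kL,K)$. Combined with injectivity of $\Pi_k$ on homogeneous degree-$k$ polynomials, this shows $\Pi_k\mathcal S_k$ sits inside a space of dimension $\dim\Hom_G(\Sym^kL,K)$. The span is therefore everything exactly when the rank attains that dimension. I expect no real obstacle beyond the cross-branch Mehler bookkeeping.
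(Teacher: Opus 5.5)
Your proposal is correct and follows essentially the same route as the paper: apply \cref{thm:master} branchwise, absorb the nonzero scalars into $\theta$ to obtain exactly the span, compute the Gram matrix by the cross-branch Mehler identity, and use injectivity of $\Pi_k$ on homogeneous degree-$k$ polynomials for the final dimension comparison. Your explicit rescaling by $\norm{a}\norm{a'}$ when $s_h\ne s_\ell$ fills in the detail the paper abbreviates as ``the same calculation as in'' \cref{thm:master}.
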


\begin{proof}
Fix the degree $k$. By \cref{thm:master}, the $h$th branch contributes
$$
\Pi_k\!\left(R_h(\sigma_h)_{\cB_h}(A_hx)\right)
=
\frac{a_k(\sigma_h;s_h)}{s_h^k}\,\Pi_kQ_{h,k}.
$$
Thus, when $a_k(\sigma_h;s_h)\ne0$, the activation only rescales the
degree-$k$ direction $\Pi_kQ_{h,k}$ selected by that branch. Since the
coefficients $\theta_h$ are independent, varying $\theta$ therefore gives
exactly
$$
\{\Pi_kF_\theta:\theta\in\R^M\}
=
\Span\{\Pi_kQ_{h,k}:1\le h\le M\}
=
\Pi_k\mathcal S_k.
$$

It remains to determine the dimension of this span. The matrix
$\mathsf G^{(k)}$ is precisely the Gram matrix of the vectors
$\Pi_kQ_{h,k}$. The same calculation as in \cref{thm:master} gives
$$
\mathsf{G}_{h \ell}^{(k)}
=
k!\,\tr\!\left(
R_h(A_hA_\ell^*)^{\circ k}R_\ell^*
\right), 
\quad \text{and} \quad
\operatorname{rank}\mathsf G^{(k)}
=
\dim \Pi_k\mathcal S_k.
$$
Finally, $\Pi_k$ is an isomorphism on homogeneous degree-$k$
polynomials, so this accessible space equals the full degree-$k$
equivariant polynomial space exactly when
$$
\operatorname{rank}\mathsf G^{(k)}
=
\dim\Hom_G(\Sym^kL,K).
$$
If $a_k(\sigma_h;s_h)=0$, the corresponding branch contributes nothing
in degree $k$ and may simply be omitted.
\end{proof}

This is the degree-$k$ space accessible for the chosen lifts $A_h$ and readouts $R_h$. Varying those maps may enlarge it. Moreover, although each degree can be analysed separately, the same
coefficients $\theta_h$ act across all degrees, so the degree components
cannot in general be chosen independently.

This also yields an approximation obstruction. For
$T\in L^2(\gamma_L;K)^G$ and any degree bound $D\ge0$,
$$
\inf_\theta \norm{T-F_\theta}_{L^2}^2
\ge
\sum_{k=0}^D
\operatorname{dist}_{L^2}\!\left(
\Pi_kT,\Pi_k\mathcal S_k
\right)^2,
$$
since the degree-$k$ component
of $F_\theta$ lies in $\Pi_k\mathcal S_k$. Thus any component of $\Pi_kT$
outside this space gives an unavoidable error for the fixed architecture under the Gaussian reference measure.

The preceding proposition treats one degree at a time, but the same branch coefficient acts across all degrees. To control several degrees independently, we need activations with prescribed initial Hermite coefficients.

\begin{proposition}\label{prop:hermite-control}
Fix $D\ge0$. There exist real $t_0,\dots,t_D$ such that, for every $(b_0,\dots,b_D)\in\R^{D+1}$, there are $\theta_0,\dots,\theta_D\in\R$ for which the continuous piecewise-linear function
$$
\sigma(z)=\sum_{r=0}^D\theta_r \operatorname{ReLU}(z-t_r)
$$
satisfies $a_k(\sigma;1)=b_k$ for $0\le k\le D$. Thus the first $D+1$ Hermite coefficients can be chosen independently.
\end{proposition}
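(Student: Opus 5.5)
The plan is to reduce the statement to the invertibility of a $(D+1)\times(D+1)$ matrix. Hermite coefficients are linear in $\sigma$, so
$$
a_k(\sigma;1)=\sum_{r=0}^D\theta_r\,a_k(\sigma_{t_r}),
$$
where $\sigma_t(z)=\operatorname{ReLU}(z-t)$ as in \cref{sec:bias-scale}. Set $M_{kr}=a_k(\sigma_{t_r})$ for $0\le k,r\le D$. If $M$ is invertible for some choice of $t_0,\dots,t_D$, then for every target vector $b$ the coefficients $\theta=M^{-1}b$ work. So it suffices to exhibit thresholds with $\det M\ne0$.

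First I would record the columns explicitly. By \cref{cor:bias} with $s=1$, for $k\ge2$,
$$
a_k(\sigma_t)=\frac{1}{k!}\He_{k-2}(t)\varphi(t).
$$
For $k=0,1$ I would compute directly from the defining integral:
$$
a_1(\sigma_t)=\int_t^\infty z(z-t)\varphi(z)\,dz=\Phi^c(t),
\qquad
a_0(\sigma_t)=\varphi(t)-t\,\Phi^c(t),
$$
where $\Phi^c$ is the Gaussian upper tail function. Each entry is real-analytic in $t$. Hence $\det M$ is a real-analytic function of $(t_0,\dots,t_D)\in\R^{D+1}$, and it suffices to show that it is not identically zero. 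Then generic thresholds work.

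To show nonvanishing, I would use a Wronskian argument. Consider the functions
$$
f_k(t)=a_k(\sigma_t),\qquad k=0,\dots,D.
$$
If they are linearly independent as analytic functions on $\R$, then there exist points $t_0,\dots,t_D$ with $\det(f_k(t_r))\ne0$. This is a standard fact: choose the points inductively, each time avoiding the zero set of a nonzero analytic function. So the key step is linear independence. Suppose
$$
\sum_k c_kf_k\equiv0.
$$
Dividing by $\varphi(t)$, the terms with $k\ge2$ give the polynomial
$$
P(t)=\sum_{k\ge2}\frac{c_k}{k!}\He_{k-2}(t).
$$
The remaining terms are
$$
c_0\bigl(1-t\,\Phi^c(t)/\varphi(t)\bigr)+c_1\,\Phi^c(t)/\varphi(t).
$$
The Mills ratio $m(t)=\Phi^c(t)/\varphi(t)$ is not a rational function: as $t\to-\infty$ it grows like $\sqrt{2\pi}\,e^{t^2/2}$. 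Therefore the relation reads
$$
(c_1-c_0t)\,m(t)+c_0+P(t)=0,
$$
and the superexponential growth forces $c_1-c_0t\equiv0$, that is, $c_0=c_1=0$. Then $P\equiv0$. Since the $\He_j$ have distinct degrees, all remaining $c_k$ vanish.

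An alternative, more self-contained route avoids analyticity subtleties. Differentiate $\sigma_t$ in $t$: the second distributional $t$-derivative is a delta at $t$, so $\partial_t^2a_k(\sigma_t)=\frac1{k!}\He_k(t)\varphi(t)$. This is the same independence fact in disguise. I expect the main obstacle to be handling the degree-$0$ and degree-$1$ columns cleanly, since they are not of the polynomial-times-Gaussian form. The Mills-ratio growth argument is what I would try first. A fallback is to take $D+1$ distinct thresholds and argue via the Vandermonde-like structure of $\He_{k-2}(t_r)$ after handling the first two rows separately.
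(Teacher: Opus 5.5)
Your proposal is correct. The reduction is the same as the paper's: linearity of $a_k$ in $\sigma$ reduces the claim to finding thresholds at which $\bigl(f_k(t_r)\bigr)$ is invertible, that is, to linear independence of $f_k(t)=a_k(\operatorname{ReLU}(\cdot-t))$. You differ in how you prove that independence.

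\textbf{The paper's route.} It uses the argument you list as your alternative. Differentiating twice in $t$ gives $f_k''(t)=\frac1{k!}\He_k(t)\varphi(t)$ for every $k\ge0$. A relation $\sum_kc_kf_k\equiv0$ therefore becomes $\varphi(t)\sum_k\frac{c_k}{k!}\He_k(t)\equiv0$, and independence of the Hermite polynomials finishes the proof. This treats the degree-zero and degree-one columns like all the others and needs no asymptotics. Indeed $f_0''=\varphi$ and $f_1''=t\varphi$ turn your two awkward columns into $\He_0\varphi$ and $\He_1\varphi$.

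\textbf{Your route.} You keep the closed forms $f_0=\varphi-t\Phi^c$, $f_1=\Phi^c$ and $f_k=\frac1{k!}\He_{k-2}\varphi$ for $k\ge2$, all of which are correct. You then separate the two non-Gaussian columns using the $e^{t^2/2}$ growth of the Mills ratio as $t\to-\infty$. This is valid: if $(c_0,c_1)\ne(0,0)$, then $\abs{c_1-c_0t}$ is bounded below for very negative $t$, so a polynomial would have to dominate $e^{t^2/2}$. It does, however, require the case analysis that the second-derivative trick avoids.

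\textbf{What each buys.} Your analyticity remark gives something slightly stronger than the paper states. Since $\det\bigl(f_k(t_r)\bigr)$ is real-analytic on $\R^{D+1}$ and not identically zero, almost every tuple of thresholds works, not merely some tuple. The paper's argument is shorter and uniform in $k$.

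\textbf{Minor point.} The step ``linearly independent functions admit points with nonzero determinant'' needs no analyticity at all. The inductive choice of points you describe is that argument; it is not a Wronskian argument.
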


\begin{proof}
For $0\le k\le D$, set $f_k(t):=a_k(\operatorname{ReLU}(\,\cdot-t))$. By linearity of the Hermite coefficients, if $\sigma(z)=\sum_{r=0}^D\theta_r\operatorname{ReLU}(z-t_r)$, then
$$
a_k(\sigma;1)=\sum_{r=0}^D\theta_r f_k(t_r).
$$
It is therefore enough to show that $(f_0(t),\dots,f_D(t))$, as $t$ varies, spans $\R^{D+1}$. If not, there exist coefficients $c_0,\dots,c_D$, not all zero, such that
$\sum_{k=0}^D c_kf_k(t)=0$ for every $t$. Differentiating $f_k(t)$ twice gives $f_k''(t)=\frac1{k!}\He_k(t)\varphi(t)$. Hence
$$
0=\sum_{k=0}^D c_kf_k''(t)
=\varphi(t)\sum_{k=0}^D\frac{c_k}{k!}\He_k(t).
$$
Since $\varphi(t)>0$ and the Hermite polynomials are linearly independent, $c_k=0$ for every $k$.

Therefore the vectors $(f_0(t),\dots,f_D(t))$ span $\R^{D+1}$. Choose $t_0,\dots,t_D$ giving a basis. Then, for any prescribed
$(b_0,\dots,b_D)$, suitable coefficients $\theta_r$ give
$a_k(\sigma;1)=b_k$ for all $0\le k\le D$.
\end{proof}

For a lift with coordinate variance $s^2$, the preceding proposition may
be applied after rescaling the hidden coordinates by $s$. Choosing
$b_k=\delta_{kj}$ produces a piecewise-linear activation whose chaos components vanish in every degree $0,\dots,D$ except degree $j$.
Components above degree $D$ may still be present.

There are two simpler ways to obtain an exact degree-$j$ layer. Using the
polynomial activation $\He_j$ coordinatewise gives the pure $j$th-chaos component
$$
R(\He_j)_{\cB}(Ax/s)
=
s^{-j}\Pi_j\bigl[R\bigl((Ax)^{\odot j}\bigr)\bigr].
$$
Alternatively, $R\bigl((Ax)^{\odot j}\bigr)$
is itself an ordinary homogeneous polynomial of degree $j$.

Thus the piecewise-linear construction controls only finitely many chaos
coefficients, whereas the Hermite activation isolates a single chaos
degree exactly. In particular, \cref{prop:hermite-control} does not control the higher-degree tail.

\subsection{Regular lifts span every degree}
Regular representations of finite groups give a simple construction showing
that coordinatewise powers can realise every equivariant polynomial map of a
fixed degree. The obstruction of \cref{ex:strict-coordinate} is therefore a
property of the chosen coordinates, not an intrinsic one.

\begin{proposition}\label{prop:regular-span}
Let $G$ be finite and let $L,K$ be real orthogonal $G$-representations. For
$u\in L$ define the linear map
$$
A_u:L\longrightarrow\R G,
\qquad
(A_ux)_g=\ip{x}{gu},
$$
where $\R G$ carries the left regular action, and for $v\in K$ define $A_v:K\to\R G$ analogously and put $R_v:=|G|^{-1}A_v^*$, that is, $R_vz=|G|^{-1}\sum_{g\in G}z_g\,gv$. Then $A_u$
and $R_v$ are equivariant, their coordinate polynomial is
$$
Q_{u,v,k}(x)
=
R_v\bigl((A_ux)^{\odot k}\bigr)
=
\frac1{|G|}\sum_{g\in G}\ip{x}{gu}^k\,gv ,
$$
and for every $k\ge0$ these polynomials span $\Poly_k(L,K)^G$ as $u$ and $v$
vary.
\end{proposition}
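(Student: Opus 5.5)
The proof has three steps: check equivariance, compute $Q_{u,v,k}$ directly, and then prove spanning by a duality argument.

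\emph{Equivariance.} With the left regular action $(hz)_g=z_{h^{-1}g}$, we get
$$(A_u(hx))_g=\ip{hx}{gu}=\ip{x}{h^{-1}gu}=(A_ux)_{h^{-1}g}=(hA_ux)_g,$$
so $A_u$ is equivariant. Since $A_v$ is equivariant and the inner products are invariant, the adjoint $A_v^*$ is also equivariant, and hence so is $R_v$. Computing $A_v^*$ against the standard basis of $\R G$ gives $A_v^*z=\sum_g z_g\,gv$. Substituting $z=(A_ux)^{\odot k}$ then yields the stated formula for $Q_{u,v,k}$.

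\emph{Spanning.} By \cref{prop:poly-sym} we identify $\Poly_k(L,K)^G$ with $\Hom_G(\Sym^kL,K)$. Under this identification, $Q_{u,v,k}$ corresponds to the $G$-average
$$\mathcal R(T_{u,v}),\qquad T_{u,v}(y)=\ip{y}{u^k}\,v,\qquad \mathcal R(T)=|G|^{-1}\sum_g gTg^{-1}.$$
To see this, note that $\ip{x^k}{g u^k}=\ip{x}{gu}^k$, so
$$(gT_{u,v}g^{-1})(x^k)=\ip{x}{gu}^k\,gv.$$
The operators $T_{u,v}$ are the rank-one maps $y\mapsto\ip{y}{w}v$ with $w$ a pure power $u^k$. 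Since pure powers span $\Sym^kL$, bilinearity shows that these operators span all of $\Hom(\Sym^kL,K)$. The Reynolds operator $\mathcal R$ is linear and surjects onto $\Hom_G(\Sym^kL,K)$, because it fixes every invariant map. It follows that the $Q_{u,v,k}$ span $\Poly_k(L,K)^G$.

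The step that needs care is the spanning claim: a pure power $u^k$ can only be paired with one vector $v$ at a time, so spanning is not immediate. It goes through because $(w,v)\mapsto \mathcal R(\ip{\cdot}{w}v)$ is bilinear and the pure powers $w=u^k$ span $\Sym^kL$. The case $k=0$ is consistent with this argument, since then $Q=|G|^{-1}\sum_g gv$ ranges over $K^G$. An alternative route is to argue by orthogonality: if $P$ is invariant and orthogonal to every $Q_{u,v,k}$ under the trace pairing, then $\ip{P(u^k)}{v}=0$ for all $u$ and $v$, which forces $P=0$.
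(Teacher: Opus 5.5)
Your proof is correct and follows essentially the same route as the paper: both realise $Q_{u,v,k}$ as the $G$-average of rank-one maps built from pure $k$th powers, which span the non-equivariant space, and conclude because averaging projects onto the equivariant subspace. The only difference is where you work: you use $\Hom(\Sym^kL,K)$ and the fact that pure powers span $\Sym^kL$, whereas the paper works directly with the polynomials $\ip{x}{u}^k v$ spanning $\Poly_k(L,K)$.
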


\begin{proof}
Equivariance of $A_u$ is the identity
$(A_u(hx))_g=\ip{x}{h^{-1}gu}=(A_ux)_{h^{-1}g}$, and $R_v$ is a multiple of the
adjoint of an equivariant map, hence equivariant. Since $A_v^*e_g=gv$, the
displayed formula for $Q_{u,v,k}$ follows.

Pure powers of linear forms span the homogeneous scalar polynomials of degree
$k$, so the maps $P_{u,v}(x)=\ip{x}{u}^kv$ span $\Poly_k(L,K)$. Averaging, that is,
$$
\mathcal R(P)(x)=|G|^{-1}\sum_{g}gP(g^{-1}x),
$$
projects
$\Poly_k(L,K)$ onto $\Poly_k(L,K)^G$, and orthogonality of the action gives
$\mathcal R(P_{u,v})=Q_{u,v,k}$. A projection maps a spanning set of the
ambient space to a spanning set of its image.
\end{proof}

The proposition gives a universal, but generally inefficient, way to match any prescribed equivariant polynomial components through a finite degree
bound $D$. For each required degree $k$, choose finitely many
$Q_{u,v,k}$ spanning the desired component, with $u$ normalised so that the
coordinates of $A_uX$ have variance one.

By \cref{prop:hermite-control}, one may choose a piecewise-linear activation
whose Hermite coefficients vanish in all degrees $0,\dots,D$ except degree
$k$. Then \eqref{eq:master} shows that the corresponding layer contributes,
up to degree $D$, only the chosen degree-$k$ coordinate polynomial. Summing
over the required degrees gives the prescribed chaos components.

Thus copies of the regular representation are sufficient to recover every
equivariant polynomial direction. The cost is that each regular block has $|G|$ hidden coordinates, and spanning several directions may require several such blocks.

\vspace{0.7cm}
\section{Subset representations of the symmetric group}
\label{sec:subsets}

We now make the ambient polynomial spaces from \cref{sec:gaussian} explicit for an important family of permutation representations. Subset representations model vertex, edge, and higher-order subset features in equivariant graph networks. This section describes the full polynomial spaces against which the coordinate-selected spaces of \cref{sec:coordinatewise,sec:architecture} can be compared.

\begin{definition}
Let $[n]=\{1,\dots,n\}$, with $[0]=\varnothing$,
and let $S_n$ be its permutation group. For $r\ge0$, define
$$
X_{n,r}:=\binom{[n]}r
=
\{A\subseteq[n]:|A|=r\}.
$$
The \emph{$r$-subset representation} is the real vector space
$$
V_{n,r}
:=
\R[X_{n,r}]
=
\bigoplus_{A\in X_{n,r}}\R e_A,
$$
equipped with the inner product for which the basis
$\{e_A:A\in X_{n,r}\}$ is orthonormal. The action of $S_n$ is defined by
$g\cdot e_A=e_{gA}$, where $gA=\{g(a):a\in A\}$.
\end{definition}

The space $V_{n,0}\cong\R$ is the trivial representation. Intuitively, $V_{n,1}$ consists of vertex features, $V_{n,2}$ of features on unordered pairs of distinct
vertices, and $V_{n,3}$ of features on unordered triples.
Pair features may be viewed as real edge weights. Triple features are assigned to all triples, whether or not those triples form triangles in a particular graph.
Throughout, these spaces contain arbitrary real-valued features on every subset of the indicated size.

Fix an input subset size $r\ge1$, an output subset size
$s\ge0$, and a polynomial degree $d\ge0$. Our objective
is to construct an explicit basis of
$$
\Poly_d(V_{n,r},V_{n,s})^{S_n}
\cong
\Hom_{S_n}(\Sym^dV_{n,r},V_{n,s}).
$$

An element of this space has the form
$$
P(x)=\sum_{Q\in X_{n,s}}P_Q(x)e_Q,
$$
where each $P_Q$ is a homogeneous polynomial of degree
$d$ in the input coordinates $x_A$. Equivariance means $P(g\cdot x)=g\cdot P(x)$, or equivalently, $P_{gQ}(g\cdot x)=P_Q(x)$ for $g\in S_n,\ Q\in X_{n,s}$.

A degree-$d$ input monomial
$x_{A_1}\cdots x_{A_d}$ records $d$ input subsets,
with repetitions allowed. An output coordinate also
specifies an $s$-subset $Q$. The basis constructed below
is indexed by these configurations up to simultaneous
relabelling of the vertices. Each basis map sums the
monomials belonging to one such configuration type.

\subsection{Symmetric powers are multihypergraph modules}

To describe symmetric powers combinatorially, for a finite $G$-set $X$ and $d\ge0$, let $\MSet_d(X)$ denote the size-$d$ multisets on $X$ with the induced action.

\begin{proposition}
\label{prop:sym-mset}
For a finite group $G$ and a finite $G$-set $X$,
$$
\Sym^d(\R[X])\cong\R[\MSet_d(X)]
$$
as $G$-representations. If $m_\omega$ represents an orbit
$\omega\in\MSet_d(X)/G$ and
$H_\omega=\operatorname{Stab}_G(m_\omega)$, then
$$
\Sym^d(\R[X])
\cong
\bigoplus_{\omega\in\MSet_d(X)/G}
\Ind_{H_\omega}^G\one.
$$
\end{proposition}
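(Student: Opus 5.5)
The plan is to identify $\Sym^d(\R[X])$ with the polynomial ring in the variables $x_a$ ($a\in X$), graded by degree, and then read off the monomial basis. Polynomial functions are not needed; working inside the symmetric algebra suffices. Write $e_a$ for the basis vectors of $\R[X]$. The symmetric algebra of a free module on the basis $X$ is the polynomial ring $\R[x_a:a\in X]$, so the classes $e_{a_1}\cdots e_{a_d}$ in $\Sym^d(\R[X])$ are well defined. Two such classes agree exactly when the underlying multisets $\{a_1,\dots,a_d\}$ agree. Moreover, these classes form a basis of $\Sym^d(\R[X])$.

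Concretely, I would check the basis claim as follows. Pure tensors $e_{a_1}\otimes\cdots\otimes e_{a_d}$ span $\R[X]^{\otimes d}$, and their images depend only on the multiset, so the multiset classes span. For linear independence, use the symmetrisation map $L^{\otimes k}\to L^{\otimes k}$, $t\mapsto \frac{1}{k!}\sum_{\pi}\pi t$. It kills every $t-\pi t$, so it factors through $\Sym^d$. It sends the class of a multiset $m$ to the average of the tensors indexed by the orderings of $m$. These averages have disjoint supports in the tensor basis for distinct multisets, so they are linearly independent. This gives a linear isomorphism $\R[\MSet_d(X)]\to\Sym^d(\R[X])$, $e_m\mapsto\prod_{a\in m}e_a$.

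Next I check equivariance. The diagonal action gives $g\cdot(e_{a_1}\cdots e_{a_d})=e_{ga_1}\cdots e_{ga_d}$, which is the image of $e_{gm}$. Hence the isomorphism intertwines the permutation action on multisets with the action on $\Sym^d$, and the first claim follows.

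For the second claim, decompose the $G$-set $\MSet_d(X)$ into orbits $\omega$. The permutation module splits as $\bigoplus_\omega\R[\omega]$. For each orbit, the orbit–stabiliser bijection $G/H_\omega\to\omega$, $gH_\omega\mapsto g m_\omega$, is an isomorphism of $G$-sets. This gives $\R[\omega]\cong\R[G/H_\omega]=\Ind_{H_\omega}^G\one$, using the standard identification of induced trivial modules with coset permutation modules.

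The only step needing care is the linear independence of the multiset classes. The symmetrisation argument handles it, and it is valid because we are over $\R$, where $k!$ is invertible. Everything else is formal.
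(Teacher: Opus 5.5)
Your proposal is correct and follows the paper's proof: a monomial basis $e^m$ of $\Sym^d(\R[X])$ indexed by multisets, the identity $g e^m=e^{gm}$, and orbit--stabiliser to identify each orbit span with $\Ind_{H_\omega}^G\one$. The only difference is that you prove the monomial-basis claim by symmetrisation, where the paper takes it as standard; the switch between $k$ and $d$ in that step is just a notational slip.
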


\begin{proof}
The monomials $e^m=\prod_{x\in X}e_x^{m(x)}$ for $ m\in\MSet_d(X)$
form a basis of $\Sym^d(\R[X])$, and $g e^m=e^{gm}$.
Each orbit therefore spans a permutation module
isomorphic to $\R[G/H_\omega]=\Ind_{H_\omega}^G\one$.
\end{proof}

\begin{definition}
An \emph{$r$-uniform multihypergraph on $[n]$ with $d$ edges} is a
multiset
$$
\mathcal H=\{E_1,\dots,E_d\},
\qquad
E_i\in X_{n,r},
$$
where repeated edges are allowed.
\begin{itemize}
    \item A vertex is \emph{active} if it lies in at least one edge of
    $\mathcal H$. The subhypergraph obtained
    by deleting all isolated vertices is called the \emph{active core}.
    
    \item Two multihypergraphs are \emph{isomorphic} if there is a bijection
    between their vertex sets that preserves edge multiplicities. We write
    $\Aut(\mathcal H)$ for the automorphism group of $\mathcal H$. 
    
    \item Let $\mathfrak H_{r,d}$ denote the set of isomorphism classes of
    $r$-uniform multihypergraphs with $d$ edges and no isolated vertices. For $\mathcal H\in\mathfrak H_{r,d}$, write $v(\mathcal H)$ for its number of vertices.
\end{itemize}
\end{definition}

Take $X=X_{n,r}$. A monomial
$e_{A_1}\cdots e_{A_d}$ corresponds to the edge
multiset $\{A_1,\dots,A_d\}$, and two monomials lie in the same
$S_n$-orbit exactly when their active cores are isomorphic.
Every core satisfies $v(\mathcal H)\le rd$,
with equality precisely when its edges are pairwise disjoint.

For $v(\mathcal H)\le n$, let $P_{\mathcal H}(n)$ denote
the span of all monomials whose active core is isomorphic
to $\mathcal H$.

The active core is exactly the orbit data, so it gives the desired decomposition of the symmetric power.

\begin{theorem}
\label{thm:hypergraph-decomposition}
For $n\ge0$, $r\ge1$ and $d\ge0$,
$$
\Sym^dV_{n,r}
=
\bigoplus_{\substack{[\mathcal H]\in\mathfrak H_{r,d}\\
                    v(\mathcal H)\le n}}
P_{\mathcal H}(n), \qquad \text{where} \qquad P_{\mathcal H}(n)
\cong
\Ind_{\Aut(\mathcal H)\times S_{n-v(\mathcal H)}}^{S_n}
\one.
$$
The subgroup acts by automorphisms on a labelled copy of
the core and by arbitrary permutations on its complement.
For $n\ge rd$, every core type occurs.

For each partition $\lambda\vdash n$, the multiplicity
of the irreducible Specht module $S^\lambda$ is
$$
[P_{\mathcal H}(n):S^\lambda]
=
\dim (S^\lambda)^{\Aut(\mathcal H)\times S_{n-v(\mathcal H)}}.
$$
\end{theorem}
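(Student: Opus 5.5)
The plan is to apply \cref{prop:sym-mset} with $X=X_{n,r}$ and $G=S_n$, then identify the orbits of $S_n$ on $\MSet_d(X_{n,r})$ with isomorphism classes of active cores. By \cref{prop:sym-mset}, $\Sym^dV_{n,r}$ has the monomial basis $e_{A_1}\cdots e_{A_d}$ indexed by edge multisets. Each $S_n$-orbit of monomials spans a permutation submodule, and $\Sym^dV_{n,r}$ is the direct sum of these orbit spans. So the first step is to show that two multisets $\{A_1,\dots,A_d\}$ and $\{B_1,\dots,B_d\}$ lie in the same $S_n$-orbit if and only if their active cores are isomorphic.

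For one direction, any $g\in S_n$ carrying one multiset to the other restricts to a multiplicity-preserving bijection between the active vertex sets. For the converse, an isomorphism $\psi$ between the cores is a bijection between two subsets of $[n]$ of equal size $v$. It extends to a permutation of $[n]$ by choosing any bijection between the complements, which have the same size $n-v$, and this permutation maps one multiset to the other. Hence the orbits are indexed by $[\mathcal H]\in\mathfrak H_{r,d}$ with $v(\mathcal H)\le n$, since a core on $v$ vertices embeds in $[n]$ exactly when $v\le n$. This gives the direct sum with $P_{\mathcal H}(n)$ the span of the corresponding orbit. For $n\ge rd$ every core embeds, because $v(\mathcal H)\le rd$.

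Next, to identify $P_{\mathcal H}(n)$ as an induced module, fix a labelled copy of $\mathcal H$ on the vertex set $\{1,\dots,v\}$, where $v=v(\mathcal H)$, and compute the stabiliser of the corresponding multiset $m$. Any $g$ fixing $m$ preserves the set of active vertices, since that set is determined by $m$. So $g$ preserves $\{1,\dots,v\}$ and its complement. Its restriction to $\{1,\dots,v\}$ preserves edge multiplicities and is therefore an automorphism of the core, while its restriction to the complement is arbitrary. Conversely, every such pair fixes $m$. Thus $H_\omega=\Aut(\mathcal H)\times S_{n-v}$, embedded block-diagonally, and \cref{prop:sym-mset} gives $P_{\mathcal H}(n)\cong\Ind^{S_n}_{\Aut(\mathcal H)\times S_{n-v}}\one$.

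For the multiplicity formula, use Frobenius reciprocity:
$$
\dim\Hom_{S_n}(\Ind_H^{S_n}\one,S^\lambda)=\dim\Hom_H(\one,S^\lambda)=\dim(S^\lambda)^H.
$$
This dimension equals the multiplicity $[P_{\mathcal H}(n):S^\lambda]$ because Specht modules are absolutely irreducible over $\R$, so $\End_{S_n}(S^\lambda)=\R$ and complete reducibility applies. The main point requiring care is the stabiliser computation, specifically that the active set is intrinsic to the multiset. The remaining steps are bookkeeping.
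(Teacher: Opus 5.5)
Your proposal is correct and follows essentially the same route as the paper's proof. That proof also identifies monomial orbits with isomorphism classes of active cores, by restricting $g$ in one direction and extending a core isomorphism arbitrarily on the complement in the other; it computes the stabiliser as $\Aut(\mathcal H)\times S_{n-v}$ and obtains the multiplicity from Frobenius reciprocity together with absolute irreducibility of the real Specht modules. Your explicit remark that the active vertex set is determined by the multiset, so that any stabilising permutation must preserve it, makes precise a step the paper leaves implicit.
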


\begin{proof}
See \cref{app:hypergraph-decomposition}.
\end{proof}

This is a decomposition into orbit spans, not generally
into irreducible representations. Each summand records
one overlap pattern among the input subsets. Importantly, the indexing set stabilises for $n\ge rd$, although the modules
$P_{\mathcal H}(n)$ themselves still depend on $n$.

\begin{example} \label{ex:r2d2-types}
For $r=2$, $d=2$ and $n\ge4$, there are three core types:
a repeated edge, two adjacent edges and two disjoint edges.
Writing $e_{ab}=e_{\{a,b\}}$, the representatives are $e_{12}^2$, $e_{12}e_{23}$ and $e_{12}e_{34}$, respectively. Hence
$$
\Sym^2V_{n,2} = P_{\mathrm{rep}}(n) \oplus P_{\mathrm{adj}}(n) \oplus P_{\mathrm{disj}}(n).
$$
For general $r$, the orbit of $e_Ae_B$ is determined by
$t=|A\cap B|$, where $\max(0,2r-n)\le t\le r$. If $t<r$, its stabiliser is $S_t\times(S_{r-t}\wr S_2)\times S_{n-2r+t}$.
The factors permute the intersection, the two exclusive
parts (allowing their exchange), and the unused vertices.
If $t=r$, the edge is repeated and the stabiliser is
$S_r\times S_{n-r}$.
\end{example}

Symmetric powers of permutation modules have also been studied in the modular setting \cite{Jiang2021}. Here the orbit description is useful because it keeps the overlap data visible without first decomposing into irreducibles. For example, at $n=rd$ the
disjoint-edge summand is the Foulkes module $\Ind_{S_r\wr S_d}^{S_{rd}}\one$, whose general irreducible decomposition is an open
problem \cite{PagetWildon2016}.

\subsection{Polynomial equivariants: the marked-orbit basis}

The marked-orbit basis describes all homogeneous polynomial maps from $r$-subset to $s$-subset features. We begin with the classical linear case.

\begin{proposition}\label{prop:orbit-kernel}
For finite $G$-sets $X,Y$ we have $\Hom_G(\R[X],\R[Y])\cong\R[X\times Y]^G$, with basis the maps
$$
T_\cO(e_x)=\sum_{y:(x,y)\in\cO}e_y
$$
indexed by the $G$-orbits $\cO\subseteq X\times Y$.
\end{proposition}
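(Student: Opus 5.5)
The plan is to identify equivariant linear maps $\R[X]\to\R[Y]$ with their matrices, and then solve the equivariance condition directly in coordinates. A linear map $T:\R[X]\to\R[Y]$ is determined by its matrix entries $t_{yx}$, defined by $Te_x=\sum_y t_{yx}e_y$. I will view this matrix as a function $t:X\times Y\to\R$, $t(x,y)=t_{yx}$, and hence as a vector $\sum_{(x,y)}t(x,y)\,e_{(x,y)}$ in $\R[X\times Y]$. This assignment is a linear isomorphism $\Hom(\R[X],\R[Y])\cong\R[X\times Y]$. The first step is to check that it intertwines the $G$-actions. Here $G$ acts on $\Hom$ by $(g\cdot T)=gTg^{-1}$, as fixed in \cref{sec:prelim}, and it acts on $X\times Y$ diagonally.

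Because both bases are permuted by $G$, I compute $(gTg^{-1})e_x=gTe_{g^{-1}x}=\sum_y t(g^{-1}x,y)e_{gy}=\sum_{y}t(g^{-1}x,g^{-1}y)e_y$. So the matrix of $g\cdot T$ is the function $(x,y)\mapsto t(g^{-1}x,g^{-1}y)$, which is exactly the permutation action on $\R[X\times Y]$. Taking fixed points on both sides then gives $\Hom_G(\R[X],\R[Y])\cong\R[X\times Y]^G$. On the right-hand side, $G$-invariance just means $t$ is constant on diagonal orbits.

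Next, the fixed subspace of any permutation module $\R[Z]$ has as basis the orbit indicator vectors $\one_\cO=\sum_{z\in\cO}e_z$, one for each $G$-orbit $\cO\subseteq Z$. These vectors span the fixed space because an invariant function is constant on orbits, and they are linearly independent because their supports are disjoint and nonempty. Applying this with $Z=X\times Y$ and translating $\one_\cO$ back into a matrix, the map corresponding to $\cO$ sends $e_x$ to $\sum_{y:(x,y)\in\cO}e_y$, which is exactly $T_\cO$.

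There is no real obstacle in this argument. The only point needing care is the orientation convention: the matrix index must be $(x,y)$ with $x$ the source and $y$ the target, and the action on $\Hom$ must carry the $g^{-1}$ correctly. This ensures the intertwining is with the diagonal action rather than some twisted action. Orthogonality of the permutation action, meaning $g^*=g^{-1}$, is what makes this bookkeeping consistent with the paper's adjoint-based conventions.
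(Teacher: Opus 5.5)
Your proposal is correct and is essentially the paper's argument: you identify $T$ with its kernel $t(x,y)$, note that equivariance is exactly invariance of $t$ under the diagonal action on $X\times Y$, and take the orbit indicators as the basis. You also spell out the intertwining computation and the orbit-indicator basis of the fixed space, which the paper leaves implicit; your closing remark about orthogonality is harmless but unnecessary, since the coordinate computation only uses that $G$ permutes the two bases.
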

\begin{proof}
A linear map $T$ with matrix $k_T(x,y)$ is equivariant if and only if $k_T(gx,gy)=k_T(x,y)$, that is, if and only if $k_T$ is constant on diagonal orbits. See also \cite[Lemma~2E.3]{GTW2025} and \cite{Maron2019IGN}.
\end{proof}

By \cref{prop:sym-mset} we may apply this with $X=\MSet_d(\Xnr)$, which leads to the following description of the orbits.

\begin{definition}\label{def:marked}
A \emph{marked degree-$d$ $(r,s)$-interaction} is a pair $(\mathcal A,Q)$ with $\mathcal A\in\MSet_d(\Xnr)$ and $Q\in\Xns$, that is, an $r$-uniform $d$-edge multihypergraph on $[n]$ together with a distinguished $s$-subset of vertices. Its \emph{type} is its orbit under simultaneous relabelling. Marked vertices need not be active.
\end{definition}

\begin{theorem}\label{thm:marked-basis}
$\Poly_d(\Vnr,\Vns)^{S_n} \cong \Hom_{S_n}(\Sym^d\Vnr,\Vns)$, with a basis indexed by the diagonal $S_n$-orbits $\cO\subseteq \MSet_d(\Xnr)\times\Xns$. For such an orbit, define
$$
\Psi_\cO(x)
:=
\sum_{Q\in\Xns}
\left(
\sum_{\mathcal A:(\mathcal A,Q)\in\cO}
x^{\mathcal A}
\right)e_Q,
\qquad
x^{\mathcal A}
:=
\prod_{A\in\Xnr}x_A^{m_{\mathcal A}(A)}.
$$
Then the maps $\Psi_\cO$ form a basis of $\Poly_d(\Vnr,\Vns)^{S_n}$.
\end{theorem}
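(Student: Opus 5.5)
The plan is to combine three earlier results: \cref{prop:poly-sym}, \cref{prop:sym-mset} and \cref{prop:orbit-kernel}. First, \cref{prop:poly-sym} gives the isomorphism $\Poly_d(\Vnr,\Vns)^{S_n}\cong\Hom_{S_n}(\Sym^d\Vnr,\Vns)$, sending $T$ to the map $x\mapsto T(x^d)$. Next, \cref{prop:sym-mset} with $X=\Xnr$ identifies $\Sym^d\Vnr$ with the permutation module $\R[\MSet_d(\Xnr)]$. Under this identification the monomial basis vector $e^{\mathcal A}$ corresponds to the multiset $\mathcal A$. Then \cref{prop:orbit-kernel}, applied to the $S_n$-sets $\MSet_d(\Xnr)$ and $\Xns$, gives a basis of $\Hom_{S_n}(\Sym^d\Vnr,\Vns)$. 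It consists of the maps
$$
T_\cO\bigl(e^{\mathcal A}\bigr)=\sum_{Q:(\mathcal A,Q)\in\cO}e_Q,
$$
indexed by the diagonal orbits $\cO\subseteq\MSet_d(\Xnr)\times\Xns$. Each such orbit is exactly a type of marked interaction in the sense of \cref{def:marked}.

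It remains to identify the polynomial $x\mapsto T_\cO(x^d)$ with $\Psi_\cO$, up to harmless rescaling. Expand $x=\sum_A x_Ae_A$. In $\Sym^d\Vnr$ we then have
$$
x^d=\sum_{\mathcal A}\binom{d}{m_{\mathcal A}}x^{\mathcal A}e^{\mathcal A},
$$
where $\binom{d}{m_{\mathcal A}}$ is the multinomial coefficient of the multiplicity vector. This gives
$$
T_\cO(x^d)=\sum_Q\Bigl(\sum_{\mathcal A:(\mathcal A,Q)\in\cO}\binom{d}{m_{\mathcal A}}x^{\mathcal A}\Bigr)e_Q.
$$
The multinomial coefficient depends only on the multiplicity profile of $\mathcal A$, and this profile is invariant under relabelling. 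It is therefore constant on the orbit $\cO$, say equal to $c_\cO>0$, and so $T_\cO(x^d)=c_\cO\Psi_\cO(x)$. Rescaling the basis vectors of a basis by nonzero scalars gives another basis, so the maps $\Psi_\cO$ form a basis.

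The one point needing care is this multinomial factor. Equivalently, one must fix the convention relating the monomials $e^{\mathcal A}$ in $\Sym^d$ to the coordinate monomials $x^{\mathcal A}$. I expect this to be the only delicate step, and the orbit-invariance of $m_{\mathcal A}$ settles it.

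As a sanity check, I would also verify directly that each $\Psi_\cO$ is equivariant, using $P_{gQ}(g\cdot x)=P_Q(x)$. Since $(g\cdot x)^{g\mathcal A}=x^{\mathcal A}$ and $\cO$ is $g$-stable, the coefficient of $e_{gQ}$ in $\Psi_\cO(g\cdot x)$ equals the coefficient of $e_Q$ in $\Psi_\cO(x)$.

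Linear independence can also be seen directly. The distinct orbits involve disjoint sets of pairs $(\mathcal A,Q)$, and the monomials $x^{\mathcal A}e_Q$ are linearly independent in $\Poly_d(\Vnr,\Vns)$. So the $\Psi_\cO$ have disjoint supports in a basis.

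Spanning can likewise be checked directly. An equivariant $P$ has coefficient of $x^{\mathcal A}$ in $P_Q$ constant along diagonal orbits, by comparing coefficients in $P_{gQ}(g\cdot x)=P_Q(x)$. Hence $P=\sum_\cO\lambda_\cO\Psi_\cO$. This direct route even avoids the multinomial bookkeeping, and I would present it as the main proof, citing the chain of isomorphisms for the identification with $\Hom_{S_n}$.
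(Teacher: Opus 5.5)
Your proposal is correct and follows essentially the paper's route. The paper also combines \cref{prop:sym-mset} with \cref{prop:orbit-kernel} to see that equivariance means the coefficients $c_{\mathcal A,Q}$ are constant on diagonal orbits, so that the orbit indicators $\Psi_\cO$ form a basis, and it remarks right after the proof that the multinomial factors from polarisation are constant on orbits and only rescale basis vectors; your computation $T_\cO(x^d)=c_\cO\Psi_\cO(x)$ and your direct coefficient comparison simply spell out those two steps in more detail.
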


\begin{proof}
By \cref{prop:sym-mset}, a degree-$d$ monomial is indexed by a
multiset $\mathcal A\in\MSet_d(\Xnr)$. Hence a polynomial map
$\Vnr\to\Vns$ is determined by coefficients
$c_{\mathcal A,Q}$ indexed by $(\mathcal A,Q)\in\MSet_d(\Xnr)\times\Xns$.
By \cref{prop:orbit-kernel}, such a map is $S_n$-equivariant exactly
when these coefficients are constant on diagonal $S_n$-orbits.
The corresponding orbit indicators are precisely the maps
$\Psi_\cO$.
\end{proof}

Under the polarisation identification of \cref{prop:poly-sym}, the usual multinomial factors only rescale the multiset basis vectors. These factors are constant on $S_n$-orbits, so they do not change the orbit indexing or the basis statement.

Orbit-indexed graph-polynomial bases also appear in \cite{Puny2023}.The orbit basis is explicit, but we also want its dimension and stable range. Fixing the active core reduces the count to the possible positions of the output mark.

\begin{proposition}
\label{prop:marked-count}
Fix an active core $\cH$ with vertex set $U=V(\cH)$, and write $v=v(\cH)$.
A marked type with core $\cH$ is determined by $q:=|Q\cap U|$
together with the $\Aut(\cH)$-orbit of the active part $Q\cap U\in\binom{U}{q}$.
Consequently,
\begin{equation}\label{eq:finite-dim}
\dim\Hom_{S_n}(\Sym^d\Vnr,\Vns)
=
\sum_{\substack{[\cH]\in\mathfrak H_{r,d}\\ v(\cH)\le n}}
\;
\sum_{q=\max(0,\,s-(n-v(\cH)))}^{\min(s,\,v(\cH))}
\left|
\binom{V(\cH)}{q}\Big/\Aut(\cH)
\right|.
\end{equation}
\end{proposition}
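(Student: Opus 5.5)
By \cref{thm:marked-basis}, the dimension on the left equals the number of diagonal $S_n$-orbits on $\MSet_d(\Xnr)\times\Xns$. The plan is to count these orbits in two stages: first by the orbit of the multiset alone, and then by the orbits of the stabiliser on the marks.

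\emph{Stage one: fibre over the core.} Project each pair $(\mathcal A,Q)$ to $\mathcal A$. By \cref{thm:hypergraph-decomposition}, the $S_n$-orbits on $\MSet_d(\Xnr)$ are indexed by core types $[\cH]\in\mathfrak H_{r,d}$ with $v(\cH)\le n$. Fix a labelled representative $\mathcal A_0$ whose active vertex set is $U$, with $|U|=v$. By the standard orbit--stabiliser correspondence, the diagonal orbits lying over this $S_n$-orbit are in bijection with the orbits of
$$
H=\operatorname{Stab}_{S_n}(\mathcal A_0)=\Aut(\cH)\times S_{n-v}
$$
on $\Xns$. Here $\Aut(\cH)$ acts on $U$, and $S_{n-v}$ permutes the complement $[n]\setminus U$ arbitrarily.

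\emph{Stage two: $H$-orbits on $s$-subsets.} Write each $s$-subset as $Q=(Q\cap U)\sqcup(Q\setminus U)$. The factor $S_{n-v}$ acts transitively on the $(s-q)$-subsets of the complement, where $q=|Q\cap U|$. This factor also commutes with the $\Aut(\cH)$ factor, which fixes the complement pointwise. Hence the $H$-orbits on $\Xns$ correspond to pairs consisting of an integer $q$ and an $\Aut(\cH)$-orbit in $\binom{U}{q}$. This is the claimed description of marked types.

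The admissible $q$ are those with $0\le q\le v$ and $0\le s-q\le n-v$, that is, $\max(0,s-(n-v))\le q\le\min(s,v)$. For each such $q$, both parts are realisable, since the bounds guarantee that $\binom{U}{q}$ and $\binom{[n]\setminus U}{s-q}$ are nonempty. Summing over core types and over $q$ then gives \eqref{eq:finite-dim}.

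The only point needing care is the identification of the stabiliser in stage one. An element $g$ fixing $\mathcal A_0$ must preserve the active set $U$, since the active vertices are exactly the union of the edges. Its restriction to $U$ therefore preserves edge multiplicities, so it lies in $\Aut(\cH)$. Conversely, any automorphism of the core combined with any permutation of the complement fixes $\mathcal A_0$. This is already implicit in \cref{thm:hypergraph-decomposition}, so I would cite it there rather than reprove it. The remaining steps are routine bookkeeping.
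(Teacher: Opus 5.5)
Your proof is correct and follows essentially the same route as the paper: both reduce the count, for each core type, to the number of $\Aut(\cH)\times S_{n-v}$-orbits on $s$-subsets, and then split $Q$ into $Q\cap U$ and $Q\setminus U$, using transitivity of $S_{n-v}$ on the inactive part. The only cosmetic difference is how you reach that orbit count. The paper obtains it from \cref{thm:hypergraph-decomposition} and Frobenius reciprocity as $\dim(\Vns)^{\Aut(\cH)\times S_{n-v}}$, whereas you fibre the diagonal orbits of \cref{thm:marked-basis} over the multiset orbits via orbit--stabiliser.
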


\begin{proof}
By \cref{thm:hypergraph-decomposition} and Frobenius reciprocity,
the contribution of a fixed core $\cH$ is $\dim(\Vns)^{\Aut(\cH)\times S_{n-v}}$.
Since $\Vns=\R[X_{n,s}]$ is a permutation representation, this
dimension is the number of $(\Aut(\cH)\times S_{n-v})$-orbits on
$s$-subsets $Q\subseteq[n]$.

Write $q=|Q\cap U|$. $\Aut(\cH)$ acts on the active part $Q\cap U$, while $S_{n-v}$ is transitive on all $(s-q)$-subsets of the inactive vertices. Thus, for fixed $q$,
the orbit of $Q$ is determined exactly by the $A$-orbit of
$Q\cap U\in\binom{U}{q}$.

Such a marking exists precisely when $0\le q\le v$ and $0\le s-q\le n-v$, giving the stated limits. Summing over the core types proves \eqref{eq:finite-dim}.
\end{proof}

By Burnside's lemma, evaluating \eqref{eq:finite-dim} requires only the cycle structures of the automorphism groups. \Cref{thm:molien-polya} computes the same number by summing over conjugacy classes of $S_n$ instead.

The counting formula also shows exactly when increasing $n$ can no longer create new marked types.

\begin{theorem}\label{prop:sharp}
Let $r,d\ge1$ and $s\ge0$, and write $D(n):=\dim\Hom_{S_n}(\Sym^d\Vnr,\Vns)$. Then:

\begin{enumerate}
    \item $D(n)$ is non-decreasing in $n$.

    \item For $n\ge rd+s$,
    $$
    D(n)
    =
    \sum_{[\cH]\in\mathfrak H_{r,d}}
    \sum_{q=0}^{\min(s,v(\cH))}
    \left|
    \binom{V(\cH)}q\Big/\Aut(\cH)
    \right|,
    $$
    so $D(n)$ is independent of $n$ in this range.

    \item The threshold is sharp: $D(rd+s-1)<D(rd+s)$.
\end{enumerate}

In the stable range, the basis $\{\Psi_\cO\}$ is indexed by a set
independent of $n$, so one coefficient vector defines one equivariant
polynomial layer simultaneously for every $n\ge rd+s$.
\end{theorem}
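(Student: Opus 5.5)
All three parts will come from the counting formula \eqref{eq:finite-dim} of \cref{prop:marked-count}, read term by term as a function of $n$. Write $v=v(\cH)$. For a fixed core type $[\cH]\in\mathfrak H_{r,d}$ the inner sum runs over $q$ from $\max(0,s-(n-v))$ to $\min(s,v)$. Each summand $\bigl|\binom{V(\cH)}{q}/\Aut(\cH)\bigr|$ depends only on $\cH$ and $q$, not on $n$. So $n$ enters in only two places: the outer condition $v(\cH)\le n$ and the lower limit $\max(0,s-(n-v))$.

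\emph{Part (a).} Increasing $n$ to $n+1$ can only add core types, since the condition $v\le n$ becomes weaker. It also weakly lowers the lower limit of each inner sum. All summands are nonnegative integers, so $D(n)\le D(n+1)$.

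\emph{Part (b).} Every core satisfies $v(\cH)\le rd$. Hence for $n\ge rd+s$ every core type appears, and $s-(n-v)\le s-(rd+s)+rd=0$, so every lower limit is $0$. Substituting gives the displayed $n$-independent formula.

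\emph{Part (c).} At $n=rd+s-1$, take the core $\cH_0$ consisting of $d$ pairwise disjoint edges, so $v=rd\le n$. Its lower limit is $\max(0,s-(s-1))=1$, whereas at $n=rd+s$ the lower limit is $0$. The $q=0$ term is $\bigl|\binom{V(\cH_0)}{0}/\Aut(\cH_0)\bigr|=1$, and it is present at $n=rd+s$ but absent at $n=rd+s-1$. Every other term is weakly larger at $rd+s$ by the argument of part (a). Therefore $D(rd+s-1)<D(rd+s)$.

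Combinatorially, the lost type is the one whose mark avoids all $rd$ active vertices. That requires $s$ inactive vertices, which is impossible when $n=rd+s-1$. The proof needs $r,d\ge1$ only so that $\mathfrak H_{r,d}$ contains this disjoint core. The case $s=0$ is also covered, since then $v=rd\le n=rd-1$ fails and the whole core type is missing.

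\emph{Final sentence of the statement.} By \cref{thm:marked-basis} the basis is indexed by marked types. By \cref{prop:marked-count}, a marked type is the triple of a core $[\cH]$, a number $q$, and an $\Aut(\cH)$-orbit of $q$-subsets of $V(\cH)$. For $n\ge rd+s$ the set of admissible triples is independent of $n$, by part (b). A coefficient vector on these triples then defines $\sum c_\cO\Psi_\cO$ for every such $n$.

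The main obstacle is making the bijection between marked types and these triples honest, namely that the orbit is determined by $\cH$, $q$ and the orbit of $Q\cap U$. That is already contained in \cref{prop:marked-count}, so the remaining work is bookkeeping.
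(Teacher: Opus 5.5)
Your proposal is correct and matches the paper's proof essentially line by line. Parts (a) and (b) are read off from \eqref{eq:finite-dim}: the set of admissible cores only grows with $n$, and the lower limits are non-increasing and equal $0$ once $n\ge rd+s$. Sharpness comes from the $q=0$ marking of the disjoint core $\cH_0$, and the case $s=0$ is handled because $\cH_0$ does not embed at $n=rd-1$. Your justification of the final sentence via the $n$-independent triples (core, $q$, $\Aut(\cH)$-orbit) is a harmless elaboration that the paper leaves implicit.
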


\begin{proof}
In \eqref{eq:finite-dim} the set of admissible types grows with $n$ and each lower summation limit is non-increasing in $n$, so $D$ is non-decreasing. For $n\ge rd+s$ we have $n-v\ge s$ for every core, so every lower limit is $0$ and no term depends on $n$, which gives \emph{(b)}.

For sharpness, let $\cH_0$ be the disjoint type, with $v(\cH_0)=rd$. At $n=rd+s-1$ the value $q=0$ would require $s-q\le n-v=s-1$, which fails, so $q=0$ is inadmissible for $\cH_0$. Also when $s=0$ the core $\cH_0$ does not embed at all. At $n=rd+s$ it becomes admissible and contributes exactly one further orbit, while no other term decreases. Hence the strict inequality.
\end{proof}

\subsection{Worked example: seven quadratic edge-to-vertex maps}\label{sec:seven}

Take $(r,d,s)=(2,2,1)$ and write $x_{uv}=x_{\{u,v\}}$. By \cref{thm:marked-basis} we must list the marked types: a repeated edge with the mark on it or off it; an adjacent pair with the mark at the centre, at an endpoint, or outside; a disjoint pair with the mark on an active vertex or outside. These give seven types (\cref{fig:seven-types}), hence $\dim\Hom_{S_n}(\Sym^2V_{n,2},V_{n,1})=7$ for $n\ge5$. At $n=3$ only four types fit, while at $n=4$ only the disjoint pair with an outside mark is absent. Thus the dimensions are $4,6,7,7,\dots$ for $n=3,4,5,6,\dots$.

\begin{figure}[H]
\centering
\begin{tikzpicture}[scale=1, every node/.style={font=\small}, vertex/.style={circle,draw,inner sep=1.4pt}, markedv/.style={circle,draw,fill=black,inner sep=2.1pt}]
\begin{scope}[shift={(0,0)}]
\node[markedv] (a) at (0,0) {}; \node[vertex] (b) at (1,0) {}; \draw[double] (a)--(b);
\node at (.5,-.55) {\shortstack{$\Phi_1$\\repeated: on}};
\end{scope}
\begin{scope}[shift={(3.2,0)}]
\node[vertex] (a) at (0,0) {}; \node[vertex] (b) at (1,0) {}; \node[markedv] at (.5,.65) {}; \draw[double] (a)--(b);
\node at (.5,-.55) {\shortstack{$\Phi_2$\\repeated: off}};
\end{scope}
\begin{scope}[shift={(6.4,0)}]
\node[vertex] (a) at (0,0) {}; \node[markedv] (b) at (.7,.35) {}; \node[vertex] (c) at (1.4,0) {}; \draw (a)--(b)--(c);
\node at (.7,-.55) {\shortstack{$\Phi_3$\\adjacent: centre}};
\end{scope}
\begin{scope}[shift={(10,0)}]
\node[markedv] (a) at (0,0) {}; \node[vertex] (b) at (.7,.35) {}; \node[vertex] (c) at (1.4,0) {}; \draw (a)--(b)--(c);
\node at (.7,-.55) {\shortstack{$\Phi_4$\\adjacent: endpoint}};
\end{scope}
\begin{scope}[shift={(1.2,-2.0)}]
\node[vertex] (a) at (0,0) {}; \node[vertex] (b) at (.7,.35) {}; \node[vertex] (c) at (1.4,0) {}; \node[markedv] at (.7,0.75) {}; \draw (a)--(b)--(c);
\node at (.7,-.55) {\shortstack{$\Phi_5$\\adjacent: outside}};
\end{scope}
\begin{scope}[shift={(5.2,-2.0)}]
\node[markedv] (a) at (0,0) {}; \node[vertex] (b) at (1,0) {}; \node[vertex] (c) at (2,0) {}; \node[vertex] (d) at (3,0) {}; \draw (a)--(b); \draw (c)--(d);
\node at (1.5,-.55) {\shortstack{$\Phi_6$\\disjoint: active}};
\end{scope}
\begin{scope}[shift={(9.4,-2.0)}]
\node[vertex] (a) at (0,0) {}; \node[vertex] (b) at (1,0) {}; \node[vertex] (c) at (2,0) {}; \node[vertex] (d) at (3,0) {}; \node[markedv] at (1.5,.75) {}; \draw (a)--(b); \draw (c)--(d);
\node at (1.5,-.55) {\shortstack{$\Phi_7$\\disjoint: outside}};
\end{scope}
\end{tikzpicture}
\caption{The seven marked interaction types for quadratic maps $V_{n,2}\to V_{n,1}$ in the stable range. Filled vertices carry the output mark $v$; double lines denote a repeated input edge.}
\label{fig:seven-types}
\end{figure}
Writing the $v$th output coordinate gives the following basis, using the same labels as \cref{fig:seven-types}.

\begin{align*}
(\Phi_1x)_v&=\sum_{u\ne v}x_{uv}^2,
&\text{repeated edge, mark on},\\
(\Phi_2x)_v&=\sum_{\{i,j\}\not\ni v}x_{ij}^2,
&\text{repeated edge, mark off},\\
(\Phi_3x)_v&=\sum_{\{a,c\}\subseteq[n]\setminus\{v\}}x_{av}x_{vc},
&\text{adjacent edges, centre},\\
(\Phi_4x)_v&=\sum_{u\ne v}\ \sum_{w\notin\{u,v\}}x_{vu}x_{uw},
&\text{adjacent edges, endpoint},\\
(\Phi_5x)_v&=\sum_{b\ne v}\ \sum_{\{a,c\}\subseteq[n]\setminus\{v,b\}}x_{ab}x_{bc},
&\text{adjacent edges, outside},\\
(\Phi_6x)_v&=\sum_{u\ne v}\ \sum_{\{a,b\}\subseteq[n]\setminus\{v,u\}}x_{vu}x_{ab},
&\text{disjoint edges, active},\\
(\Phi_7x)_v&=\sum_{\substack{\{E_1,E_2\}\\E_1\cap E_2=\varnothing\\v\notin E_1\cup E_2}}x_{E_1}x_{E_2},
&\text{disjoint edges, outside}.
\end{align*}
In the last sum, $E_1$ and $E_2$ are two-element subsets of $[n]$ and their pair is unordered. Each map is an orbit sum of one marked type. Their monomial supports are disjoint, so the maps are independent and the dimension count shows that they form a basis.

The basis also shows the geometry directly. $\Phi_1$ is a local square statistic at $v$, while $\Phi_3$ and $\Phi_4$ are quadratic contractions along length-two paths through or ending at $v$. The other four use configurations that are not local path contractions at $v$.

\FloatBarrier

\subsection{Evaluation of the seven quadratic operations}\label{sec:quadratic-evaluation}
For implementation, the seven orbit sums can be evaluated without enumerating triples, quadruples or five-tuples. Write $X_{uv}=x_{uv}$ for $u\ne v$ and $X_{vv}=0$, and define
$$
h=X\one,\qquad q_v=\sum_{u\ne v}x_{uv}^2,
\qquad S=\sum_{u<v}x_{uv},\qquad T=\sum_{u<v}x_{uv}^2.
$$
Let
$$
W=\frac12\sum_v(h_v^2-q_v),\qquad
D_\mathrm{disj}=\frac12(S^2-T)-W.
$$
These are the total adjacent-pair and disjoint-pair products, respectively. All quantities refer to scalar, real-valued edge features.

\begin{proposition}\label{prop:quadratic-fast}
The seven basis maps of \cref{sec:seven} satisfy
\begin{align*}
(\Phi_1x)_v&=q_v,&
(\Phi_2x)_v&=T-q_v,\\
(\Phi_3x)_v&=\tfrac12(h_v^2-q_v),&
(\Phi_4x)_v&=(Xh)_v-q_v,\\
(\Phi_5x)_v&=W-(\Phi_3x)_v-(\Phi_4x)_v,\\
(\Phi_6x)_v&=S h_v-h_v^2-(Xh)_v+q_v,&
(\Phi_7x)_v&=D_\mathrm{disj}-(\Phi_6x)_v.
\end{align*}
For $n\ge5$, every homogeneous quadratic equivariant map $V_{n,2}\to V_{n,1}$ is therefore a unique linear combination of these seven maps and can be evaluated in $O(n^2)$ arithmetic operations. With $m$ nonzero edge features supplied sparsely, the cost is $O(n+m)$.
\end{proposition}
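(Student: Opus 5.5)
The plan is to verify each of the seven identities by expanding the elementary statistics $h_v$, $q_v$, $(Xh)_v$, $S$, $T$ and $W$ into sums of monomials. Each such sum is then split according to the marked type of its terms. The first three identities are direct. By definition, $q_v=\sum_{u\ne v}x_{uv}^2$ is $\Phi_1$, and $T-q_v$ removes from all squared edges exactly those containing $v$, which gives $\Phi_2$. Expanding $h_v^2=\sum_{a,c\ne v}x_{av}x_{vc}$, the diagonal terms $a=c$ contribute $q_v$. Each unordered pair $\{a,c\}$ of distinct vertices appears twice, which gives $\Phi_3=\tfrac12(h_v^2-q_v)$.

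For $\Phi_4$, I will write $(Xh)_v=\sum_{u\ne v}x_{vu}\sum_{w\ne u}x_{uw}$. The inner term with $w=v$ contributes $x_{vu}^2$, and these sum to $q_v$ over $u$. The remaining terms, with $w\notin\{u,v\}$, are exactly the endpoint-marked adjacent pairs, each occurring once since the centre $u$ determines the ordering. Next, $W=\tfrac12\sum_v(h_v^2-q_v)$ counts every adjacent pair once, namely at its centre. Splitting all adjacent pairs by the position of $v$ (centre, endpoint, or outside) gives $\Phi_5=W-\Phi_3-\Phi_4$. Similarly, $\tfrac12(S^2-T)$ is the sum over all unordered pairs of distinct edges. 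Subtracting the adjacent pairs leaves the disjoint pairs $D_{\mathrm{disj}}$, and splitting these by whether $v$ is active gives $\Phi_7=D_{\mathrm{disj}}-\Phi_6$.

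The step needing most care is $\Phi_6$. Here $Sh_v=\sum_{u\ne v}x_{vu}\sum_{E}x_E$ ranges over all edges $E$, and I will classify the pair $(vu,E)$ into three cases:
\begin{enumerate}
\item $E=vu$, which gives $q_v$;
\item $E$ shares a vertex with $vu$;
\item $E$ is disjoint from $vu$, which is the $\Phi_6$ term.
\end{enumerate}
Case (b) splits further. If $E=vw$ with $w\ne u$, the terms sum to $h_v^2-q_v$. If $E=uw$ with $w\ne v$, the terms sum to $(Xh)_v-q_v$. Subtracting gives $\Phi_6=Sh_v-q_v-(h_v^2-q_v)-((Xh)_v-q_v)$, which equals the stated formula. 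I will double-check that no term of $Sh_v$ is counted twice, since cases (a), (b) and (c) are mutually exclusive.

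Uniqueness of the expansion for $n\ge5$ follows from \cref{thm:marked-basis} and the count of seven types in \cref{sec:seven}. For complexity, forming $h$, $q$, $S$ and $T$ costs $O(n^2)$, or $O(n+m)$ when sparse, and a single matrix–vector product $Xh$ has the same cost. The scalars $W$ and $D_{\mathrm{disj}}$ then take $O(n)$, and each output coordinate takes $O(1)$ more.
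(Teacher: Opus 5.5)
Your proposal is correct and follows essentially the paper's own argument: the same expansions of $h_v^2$, $(Xh)_v$, $W$ and $\tfrac12(S^2-T)$ split by marked type, with uniqueness taken from \cref{thm:marked-basis} and the seven-type count, and the same cost accounting. For $\Phi_6$, the paper packages your three-case split of $Sh_v$ as the inclusion--exclusion identity that the edges disjoint from $\{v,u\}$ have total feature $S-h_v-h_u+x_{vu}$, then multiplies by $x_{vu}$ and sums over $u$; this is the same computation.
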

\begin{proof}
Expanding $h_v^2$ separates repeated edges from unordered pairs of distinct edges incident to $v$, giving $\Phi_3$. Expanding $(Xh)_v$ separates the returning path $v,u,v$, which contributes $q_v$, from paths with three distinct vertices, giving $\Phi_4$. Every adjacent edge pair places $v$ at its centre, at an endpoint, or outside. Subtracting the first two contributions from $W$ gives $\Phi_5$.

For a fixed edge $\{v,u\}$, the sum of features on edges disjoint from it is $S-h_v-h_u+x_{vu}$. Multiplying by $x_{vu}$ and summing over $u$ gives $\Phi_6$. The total product over unordered distinct edge pairs is $(S^2-T)/2$. Subtract adjacent pairs to obtain $D_\mathrm{disj}$, then subtract those containing $v$ to obtain $\Phi_7$. The first two identities are immediate from the definitions. Only edge sums, the two matrix--vector products $X\one$ and $Xh$, and vertexwise arithmetic are required. Completeness follows from \cref{thm:marked-basis}.
\end{proof}

For dense inputs, the $O(n^2)$ cost is of the same order as reading the $\binom n2$ edge coordinates; for sparse inputs, the $O(n+m)$ bound is linear in the stored input size up to the vertex term.

An explicit quadratic layer is
$$
F_\theta(x)=\sum_{a=1}^7\theta_a\Phi_a(x).
$$
Together with the equivariant linear terms and constant output, this
spans all equivariant polynomial maps of degree at most two. The seven-dimensional statement concerns the quadratic polynomial space on the
full real feature space. The quadratic products in $\Phi_a$ are explicit
operations of the architecture, so this is not a claim that finite ReLU
networks can exactly realise quadratic functions on
$\R^{\binom n2}$. If inputs are restricted to Boolean adjacency matrices,
additional identities such as $x_{uv}^2=x_{uv}$ may introduce dependencies
between polynomial expressions.

\vspace{0.7cm}
\section{Scope and future directions}\label{sec:limitations}

\subsection*{Limitations}
It remains open to characterise which sequences of chaos components come from equivariant piecewise-linear maps; see \cref{cor:pl-chaos}. The transform determines each such map uniquely, but equivariance and square-summability
of the components do not ensure that the resulting map is piecewise linear. A complete structural picture is therefore still missing. Matching finitely many components with shifted ReLUs also leaves the higher-degree tail uncontrolled, so it gives no approximation bound on its own. For compact connected groups, polynomial existence does not suffice to
detect piecewise-linear existence; the additional restrictions are discussed in the introduction.

At the architectural level, the Gram criterion describes the space available in each degree for fixed lifts and readouts. A characterisation of networks with prescribed width and depth, allowing these maps to be trained, remains open here. It must account for shared parameters across degrees and for interactions created by composition and joint processing of source components.

The general spanning construction can also be costly: each regular hidden block has $|G|$ coordinates. For subset features, the multihypergraph orbit modules can still be reducible, so a full irreducible decomposition requires further work. Restricting the polynomial bases to Boolean inputs can introduce additional relations. Finally, the expressivity results do not establish guarantees about training or generalisation.

\subsection*{Constructing architectures}
The main practical task is to turn these constructions into trainable networks with prescribed degree spaces. Choose source and target modules
$L,K$, a degree budget $D$, and equivariant lifts $A_h:L\to H_h$ and readouts $R_h:H_h\to K$,
where each $H_h$ is a transitive permutation module. Use the Gram test in
\cref{prop:accessible} to select coordinate polynomials
$R_h\bigl((A_hx)^{\odot k}\bigr)$ spanning the intended spaces, and choose activations
with nonzero coefficients at the required degrees and scales. Independent
branch weights then parametrise the computed span in each degree. For
separate degree control, \cref{sec:architecture} provides coordinatewise
powers for homogeneous polynomial layers, normalised Hermite activations
for pure chaos layers, and shifted-ReLU combinations for matching
coefficients through $D$. The polynomial constructions need not be
piecewise linear, while finite ReLU coefficient matching leaves an
uncontrolled higher-degree tail.

\subsection*{Graph networks}
The companion paper \cite{CompanionPaper} applies this theory to predicting
graph diameter and algebraic connectivity. The reported gains over a parameter-matched control with
full linear $S_n$-equivariant mixing concern one synthetic benchmark with a frozen GIN backbone. The subset representations studied here place this construction within a broader family of vertex, edge and higher-order features. In each degree, the marked-orbit basis gives all polynomial maps between these spaces, guiding layer construction and tests of which
directions a proposed layer spans (\cref{thm:marked-basis}). Its stable indexing allows coefficients to be shared across graph orders (\cref{prop:sharp}).

A further direction is to adapt the GNN architecture to classify $d$-manifolds from their triangulations. Equivariance under vertex relabelling alone is insufficient: subdivisions and retriangulations,
such as Pachner moves, can change both the vertex set and the incidence structure. Extending the theory to this setting requires specifying how features are transferred between triangulations and how network layers respect these transfers, with the aim of making the final prediction invariant under the chosen changes of triangulation.

\appendix

\section{Supplementary proofs}\label{app:proofs}

This appendix supplies the polarisation and character-theoretic arguments used in the main text.

\subsection{Proof of \texorpdfstring{\cref{prop:poly-sym}}{Proposition (polynomials and symmetric powers)}}\label{app:poly-sym}
\begin{proof}
Polarisation identifies a homogeneous polynomial map $P$ of degree $k$ with a symmetric $k$-linear map $\widetilde P$ satisfying $P(x)=\widetilde P(x,\dots,x)$, and the universal property of $\Sym^kL$ converts $\widetilde P$ into a unique linear $T_P:\Sym^kL\to K$ with $T_P(x^k)=P(x)$. The two constructions are mutually inverse and natural in $L$ and $K$, so $P(gx)=gP(x)$ for all $g$ and $x$ if and only if $T_P$ commutes with the $G$-action.
\end{proof}

\subsection{Proof of \texorpdfstring{\cref{prop:molien}}{Proposition (Molien series)}}\label{app:molien}
\begin{proof}
Diagonalising $\rho_L(g)$ over $\C$ gives $\sum_k\chi_{\Sym^kL}(g)t^k=\det(I-t\rho_L(g))^{-1}$, and taking logarithms,
$$
\log\det\bigl(I-t\rho_L(g)\bigr)^{-1}=\sum_{m\ge1}\frac{t^m}m\tr\rho_L(g^m)=\sum_{m\ge1}\frac{t^m}m\chi_L(g^m).
$$
Real characters are self-conjugate, so $c_k=\langle\chi_{\Sym^kL},\chi_K\rangle$, and averaging over $G$ gives both displayed forms.
\end{proof}

\subsection{Proof of \texorpdfstring{\cref{thm:hypergraph-decomposition}}{Theorem (hypergraph decomposition)}}\label{app:hypergraph-decomposition}
\begin{proof}
A basis of $\Sym^dV_{n,r}$ is given by the monomials
$$
e^m=\prod_{A\in X_{n,r}}e_A^{m(A)},
\qquad
m:X_{n,r}\to\N_0,
\qquad
\sum_A m(A)=d.
$$
The action of $S_n$ is
$$
g\cdot e^m=e^{gm},
\qquad
(gm)(A)=m(g^{-1}A).
$$
Thus each monomial may be identified with an $r$-uniform
multihypergraph on $[n]$ having $m(A)$ copies of the edge $A$.

A permutation $g$ maps the active core of $m$ isomorphically onto that of $gm$. Conversely, any isomorphism of active
cores extends arbitrarily to a permutation of their complements in
$[n]$. Hence two monomials lie in the same $S_n$-orbit if and only if their active cores are isomorphic, and
$$
\Sym^dV_{n,r}
=
\bigoplus_{\substack{[\mathcal H]\in\mathfrak H_{r,d}\\
v(\mathcal H)\le n}}
P_{\mathcal H}(n).
$$

Fix a monomial with active core $\mathcal H$, active vertex set $U$,
and $v=|U|$. Its stabiliser consists precisely of automorphisms of the
core together with arbitrary permutations of the inactive vertices.
Therefore $\operatorname{Stab}_{S_n}(m) \cong \Aut(\mathcal H)\times S_{n-v}$, and the corresponding orbit module is the associated permutation
representation, $P_{\mathcal H}(n) \cong \Ind_{\Aut(\mathcal H)\times S_{n-v}}^{S_n}\one$.

Since $\mathcal H$ has $d$ edges, each of size $r$, $v(\mathcal H)\le rd$. Thus every core type occurs once $n\ge rd$, so the indexing set is
independent of $n$ in this range.

Finally, for $\lambda\vdash n$, absolute irreducibility of the real
Specht modules and Frobenius reciprocity give
$$
\begin{aligned}
[P_{\mathcal H}(n):S^\lambda]
&=
\dim\Hom_{S_n}(P_{\mathcal H}(n),S^\lambda)\\
&=
\dim\Hom_{\Aut(\mathcal H)\times S_{n-v}}
\left(\one,\operatorname{Res}S^\lambda\right)\\
&=
\dim(S^\lambda)^{\Aut(\mathcal H)\times S_{n-v}}.
\end{aligned}
$$
This proves the result.
\end{proof}

\section{Hidden spaces with several coordinate orbits}\label{app:intransitive}

The common scalar in \cref{thm:master} requires equal coordinate variances. With several coordinate orbits there is still an exact formula, but the scalar becomes a diagonal matrix.

Let $A:L\to\R^{\cB}$ and $R:\R^{\cB}\to K$ be equivariant linear maps for an arbitrary permutation basis $\cB$. Put $C=AA^*$ and $s_b=\sqrt{C_{bb}}$. Assume $\sigma(s_b\,\cdot)\in L^2(\gamma_\R)$ whenever $s_b>0$. Let $D_k$ be diagonal with
$$
(D_k)_{bb}=\begin{cases}
a_k(\sigma;s_b)/s_b^k,&s_b>0,\\
\sigma(0),&s_b=0,\ k=0,\\
0,&s_b=0,\ k>0.
\end{cases}
$$
Each $s_b$, and hence each diagonal entry, is constant on a coordinate orbit. Therefore $D_k$ is equivariant.

\begin{proposition}\label{prop:intransitive}
For $F(x)=R\sigma_{\cB}(Ax)$,
$$
\Pi_kF=\Pi_k\bigl[RD_k\bigl((Ax)^{\odot k}\bigr)\bigr],
\qquad
\norm{\Pi_kF}_{L^2}^2=k!\,\tr(RD_kC^{\circ k}D_kR^*).
$$
\end{proposition}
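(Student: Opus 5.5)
The plan is to follow the proof of \cref{thm:master} coordinate by coordinate. The only change is that the scalar now depends on the coordinate $b$. Since $F(x)=\sum_{b\in\cB}\sigma(\langle x,a_b\rangle)Rb$ is a finite sum and $\Pi_k$ is linear, it suffices to prove, for each fixed $b$, the scalar identity
$$
\Pi_k\bigl[\sigma(\langle x,a_b\rangle)\bigr]=(D_k)_{bb}\,\Pi_k\bigl(\langle x,a_b\rangle^k\bigr)
\quad\text{in } L^2(\gamma_L).
$$
Multiplying by $Rb$ and summing over $b$ then gives
$$
\Pi_kF=\Pi_k\Bigl[\sum_b (D_k)_{bb}\langle x,a_b\rangle^k Rb\Bigr]=\Pi_k\bigl[RD_k((Ax)^{\odot k})\bigr],
$$
because $D_k$ is diagonal in $\cB$.

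For the scalar identity I split into two cases.

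\emph{Case $s_b>0$.} The variable $\langle X,a_b\rangle/s_b$ is standard normal. Expanding $\sigma(s_b z)$ in Hermite polynomials gives $\sigma(\langle x,a_b\rangle)=\sum_j a_j(\sigma;s_b)\He_j(\langle x,a_b\rangle/s_b)$. The $j$th term lies in $\cH_j(L)$, and $\Pi_k(\langle x,a_b\rangle^k)=s_b^k\He_k(\langle x,a_b\rangle/s_b)$, exactly as in the proof of \cref{thm:master}.

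\emph{Case $s_b=0$.} Here $a_b=0$, so the coordinate function is the constant $\sigma(0)$. This lies in $\cH_0$, and $\langle x,a_b\rangle^k$ equals $1$ for $k=0$ and $0$ for $k>0$. This matches the definition of $(D_k)_{bb}$; the convention $0^0=1$ is what makes the $k=0$ case consistent.

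For the norm, I write $\Pi_kF=\sum_b (D_k)_{bb}\,\Pi_k(\langle x,a_b\rangle^k)\,Rb$ and expand the $L^2$ norm as a double sum. I need
$$
\bigl\langle \Pi_k(\langle x,a_b\rangle^k),\Pi_k(\langle x,a_{b'}\rangle^k)\bigr\rangle_{L^2}=k!\,C_{bb'}^k.
$$
When both $s_b,s_{b'}>0$, this follows from \cref{lem:mehler} with correlation $C_{bb'}/(s_bs_{b'})$, after multiplying by $s_b^ks_{b'}^k$. When either scale vanishes, both sides vanish for $k>0$, and for $k=0$ both equal $1=C_{bb'}^0$.

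Putting this together gives
$$
\norm{\Pi_kF}^2=k!\sum_{b,b'}(D_k)_{bb}C_{bb'}^k(D_k)_{b'b'}\langle Rb,Rb'\rangle=k!\,\tr(RD_kC^{\circ k}D_kR^*).
$$

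I do not expect a real obstacle. The only delicate points are the degenerate coordinates with $s_b=0$ and the $k=0$ convention. It is also worth remarking, though it is not needed for the identities themselves, that equivariance of $D_k$ keeps $RD_k((Ax)^{\odot k})$ in $\Poly_k(L,K)^G$.
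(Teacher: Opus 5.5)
Your proposal is correct and follows essentially the same route as the paper's proof. Like the paper, you expand each hidden coordinate at its own scale $s_b$, treat the coordinates with $s_b=0$ as constants that contribute only in degree zero, and obtain the cross inner products $k!\,C_{bb'}^k$ from Mehler's identity before summing against $\langle Rb,Rb'\rangle$; you also check the mixed terms where exactly one of $s_b,s_{b'}$ vanishes, which the paper leaves implicit.
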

\begin{proof}
Expand each coordinate using its own variance. If $s_b=0$, then $A^*b=0$ and that coordinate is the constant $\sigma(0)$, contributing only to degree zero. For positive variances, the identity from \cref{sec:chaos} gives $\Pi_k\ip{x}{A^*b}^k=s_b^k\He_k(\ip{x}{A^*b}/s_b)$. By \cref{lem:mehler}, the corresponding cross inner product is $k!C_{bb'}^k$, including degree zero with $C^{\circ0}=J$. Summing gives both assertions.
\end{proof}

Different orbit contributions can cancel after the readout. Thus separate nonzero tests for the orbits do not replace this combined formula. In particular, several feature channels in a hidden space do not require a new theory, but do require retaining their possibly different variances and cross terms.

\section{Representation formulae for subset modules}\label{app:subset-representations}

The Specht modules $S^\lambda$, $\lambda\vdash n$, are the irreducible real representations of $S_n$. They are absolutely irreducible, so character inner products below are ordinary multiplicities. For the two-row formula assume $0\le r\le n/2$ and recall that complementation gives $V_{n,r}\cong V_{n,n-r}$ for the remaining nonzero cases.

\subsection{The subset module and its Johnson scheme}
We finish with formulae specific to subset modules. They support the multiplicity-free and computational statements used in the main text.

\begin{proposition}\label{prop:two-row}
$\Vnr\cong\Ind_{S_r\times S_{n-r}}^{S_n}\one$, and for $r\le n/2$
$$
\Vnr\cong\bigoplus_{j=0}^rS^{(n-j,j)} .
$$
In particular $\Vnr$ is multiplicity-free, and $\chi^{(n-j,j)}(g)=\abs{X_{n,j}^{\,g}}-\abs{X_{n,j-1}^{\,g}}$ for $0\le j\le n/2$, with $X_{n,-1}=\varnothing$.
\end{proposition}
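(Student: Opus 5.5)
The first isomorphism comes from the orbit–stabiliser principle. $S_n$ acts transitively on $X_{n,r}$, and the stabiliser of $[r]$ is $S_r\times S_{n-r}$. Hence $V_{n,r}=\R[X_{n,r}]\cong\R[S_n/(S_r\times S_{n-r})]=\Ind_{S_r\times S_{n-r}}^{S_n}\one$, exactly as in the proof of \cref{prop:sym-mset}.

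For the decomposition I would induct on $r$ and use Young's rule. Young's rule says $\Ind_{S_r\times S_{n-r}}^{S_n}\one\cong\bigoplus_\mu K_{\mu,(n-r,r)}S^\mu$. For a two-row weight $(n-r,r)$ with $r\le n/2$, the Kostka number $K_{\mu,(n-r,r)}$ is $1$ when $\mu=(n-j,j)$ with $0\le j\le r$, and $0$ otherwise. This holds because a semistandard tableau of content $(n-r,r)$ must put all the $1$s in the first row, and the $2$s are then forced: $r-j$ of them go in row one and $j$ in row two. Thus the shape determines the tableau, and any shape with more than two rows admits no semistandard tableau of this content.

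If I wanted to avoid quoting Young's rule, I would run the same induction by hand. The inclusion map $V_{n,r-1}\to V_{n,r}$, $e_B\mapsto\sum_{A\supset B}e_A$, is injective for $r\le n/2$; I would check this by the standard Gram/rank argument for the inclusion matrix. Its image is therefore $\bigoplus_{j<r}S^{(n-j,j)}$ by induction, and the orthogonal complement is a new irreducible that can be identified as $S^{(n-r,r)}$. Multiplicity-freeness then follows because the shapes $(n-j,j)$, $0\le j\le r$, are distinct partitions, and Specht modules are pairwise non-isomorphic and absolutely irreducible.

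The character formula follows by telescoping. Taking characters of the decomposition for $j$ and for $j-1$ (both with $j\le n/2$) gives $\chi_{V_{n,j}}-\chi_{V_{n,j-1}}=\chi^{(n-j,j)}$. The character of a permutation module is the fixed-point count, $\chi_{V_{n,j}}(g)=|X_{n,j}^{\,g}|$, and the convention $X_{n,-1}=\varnothing$ covers $j=0$.

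The main obstacle is justifying the Kostka computation, or in the hand-made route, the injectivity of the inclusion map together with identifying the new summand as $S^{(n-r,r)}$. I would rely on Young's rule as standard, and cite James's treatment of Specht modules for it.
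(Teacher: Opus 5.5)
Your proposal is correct and follows the paper's proof. It uses orbit--stabiliser for the induced-module description, the standard multiplicity-free two-row decomposition, and subtraction of the fixed-point characters of $V_{n,j}$ and $V_{n,j-1}$. The only difference is that the paper simply cites the decomposition as standard (James, Sagan), whereas you also justify it with Young's rule and a correct Kostka-number count for content $(n-r,r)$.
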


\begin{proof}
The action of $S_n$ on $r$-subsets is transitive, with stabiliser $S_r\times S_{n-r}$, so $\Vnr\cong \Ind_{S_r\times S_{n-r}}^{S_n}\one$. The decomposition of this Young permutation module is multiplicity-free and standard. See \cite{James1978,Sagan2001}.

Finally, since $\chi_{V_{n,j}}(g)=|X_{n,j}^{\,g}|$
and $V_{n,j}\cong\bigoplus_{i=0}^j S^{(n-i,i)}$, subtracting the decompositions for $j$ and $j-1$ gives
$$
\chi^{(n-j,j)}(g)
=
|X_{n,j}^{\,g}|-|X_{n,j-1}^{\,g}|.
$$
\end{proof}

Thus $\Vnr$ is transitive and multiplicity-free, and \cref{sec:krein} applies without modification. Here the orbit algebra is the Bose--Mesner algebra of the Johnson scheme $J(n,r)$. Its orbit indicator matrices are indexed by the intersection number $\abs{A\cap A'}$, and the primitive idempotents $E_i$ have entries depending only on that number, given by the dual eigenvalues ($Q$-numbers) of the scheme in the standard normalisation \cite[Section~9.1]{BCN1989}. The Hadamard coefficients of \cref{thm:krein} are therefore obtained from the Johnson Krein parameters in that normalisation.

\subsection{A character formula}\label{sec:subset-character}

\Cref{thm:hypergraph-decomposition} describes the orbit geometry. For dimension and multiplicity calculations, P\'olya theory gives a complementary formula that avoids enumerating hypergraphs up to isomorphism.

\begin{theorem}[Molien--P\'olya formula]\label{thm:molien-polya}
For $g\in S_n$ let
$$
f_r(g):=\abs{\Xnr^{\,g}}=[z^r]\prod_{c\in\Cyc(g)}(1+z^{\abs c}),
$$
the product being over cycles of $g$ on $[n]$, and define $h_d(g)$ by
$$
\sum_{d\ge0}h_d(g)t^d=\exp\Bigl(\sum_{m\ge1}\frac{t^m}{m}f_r(g^m)\Bigr),
\qquad\text{equivalently}\qquad
d\,h_d(g)=\sum_{m=1}^df_r(g^m)h_{d-m}(g).
$$
Then for every $\lambda\vdash n$ and $0\le s\le n$,
\begin{align*}
[\Sym^d\Vnr:S^\lambda]&=\frac1{n!}\sum_{g\in S_n}h_d(g)\chi^\lambda(g),\\
\dim\Hom_{S_n}(\Sym^d\Vnr,\Vns)&=\frac1{n!}\sum_{g\in S_n}h_d(g)f_s(g).
\end{align*}
Both sums depend on $g$ only through its cycle type.
\end{theorem}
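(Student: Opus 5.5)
The plan is to combine the general Molien series of \cref{prop:molien} with the standard character theory of permutation representations, specialised to $L=\Vnr$. I will carry it out in four steps.

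First, I identify the ingredients. $\Vnr=\R[\Xnr]$ is a permutation representation, so its character at $g$ is the number of fixed $r$-subsets, $\chi_{\Vnr}(g)=\abs{\Xnr^{\,g}}$. A subset $A$ is fixed by $g$ exactly when it is a union of cycles of $g$. Choosing, for each cycle $c\in\Cyc(g)$, whether to include it contributes a factor $(1+z^{\abs c})$. Extracting the coefficient of $z^r$ then gives the stated product formula for $f_r(g)$. The same formula applied with $g^m$ in place of $g$ gives $\chi_{\Vnr}(g^m)=f_r(g^m)$.

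Second, I compute the graded character of the symmetric algebra. By the identity used in the proof of \cref{prop:molien}, diagonalising $\rho(g)$ gives $\sum_d\chi_{\Sym^d\Vnr}(g)t^d=\det(I-t\rho(g))^{-1}=\exp\bigl(\sum_m t^m\chi_{\Vnr}(g^m)/m\bigr)$. Hence $h_d(g)=\chi_{\Sym^d\Vnr}(g)$. The recursion $d\,h_d=\sum_{m=1}^d f_r(g^m)h_{d-m}$ follows by differentiating the generating function $E(t)=\exp(S(t))$ in $t$. This gives $E'=S'E$, and comparing coefficients of $t^{d-1}$ yields the recursion. Alternatively, \cref{prop:sym-mset} gives $h_d(g)=\abs{\MSet_d(\Xnr)^g}$ directly, which serves as a consistency check.

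Third, I take inner products. Specht modules are absolutely irreducible, and all characters of $S_n$ are real. Therefore $[\Sym^d\Vnr:S^\lambda]=\langle h_d,\chi^\lambda\rangle=\frac1{n!}\sum_g h_d(g)\chi^\lambda(g)$. For the second formula, \cref{prop:molien} with $K=\Vns$ gives $c_d=\frac1{n!}\sum_g h_d(g)\chi_{\Vns}(g)$, and the first step identifies $\chi_{\Vns}(g)=f_s(g)$.

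Finally, for class invariance, I note that $f_r(g)$ depends only on the multiset of cycle lengths. The cycle type of $g^m$ is determined by that of $g$: a $\ell$-cycle splits into $\gcd(\ell,m)$ cycles of length $\ell/\gcd(\ell,m)$. Hence $h_d$ and $f_s$ are class functions, as is $\chi^\lambda$. No step is a serious obstacle. The only point requiring care is the identification of $\chi_{\Vnr}(g^m)$ through the cycle structure of $g^m$, together with noting that real-valuedness removes any complex conjugation in the inner product.
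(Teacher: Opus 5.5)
Your proposal is correct and follows essentially the same route as the paper. You identify $f_r$ via unions of cycles, show $h_d(g)=\chi_{\Sym^d\Vnr}(g)$ (the Molien identity of \cref{prop:molien} for the permutation module), and take character inner products using absolute irreducibility of the Specht modules. The only differences are minor: the paper derives the generating function by counting $g$-invariant multisets, giving a product of $(1-t^{\abs c})^{-1}$ over cycles of $g$ on $\Xnr$, rather than from the determinant; and you also verify the recursion and the cycle-type dependence, which the paper leaves implicit.
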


\begin{proof}
A subset is fixed by $g$ if and only if it is a union of cycles of $g$, which gives the generating-function expression for $f_r$.

By \cref{prop:sym-mset}, $\chi_{\Sym^d\Vnr}(g)$ is the number of $g$-invariant multisets of size $d$ on $\Xnr$, that is, of multiplicity functions constant on the cycles of $g$ acting on $\Xnr$. Hence
$$
\sum_{d\ge0}\chi_{\Sym^d\Vnr}(g)t^d=\prod_{c\in\Cyc(g\curvearrowright\Xnr)}\bigl(1-t^{\abs c}\bigr)^{-1},
$$
and taking logarithms gives $\sum_{m\ge1}\frac{t^m}m\abs{\Xnr^{g^m}}$, since a point of $\Xnr$ is fixed by $g^m$ exactly when its cycle length divides $m$. As $\abs{\Xnr^{g^m}}=f_r(g^m)$, this shows $\chi_{\Sym^d\Vnr}(g)=h_d(g)$, which is \cref{prop:molien} for the permutation representation $\Vnr$. The second formula is the first with $\chi_{\Vns}=f_s$.
\end{proof}

The recursion for $h_d$ gives a direct computational procedure.

\subsection{Support and stability}\label{sec:subset-stability}
This supplementary subsection records how the irreducible support behaves as $n$ grows; it is not needed for the coordinatewise layer calculations above.

\begin{theorem}\label{thm:stability}
    \begin{enumerate}
    \item If $S^\lambda$ occurs in $P_\cH(n)$ then $\lambda_1\ge n-v(\cH)$. Hence every constituent of $\Sym^d\Vnr$ satisfies $n-\lambda_1\le rd$.
    \item Fix $r,d$ and a partition $\nu\vdash s$, and put $\lambda[n]=(n-s,\nu)$. For $n$ such that $\lambda[n]$ is a partition, $m_\nu(n):=[\Sym^d\Vnr:S^{\lambda[n]}]$ vanishes for all $n$ if $s>rd$, and is independent of $n$ for $n\ge s+rd$ if $s\le rd$.
    \end{enumerate}
\end{theorem}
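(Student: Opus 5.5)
For part (a), I would use the orbit-module description from \cref{thm:hypergraph-decomposition}: $P_\cH(n)\cong\Ind_{\Aut(\cH)\times S_{n-v}}^{S_n}\one$ with $v=v(\cH)$. Because $\Aut(\cH)\times S_{n-v}\subseteq S_v\times S_{n-v}$, transitivity of induction gives
\[
P_\cH(n)\hookrightarrow\Ind_{S_v\times S_{n-v}}^{S_n}\Ind_{\Aut(\cH)}^{S_v}\one .
\]
It is therefore enough to show that every constituent $S^\lambda$ of $\Ind_{S_v\times S_{n-v}}^{S_n}(U\boxtimes\one)$, with $U$ any $S_v$-module, has $\lambda_1\ge n-v$. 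By the Littlewood--Richardson rule in its Pieri form, inducing $S^\mu\boxtimes S^{(n-v)}$ adds a horizontal strip of $n-v$ boxes to $\mu\vdash v$. No two of those boxes share a column, so the resulting diagram has at least $n-v$ columns, that is, $\lambda_1\ge n-v$.

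An alternative that avoids Pieri is to take the multiplicity formula $[P_\cH(n):S^\lambda]=\dim(S^\lambda)^{\Aut(\cH)\times S_{n-v}}$, observe that $(S^\lambda)^{S_{n-v}}\ne0$ forces $S^{(n-v)}$ to appear in $\operatorname{Res}_{S_{n-v}}S^\lambda$, and then use the branching rule to obtain $\lambda\supseteq(n-v)$. Either way, the second sentence of (a) follows from the bound $v(\cH)\le rd$.

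For part (b), the vanishing statement comes straight from (a). With $\lambda[n]=(n-s,\nu)$ we have $n-\lambda_1=s$, so $s>rd$ contradicts $n-\lambda_1\le rd$.

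When $s\le rd$, I would add the multiplicities over the summands of \cref{thm:hypergraph-decomposition}. For $n\ge rd$ the indexing set $\mathfrak H_{r,d}$ stays fixed, so it suffices to show that for each core $\cH$ the quantity
\[
[\Ind_{\Aut(\cH)\times S_{n-v}}^{S_n}\one:S^{(n-s,\nu)}]
\]
is independent of $n$ once $n\ge s+rd$. Frobenius reciprocity together with the Pieri rule writes this as
\[
\sum_{\mu\vdash v}[\Ind_{\Aut(\cH)}^{S_v}\one:S^\mu]\cdot c_\mu(n),
\]
where $c_\mu(n)\in\{0,1\}$ records whether $(n-s,\nu)/\mu$ is a horizontal strip of size $n-v$. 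This is the step I expect to be the main obstacle: checking that $c_\mu(n)$ stabilises. The horizontal-strip condition reads $\mu_1\le n-s$, $\mu_{i+1}\le\nu_i\le\mu_i$ for $i\ge1$, together with $\mu_2\le\nu_1$ and $\nu_1\le\mu_1$.

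Of these, only $\mu_1\le n-s$ depends on $n$. Since $\mu_1\le v\le rd$, it holds automatically as soon as $n\ge s+rd$. The other interlacing conditions involve only $\mu$ and $\nu$. The threshold $n\ge s+rd$ also guarantees that $(n-s,\nu)$ is a partition, because $\nu_1\le s\le rd\le n-s$.

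So in this range each $c_\mu(n)$ is constant, and $m_\nu(n)$ is a fixed finite sum of constants, which proves (b).
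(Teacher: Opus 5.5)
Your proposal is correct and follows the paper's proof. Part (b) is the paper's argument: induction in stages through $S_v\times S_{n-v}$, Pieri's rule, and the observation that only $\mu_1\le n-s$ depends on $n$, with $\mu_1\le v\le rd$. For (a), your ``alternative'' via $(S^\lambda)^{S_{n-v}}\ne0$ and branching is the paper's route, while your primary horizontal-strip argument is the same Pieri computation you use in (b). Your extra check that $\nu_1\le s\le rd\le n-s$ makes $(n-s,\nu)$ a partition throughout the stable range is a small point the paper leaves implicit.
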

\begin{proof}
(a) Since $\Aut(\cH)\times S_{n-v}\supseteq S_{n-v}$ we get $[P_\cH(n):S^\lambda]\le\dim(S^\lambda)^{S_{n-v}}$, and iterated branching gives $(S^\lambda)^{S_m}\ne0$ if and only if $\lambda_1\ge m$.

(b) The first claim is (a). For the second, take $n\ge rd$ so that the type set is fixed. Write $\Ind_{\Aut(\cH)}^{S_v}\one=\bigoplus_\mu c_\mu S^\mu$ and induce in stages; the contribution of $\cH$ to the multiplicity of $S^{\lambda[n]}$ is $\sum_\mu c_\mu\,c^{\lambda[n]}_{\mu,(n-v)}$, and by Pieri's rule the Littlewood--Richardson coefficient $c^{\lambda[n]}_{\mu,(n-v)}$ is $1$ exactly when $\lambda[n]/\mu$ is a horizontal strip, that is when $\lambda[n]_i\ge\mu_i\ge\lambda[n]_{i+1}$ for all $i$, and $0$ otherwise.

Only one of those inequalities involves $n$, namely $n-s\ge\mu_1$. Since $\mu_1\le v\le rd$, it holds for every $\mu$ once $n\ge s+rd$, so the multiplicity is constant from there on.
\end{proof}

In the language of representation stability, the sequence $n\mapsto\Sym^d\Vnr$ is a finitely generated FI-module, which also implies eventual constancy \cite{CEF2015}. The argument above gives an explicit sufficient range for each padded Specht multiplicity. The sharpness statement in \cref{prop:sharp} concerns the full space of maps into $s$-subset features; it does not assert that every individual Specht multiplicity first stabilises at this bound.

\bibliographystyle{alpha}
{\raggedright
\def\bibliofont{\small}
\bibliography{references}
}
\end{document}